\theoremstyle{plain}
\newtheorem{teo}{Theorem}[section]
\newtheorem{coro}[teo]{Corollary}
\newtheorem{lemma}[teo]{Lemma}
\newtheorem{pro}[teo]{Proposition}
\theoremstyle{definition}
\newtheorem{defi}[teo]{Definition}
\theoremstyle{remark}
\newtheorem{rem}[teo]{Remark}
\renewcommand{\d}{\operatorname{d}}
\newcommand{\sgn}{\operatorname{sgn}}
\newcommand{\diag}{\operatorname{diag}}
\newcommand{\C}{\mathbb{C}}
\newcommand{\A}{\mathbb{A}}
\newcommand{\N}{\mathbb{N}}
\newcommand{\R}{\mathbb{R}}
\DeclareRobustCommand{\gaussk}{\DOTSB\gaussk@\slimits@}
\newcommand{\gaussk@}{\mathop{\vphantom{\sum}\mathpalette\bigcal@{K}}}
\newcommand{\bigcal@}[2]{%
	\vcenter{\m@th
		\sbox\z@{$#1\sum$}%
		\dimen@=\dimexpr\ht\z@+\dp\z@
		\hbox{\resizebox{!}{0.8\dimen@}{$\mathcal{K}$}}%
	}%
}
\newcommand{\cfracplus}{\mathbin{\cfracplus@}}
\newcommand{\cfracplus@}{%
	\sbox\z@{$\dfrac{1}{1}$}%
	\sbox\tw@{$+$}%
	\raisebox{\dimexpr\dp\tw@-\dp\z@\relax}{$+$}%
}
\newcommand{\cfracdots}{\mathord{\cfracdots@}}
\newcommand{\cfracdots@}{%
	\sbox\z@{$\dfrac{1}{1}$}%
	\sbox\tw@{$+$}%
	\raisebox{\dimexpr\dp\tw@-\dp\z@\relax}{$\cdots$}%
}
\newcommand*{\relrelbarsep}{.386ex}
\newcommand*{\relrelbar}{%
	\mathrel{%
		\mathpalette\@relrelbar\relrelbarsep
	}%
}
\newcommand*{\@relrelbar}[2]{%
	\raise#2\hbox to 0pt{$\m@th#1\relbar$\hss}%
	\lower#2\hbox{$\m@th#1\relbar$}%
}
\providecommand*{\rightrightarrowsfill@}{%
	\arrowfill@\relrelbar\relrelbar\rightrightarrows
}
\providecommand*{\leftleftarrowsfill@}{%
	\arrowfill@\leftleftarrows\relrelbar\relrelbar
}
\providecommand*{\xrightrightarrows}[2][]{%
	\ext@arrow 0359\rightrightarrowsfill@{#1}{#2}%
}
\providecommand*{\xleftleftarrows}[2][]{%
	\ext@arrow 3095\leftleftarrowsfill@{#1}{#2}%
}
\begin{document}
	
	\title[Oscillation and mixed multiple orthogonality]{Spectral theory for bounded banded matrices\\ with positive bidiagonal factorization and \\mixed multiple orthogonal polynomials}

	\author[A Branquinho]{Amílcar Branquinho$^{1}$}
	\address{$^1$CMUC, Departamento de Matemática,
		Universidade de Coimbra, 3001-454 Coimbra, Portugal}
	\email{$^1$ajplb@mat.uc.pt}
	
	\author[A Foulquié]{Ana Foulquié-Moreno$^{2}$}
	\address{$^2$CIDMA, Departamento de Matemática, Universidade de Aveiro, 3810-193 Aveiro, Portugal}
	\email{$^2$foulquie@ua.pt}
	
	\author[M Mañas]{Manuel Mañas$^{3}$}
	\address{$^3$Departamento de Física Teórica, Universidad Complutense de Madrid, Plaza Ciencias 1, 28040-Madrid, Spain \&
		Instituto de Ciencias Matematicas (ICMAT), Campus de Cantoblanco UAM, 28049-Madrid, Spain}
	\email{$^3$manuel.manas@ucm.es}

	\keywords{Bounded banded matrices, oscillatory matrices, totally nonnegative matrices, mixed multiple orthogonal polynomials, Favard spectral representation, positive bidiagonal factorization, mixed multiple Gaussian quadrature}

	\subjclass{42C05, 33C45, 33C47, 47B39, 47B36}

	%\enlargethispage{1.25cm}

	\begin{abstract}
		%This work is devoted to Spectral theorems beyond self-adjoint or normal operators. 
		Spectral and factorization properties of oscillatory matrices leads to a spectral Favard theorem for bounded banded matrices, that admit a positive bidiagonal factorization, in terms of sequences of mixed multiple orthogonal polynomials with respect to a set positive Lebesgue--Stieltjes measures.
	A mixed multiple Gauss quadrature formula with  corresponding degrees of precision is given. 
			\end{abstract}
	
	\maketitle
	
	% \clearpage
	
	 \tableofcontents
	
	\section{Introduction}

This work is devoted to spectral theorems beyond self-adjoint or normal operators. We give conditions, the existence of a positive bidiagonal factorization, to be explained later, such we can state a spectral Favard theorem for bounded banded semi-infinite matrices. The study of symmetric tridiagonal operators acting in the Hilbert space $\ell^2$ can under an appropriate chosen basis be reduced to an infinite Jacobi matrix enabling a deeper understanding of this spectral theory.
Here is where the theory of general orthogonal polynomials come into place to derive the spectral and the resolvent set for selfadjoint operators (cf. \cite{nikishin_sorokin}).

 Multiple orthogonal polynomials are traditionally linked with the theory of Hermite--Pad\'e and its applications to the constructive function theory. For  good introductions to multiple orthogonal polynomials see the book by Nikishin and Sorokin \cite{nikishin_sorokin} and the chapter by Van Assche in \cite[Ch. 23]{Ismail} and for a  inspiring basic introduction see \cite{andrei_walter}.  Multiple orthogonal polynomials is a very active research area: for asymptotic of zeros see \cite{Aptekarev_Kaliaguine_Lopez}, for a Gauss--Borel perspective see \cite{afm}, for Christoffel perturbations see \cite{bfm}, for applications to random matrix theory see \cite{Bleher_Kuijlaars}. Mixed multiple orthogonal polynomials, and the corresponding Riemann--Hilbert problem, have found applications in Brownian bridges, or non-intersecting Brownian motions, that leave from $ p$ points and arrive to $q$ points \cite{Evi_Arno}, and in the discussion of multicomponent Toda, cf.  \cite{adler,afm}. Mixed multiple orthogonal polynomials  also appear in applications to number theory.  Apéry, cf. \cite{Apery}, proved in 1979 that $\zeta(3)$ is irrational. The proof is based  on an mixed Hermite--Padé approximation to three functions. Also mixed multiple orthogonal polynomials and corresponding mixed Hermite--Padé  approximations have been used  to show that infinitely many values of the $\zeta$ function at odd integers are irrational, cf. \cite{Ball_Rivoal}, and  that at least one of the numbers $ \zeta(5),\zeta(7),\zeta(9),\zeta(11)$ is irrational, cf.  \cite{Zudilin}.

Another field of application of multiple orthogonal polynomials is the spectral analysis of high order difference operators.
Spectral theorems hold beyond self-adjointness for normal operators (the operator commutes with its adjoint). In the case of banded Hessenberg operators, the self-adjointness or normality no more takes place.

A first attempt to tackle these problems has been made by Kalyagin in \cite{Kalyagin,Kaliaguine,KaliaguineII,Aptekarev_Kaliaguine_Saff}. There the author defines a class of operators related with the Hermite--Pad\'e approximants and connects their spectral analysis with the convergence problem for simultaneous Hermite--Pad\'e rational approximants
of a system of resolvent functions of the operator 
(that coincides with the notion of multiple orthogonality). See also \cite{Beckermann_Osipov}.
At the same time this connection leads to a new solution of the direct and inverse spectral problems for the operators based on the Jacobi--Perron algorithm and vector continued fractions (cf. \cite{Aptekarev_Kaliaguine_VanIseghem,Sorokin_Van_Iseghem_1,Sorokin_Van_Iseghem_2}).
This approach serves as a base of a new method of the investigation on nonlinear discrete dynamical systems. As an example, global solutions of a hierarchy of discrete KdV equations are obtained (cf. \cite{Aptekarev_Kaliaguine_VanIseghem,Sorokin_Van_Iseghem_3,dolores_ana_ab1,dolores_ana_ab2,dolores}).

Recently, in a series of works (cf. \cite{nuestro2,PBF_1,nuestro1}) we have analyzed the applications of type I and II multiple orthogonal polynomials to certain Markov chains also called non simple random walks (i.e., beyond birth and death). At the end we got a spectral Favard theorem with an application to Markov chains described in terms of a bounded banded ($p+2$)-diagonal (with one superdiagonal and $p$ subdiagonals) oscillatory Hessenberg operators that admit positive bidiagonal factorizations.

\enlargethispage{1cm}
The main result we achieved in \cite{PBF_1} is that bounded banded Hessenberg matrices that admit a positive bidiagonal factorization have a set of positive Stieltjes--Lebesgue measures, and can be spectrally described by multiple orthogonal polynomials. This extends to the non-normal scenario the spectral Favard theorem for Jacobi matrices (cf. \cite{Ismail}).
An important feature of the method applied in \cite{PBF_1} to describe the spectrality of a banded Hessenberg operator, is the multiple Gauss quadrature formula that we get with the exact degrees of precision.
	
	In this paper we consider a bounded banded operator $T$ whose semi-infinite matrix is a banded matrix with $q$ superdiagonals and $p$ subdiagonals and such that the leading principal
	submatrices are given by
	\begin{align}\label{eq:monic_Hessenberg}
		T^{[N]}&=
		\begin{bNiceMatrix}[columns-width = 1.5cm]
			T_{0,0} &\Cdots &&T_{0,q}& 0 & \Cdots &&0\\
			\Vdots& &&&\Ddots &\Ddots& &\Vdots\\
			T_{p,0}&&&&&&&\\
			0&\Ddots&&&&&&0\\%[10pt]
			\Vdots&\Ddots&&&&&&T_{N-q,N}\\
			&&&&&&&\Vdots\\
			&&&&&&&\\[6pt]
			0&\Cdots&&&0&T_{N,N-p}&\Cdots&T_{N,N}
			%\phantom{X} &\phantom{X} &\phantom{X}&\phantom{X}&\phantom{X}& \phantom{X}& \phantom{X}&\phantom{X}
			%\CodeAfter\line{4-1}{7-4} \line{5-1}{7-3}\line{1-3}{5-8}\line{2-3}{6-8}\line{2-2}{7-8}\SubMatrix[{1-1}{6-8}]
		\end{bNiceMatrix}
	\end{align}
	where  it assumed that the entries in the extreme diagonals do not cancel
\begin{align}\label{eq:not_zero_extreme_diagonals}
		T_{n+p,n}&\neq 0,&T_{n,n+q}&\neq 0, & n&\in\N_0.
\end{align} 
In what follows we will show that when this  matrix, after a shift, admits a positive bidiagonal factorization and, consequently, is oscillatory, we can find a spectral Favard theorem. Now, we have mixed multiple orthogonal polynomials with respect to a matrix of positive Lebesgue--Stieltjes measures. As an application we derive the corresponding Gauss quadrature formula for this matrix of measures and determine their degrees of precision.

While the extension given in this paper  of the spectral Favard theorem of \cite{PBF_1} for banded Hessenberg matrices to arbitrary banded matrices  is natural, there were several key issues to resolve before achieving this large extension. The first one was to understand the role of the characteristic polynomial that is no longer an orthogonal polynomial, the second was to find the relationship of the determinants of the two families of mixed multiple orthogonal polynomials to the characteristic polynomial and its zeros, and finally the extension of the positive bidiagonal factorization  to this general situation and the application of the oscillation properties of eigenvectors.
	
	Within this introduction we discuss, in the first place, some preliminary staff on totally nonnegative  matrices, stating (without proof) the results needed later on. Then, we show how the well known bounded tridiagonal Jacobi matrix for which we have the spectral Favard theorem happens to be oscillatory after an adequate shift and have a positive bidiagonal factorization. Finally, we use the Gauss--Borel factorization of a moment matrix to construct mixed multiple orthogonality on the step-line.

	\subsection{Totally nonnegative and oscillatory matrices}
Totally nonnegative (TN) matrices are those with all their minors nonnegative, cf.  \cite{Fallat-Johnson,Gantmacher-Krein}, and the set of nonsingular TN matrices is denoted by InTN. Oscillatory matrices, cf.  \cite{Gantmacher-Krein}, are totally nonnegative, irreducible \cite{Horn-Johnson} and nonsingular. Notice that the set of oscillatory matrices is denoted by IITN (irreducible invertible totally nonnegative) in \cite{Fallat-Johnson}. An oscillatory matrix~$T$ is equivalently defined as a totally nonnegative matrix $A$ such that for some $n$ we have that $A^n$ is totally positive (all minors are positive). From Cauchy--Binet Theorem one can deduce the invariance of these sets of matrices under the usual matrix product. Thus, following \cite[Theorem 1.1.2]{Fallat-Johnson} the product of matrices in InTN is again InTN (a similar statement hold for TN or oscillatory matrices).

We have the important result:
\begin{teo}[Gantmacher--Krein Criterion] \cite[Chapter 2, Theorem 10]{Gantmacher-Krein}. \label{teo:Gantmacher--Krein Criterion}
	A~totally non negative matrix  is oscillatory if and only if it is nonsingular and the elements at the first subdiagonal and first superdiagonal are positive.
\end{teo}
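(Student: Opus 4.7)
The plan is to prove both directions separately, using the structural properties of totally nonnegative matrices recalled in this subsection.

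\smallskip
\noindent\emph{Forward direction.} Suppose the matrix $A$ is oscillatory. By definition some power $A^m$ is totally positive, and in particular nonsingular; hence $\det A\neq 0$ and $A$ itself is nonsingular. To see that $A_{i,i+1}>0$ and $A_{i+1,i}>0$, I would argue by contradiction. If, say, $A_{i,i+1}=0$, the nonnegativity of the $2\times 2$ minors of a TN matrix forces a ``staircase'' zero pattern around that position, so the zero propagates to neighbouring entries of $A$. Combining this propagation with the Cauchy--Binet formula applied to $A^m=A\cdot A^{m-1}$, one shows that some minor of $A^m$ must vanish, contradicting its total positivity. The argument for the first subdiagonal is symmetric.

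\smallskip
\noindent\emph{Reverse direction.} Assume $A$ is nonsingular, TN, with $A_{i,i+1}>0$ and $A_{i+1,i}>0$ whenever these positions exist. We need to exhibit $m\in\N$ such that $A^m$ is TP. The natural tool is the planar-network/Lindstr\"om--Gessel--Viennot representation of TN matrices: every minor of $A^m$ equals a sum, indexed by families of vertex-disjoint $m$-step paths in a weighted digraph coding $A$, of products of edge weights. Positivity of the first sub- and super-diagonal entries supplies a positive-weight nearest-neighbour random walk, so taking $m$ sufficiently large in terms of the matrix order, every pair $(I,J)$ of equicardinal index sets admits a vertex-disjoint family of positively weighted paths from $I$ to $J$; the associated minor of $A^m$ is then strictly positive.

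\smallskip
The main obstacle is the reverse direction: positivity of individual entries of $A^m$ for large $m$ follows immediately from the existence of the nearest-neighbour walk, but upgrading this to positivity of \emph{every} minor requires constructing vertex-disjoint path systems for arbitrary choices of $I$ and $J$, which is the genuinely combinatorial core of the theorem. Following Gantmacher--Krein, I would handle this by induction on the matrix order in tandem with the bidiagonal factorisation of a nonsingular TN matrix, the strict positivity of the first sub- and super-diagonals being precisely what guarantees that the corresponding elementary bidiagonal factors are nontrivial and so allows the power $m$ to be bounded in terms of the dimension.
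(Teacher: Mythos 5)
The paper does not prove this statement: it is quoted verbatim, without proof, from \cite[Chapter 2, Theorem 10]{Gantmacher-Krein} (the introduction explicitly says these preliminaries are stated without proof), so there is no in-paper argument to compare yours with; I can only assess your sketch on its own merits. Your forward direction is essentially the standard argument and is sound once made precise: nonsingularity is immediate because some power $A^m$ is totally positive, and if $a_{i,i+1}=0$ then the shadow property of zero entries in a TN matrix forces $a_{k,l}=0$ for all $k\leqslant i$, $l\geqslant i+1$ (nonsingularity excludes the southwest shadow, which would make the first $i+1$ columns dependent); this block lower-triangular structure is preserved under powers, so already the entry $(A^m)_{1,n}$ vanishes for every $m$, contradicting total positivity. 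Your ``staircase propagation'' phrase needs to be upgraded to exactly this shadow statement, but the idea is correct.

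The reverse direction, however, contains a genuine gap. The assertion that for $m$ large enough every pair of equicardinal index sets $(I,J)$ admits a vertex-disjoint, positively weighted path family in the $m$-fold concatenation of the planar network of $A$ \emph{is} the theorem (with the sharp exponent $m=n-1$), and neither the nearest-neighbour-walk remark nor the appeal to ``induction in tandem with the bidiagonal factorisation'' establishes it. Concretely, two lemmas are missing: (a) that positivity of all entries $a_{i+1,i}$ and $a_{i,i+1}$ forces any bidiagonal (Loewner--Whitney) factorisation of the nonsingular TN matrix to contain, for every $i$, at least one lower elementary factor with positive parameter at position $(i+1,i)$ and one upper factor with positive parameter at $(i,i+1)$ --- this is true but uses the TN structure of the triangular factors, not just entry positivity of $A$, and you state it without argument; and (b) that concatenating $n-1$ copies of such a factorisation yields a word in elementary bidiagonal factors (equivalently, a planar network) rich enough that every minor of $A^{n-1}$ is positive, which is the combinatorial core you explicitly acknowledge but do not carry out. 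As written, the ``if'' implication is therefore asserted rather than proved; this is precisely the content for which the paper defers to \cite{Gantmacher-Krein} and for which a complete modern treatment can be found in \cite{Fallat-Johnson} via the characterisation of oscillatory (IITN) matrices through their bidiagonal factorisations.
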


Regarding tridiagonal matrices we have the following classical result:
\begin{teo}\label{teo:Jacobi oscillatory}
	\cite[Chapter XIII,\S 9]{Gantmacher} and \cite[Chapter 2,Theorem 11]{Gantmacher-Krein}.
	A tridiagonal matrix is oscillatory if and only if, 
	\begin{enumerate}
		\item The matrix entries of the first subdiagonal and first superdiagonal are positive.
		\item All leading principal minors are positive.
	\end{enumerate}
\end{teo}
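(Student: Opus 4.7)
The plan is to prove both implications through the bidiagonal ($LU$) factorization of a tridiagonal matrix, using Theorem \ref{teo:Gantmacher--Krein Criterion} as the pivotal tool. The central observation is that for a tridiagonal matrix, total nonnegativity is equivalent to the existence of an $LU$ factorization with nonnegative bidiagonal factors; consequently positivity of the nontrivial entries of these factors is the right bridge between conditions (1), (2) and oscillation.

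For the direction $(\Rightarrow)$, if $T$ is oscillatory, then Theorem \ref{teo:Gantmacher--Krein Criterion} immediately yields condition (1) since oscillatory matrices have positive first sub- and superdiagonals. For condition (2), I would observe that each leading principal submatrix $T^{[k]}$ is itself tridiagonal with the same positive sub- and superdiagonals, and is totally nonnegative because principal submatrices inherit total nonnegativity. It then remains to show $\det T^{[k]}>0$; to do this I would invoke the standard $LU$ decomposition of a tridiagonal matrix, showing that for an oscillatory $T$ the pivots $U_{k,k}$ are forced to be strictly positive (a zero pivot together with TN tridiagonal structure would force the later subdiagonal entries of $L$, and ultimately the super/subdiagonals of $T$ itself, to vanish, contradicting (1)). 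Hence $\det T^{[k]}=\prod_{j<k}U_{j,j}>0$.

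For the direction $(\Leftarrow)$, I would perform the standard $LU$ factorization $T=LU$, with $L$ unit lower bidiagonal and $U$ upper bidiagonal determined by the recurrences
\begin{align*}
U_{k,k}&=\frac{\Delta_{k+1}}{\Delta_{k}}, & U_{k,k+1}&=T_{k,k+1}, & L_{k+1,k}&=\frac{T_{k+1,k}}{U_{k,k}},
\end{align*}
where $\Delta_k$ denotes the $k$-th leading principal minor. Condition (2) makes each $U_{k,k}>0$, and condition (1) then yields $U_{k,k+1}>0$ and $L_{k+1,k}>0$. Bidiagonal matrices with nonnegative entries are trivially totally nonnegative, since each of their minors is either zero or a product of entries. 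By the product closure of TN matrices (Cauchy--Binet), $T=LU$ is TN; by $\det T^{[k]}=\prod_{j<k}U_{j,j}>0$ it is also nonsingular; and its first sub/superdiagonals are positive by (1). Theorem \ref{teo:Gantmacher--Krein Criterion} then gives oscillation.

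The main obstacle is the $(\Rightarrow)$ direction, specifically ensuring that oscillation of $T$ actually propagates to strict positivity (not just nonnegativity) of the leading principal minors. This is the place where one must carefully exploit the combination of nonsingularity, positive off-diagonals, and the rigid structure of TN tridiagonal matrices via their bidiagonal factorization; the reverse direction reduces to a routine calculation once the $LU$ formulas above are in hand.
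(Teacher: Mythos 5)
The paper itself does not prove this statement: it is quoted as a classical result with a citation to Gantmacher and Gantmacher--Krein, so there is no internal proof to compare against; your proposal has to be judged on its own. Your $(\Leftarrow)$ direction is correct and complete in outline: condition (2) guarantees the nonzero leading minors needed for the Doolittle factorization, your recurrences for $U_{k,k}$, $U_{k,k+1}$, $L_{k+1,k}$ are right, nonnegative bidiagonal matrices are indeed totally nonnegative, Cauchy--Binet gives total nonnegativity of the product, and Theorem \ref{teo:Gantmacher--Krein Criterion} then yields oscillation.

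The gap is in $(\Rightarrow)$, exactly where you flagged it. You ``invoke the standard LU decomposition of a tridiagonal matrix'', but that factorization (with unit lower bidiagonal $L$) exists precisely when the leading principal minors are nonzero, which is what you are trying to prove, so the step is circular as written; moreover the parenthetical claim that a zero pivot ``would force the later subdiagonal entries of $L$ \dots to vanish'' is not what happens: once a leading minor vanishes, the unit-$L$ factorization in general fails to exist (or exists non-uniquely, with factors that need not be bidiagonal), so there is no $L$ whose entries you can interrogate. Two standard repairs close this. (i) Quote Theorem \ref{teo:LU}: an oscillatory matrix is InTN, hence $T$ admits a factorization into nonsingular lower and upper triangular factors; then each leading principal block of $T$ is the product of the corresponding leading blocks of the factors, so every leading principal minor is nonzero, and being a minor of a TN matrix it is positive. (ii) Argue directly with the three-term recursion for the leading minors $D_k$ of the tridiagonal matrix, $D_{k+1}=T_{k,k}D_k-T_{k,k-1}T_{k-1,k}D_{k-1}$ with $D_0=1$: all $D_k\geqslant 0$ by total nonnegativity, and if $k$ were the first index with $D_k=0$, then either $D_k$ is the full determinant, contradicting nonsingularity, or $D_{k+1}=-T_{k,k-1}T_{k-1,k}D_{k-1}<0$ by condition (1), contradicting total nonnegativity. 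With either repair your overall plan (both directions through bidiagonal factorizations plus Theorem \ref{teo:Gantmacher--Krein Criterion}) goes through, and is in the same spirit as the classical argument in Gantmacher--Krein.
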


Gauss--Borel factorizations are intimately related with these concepts:

\begin{teo}\cite[Theorem 2.4.1]{Fallat-Johnson}\label{teo:LU}
$T\in\operatorname{InTN}$  if and only if it admits a Gauss--Borel factorization $T=L^{-1}U^{-1}$ with $L,U\in\operatorname{InTN}$, lower and upper triangular matrices, respectively.
\end{teo}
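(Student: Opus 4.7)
The plan is to prove the two implications separately, working in the standard framework of TN matrix factorizations.

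For the sufficiency direction $(\Leftarrow)$, the essential tool is the Cauchy--Binet formula, which shows that the product of InTN matrices is again InTN. Once one verifies that the stated hypotheses force $L^{-1}$ and $U^{-1}$ to still be (triangular) InTN---which uses the triangular structure of $L$ and $U$ together with the positivity of their leading principal minors (these minors are preserved, up to reciprocals, under inversion for triangular matrices)---the product $T = L^{-1}U^{-1}$ is automatically InTN.

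For the necessity direction $(\Rightarrow)$, assume $T\in\operatorname{InTN}$. The first step is to establish that every leading principal minor $\det T[1{:}k,1{:}k]$ is strictly positive. Non-negativity is immediate from $T\in$ TN, and strictness follows from nonsingularity via a Schur-complement argument: if some leading principal minor vanished, the rank deficiency would propagate and contradict $\det T\neq 0$. Positivity of all leading principal minors is exactly what Gauss elimination requires, so $T$ admits a unique factorization of the claimed triangular type, which I would label $T=L^{-1}U^{-1}$ following the paper's convention. The second and heavier step is to show that the triangular factors are themselves InTN: the minors of the Gauss factors can be written, by Jacobi's determinantal identity and Sylvester's identity, as ratios of minors of $T$, so non-negativity cascades from $T$ onto the factors; nonsingularity of each factor is a direct consequence of $\det T\neq 0$, and the triangular InTN factors thus inverted back give the stated $L,U\in\operatorname{InTN}$.

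The main obstacle I anticipate is the second step of necessity, namely proving the triangular factors belong to InTN. The combinatorics of which minors of $T$ are needed to express a given minor of the triangular factor requires careful book-keeping; the key identity relates $\det L^{-1}[\alpha|\beta]$ to an appropriate quotient of minors of $T$ indexed by initial column (or row) blocks, and verifying that every such quotient is non-negative for arbitrary index sets $\alpha,\beta$ is where the real work lies. The remaining ingredients (existence of the LU decomposition, closure of InTN under multiplication) are standard consequences of the results already cited in the excerpt, notably Theorem~\ref{teo:Gantmacher--Krein Criterion}.
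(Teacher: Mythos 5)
The paper itself gives no proof of this statement: it is quoted directly from \cite[Theorem 2.4.1]{Fallat-Johnson} as background, so your proposal can only be compared with the standard argument, and against that standard there is a genuine flaw in your sufficiency direction. You assert that triangularity plus positivity of leading principal minors forces $L^{-1}$ and $U^{-1}$ to be InTN whenever $L,U\in\operatorname{InTN}$. This is false: for $L=\left(\begin{smallmatrix}1&0\\1&1\end{smallmatrix}\right)\in\operatorname{InTN}$ one has $L^{-1}=\left(\begin{smallmatrix}1&0\\-1&1\end{smallmatrix}\right)$, which has a negative entry and is not totally nonnegative; in general the inverse of a TN matrix is TN only after the checkerboard conjugation $S A^{-1} S$ with $S=\diag(1,-1,1,\dots)$, and positivity of the leading principal minors of a triangular inverse says nothing about its remaining minors. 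Taken literally, the equation $T=L^{-1}U^{-1}$ with $L,U\in\operatorname{InTN}$ would make the equivalence fail in both directions (e.g.\ $T=\left(\begin{smallmatrix}1&0\\1&1\end{smallmatrix}\right)$ admits no such factorization); the statement must be read, as in Fallat--Johnson, as saying that the lower and upper triangular \emph{factors of $T$} (denoted $L^{-1}$ and $U^{-1}$ only because of the paper's Gauss--Borel conventions) are themselves InTN. With that reading the ``if'' part is immediate from Cauchy--Binet closure of InTN under products and there is nothing about inverses to establish, whereas your attempt to bridge the literal wording introduces a step that cannot be repaired.

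In the necessity direction the decisive step---that the triangular factors of an InTN matrix are again totally nonnegative---is only announced, not carried out: you flag the minor bookkeeping as the anticipated obstacle, and the Jacobi/Sylvester ``ratios of minors of $T$'' plan does not produce arbitrary minors $\det L[\alpha|\beta]$ of the factors in a usable closed form; the standard proofs (Cryer's, or the elementary bidiagonal/Neville-elimination argument that Fallat--Johnson actually use for their Theorem 2.4.1) proceed inductively, peeling off one elementary bidiagonal factor at a time and checking that total nonnegativity is preserved at each elimination step. Two smaller points: strict positivity of the leading principal minors of an InTN matrix is obtained most cleanly from Fischer's inequality for TN matrices (the determinant is bounded above by the product of complementary principal minors), rather than from the vague rank-propagation remark; and neither the existence of the LU factorization nor the multiplicative closure of InTN follows from Theorem \ref{teo:Gantmacher--Krein Criterion}, which characterizes oscillatory matrices and plays no role in this statement.
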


The following spectral theorems are extracted from \cite{Fallat-Johnson}, see also \cite{Gantmacher-Krein}.

\begin{teo}[Eigenvalue] \cite[Theorem 5.2.1]{Fallat-Johnson}\label{teo:eigennvalues_TN} 
	Given an oscillatory matrix $T\in\R^{N\times N}$~ the eigenvalues of $T$ are $N$ distinct positive numbers. 
\end{teo}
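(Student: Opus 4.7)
The plan is to reduce to totally positive matrices by passing through a large enough power of $T$, and to combine this with spectral information coming from compound matrices. Since $T$ is oscillatory, by definition there exists an integer $m\geq 1$ such that $T^m$ is totally positive, i.e.\ every minor of $T^m$ is strictly positive.

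First I would establish that the spectrum of $T$ is real and strictly positive. This uses the classical Gantmacher--Krein fact that every totally nonnegative matrix has nonnegative real spectrum, which can be obtained by approximating $T$ from within the totally positive matrices and invoking continuity of eigenvalues together with the known positivity of the spectrum of a TP matrix. Because $T$ is nonsingular, no eigenvalue can vanish, so all eigenvalues lie in $(0,\infty)$.

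Second, distinctness is obtained via the $k$-th compound matrix $T^{\wedge k}$, whose eigenvalues are precisely the $\binom{N}{k}$ products $\lambda_{i_1}\cdots\lambda_{i_k}$ with $i_1<\cdots<i_k$. The Cauchy--Binet identity gives $(T^m)^{\wedge k}=(T^{\wedge k})^m$, and since every $k\times k$ minor of the TP matrix $T^m$ is strictly positive, this power is an entrywise positive matrix. Perron--Frobenius therefore makes the spectral radius of $T^{\wedge k}$ a simple eigenvalue. Ordering the spectrum of $T$ as $\lambda_1\geq\cdots\geq\lambda_N>0$, that spectral radius equals $\lambda_1\cdots\lambda_k$, and its simplicity forces $\lambda_1\cdots\lambda_k>\lambda_1\cdots\lambda_{k-1}\lambda_{k+1}$, which simplifies to $\lambda_k>\lambda_{k+1}$. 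Running $k=1,\ldots,N-1$ yields the strict chain $\lambda_1>\lambda_2>\cdots>\lambda_N>0$, as claimed.

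I expect the main difficulty to lie in the compound matrix step: verifying the Cauchy--Binet identity in the precise form $(T^m)^{\wedge k}=(T^{\wedge k})^m$, correctly identifying the Perron root with the product of the $k$ largest eigenvalues of $T$, and transferring simplicity of that root to strict inequalities between consecutive eigenvalues. The remaining ingredients, namely Perron--Frobenius theory for nonnegative matrices admitting a positive power and the spectral properties of TN matrices, are classical results from \cite{Gantmacher-Krein,Fallat-Johnson} that we may use freely.
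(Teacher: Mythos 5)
Your argument is correct: the reduction to a totally positive power, the use of the multiplicativity of compound matrices $(T^{m})^{\wedge k}=(T^{\wedge k})^{m}$ to make $T^{\wedge k}$ primitive, and the transfer of simplicity of the Perron root $\lambda_1\cdots\lambda_k$ into the strict inequalities $\lambda_k>\lambda_{k+1}$ is exactly the classical Gantmacher--Krein/Fallat--Johnson proof. Note that the paper itself offers no proof of this statement; it is quoted as Theorem 5.2.1 of \cite{Fallat-Johnson}, so there is nothing to compare beyond observing that your proposal reproduces the standard argument from that source. A minor remark: your first step leans on Whitney's density of totally positive matrices in the totally nonnegative ones plus the TP spectral theorem to get reality and nonnegativity; this could be avoided by running your compound-matrix induction directly with moduli (a complex pair of equal modulus would already contradict the simplicity and strict dominance of the Perron root of $T^{\wedge k}$), but as written the appeal to these classical facts is legitimate.
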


\begin{teo}[Interlacing of eigenvalues]\cite[Theorem 5.5.2]{Fallat-Johnson}\label{teo:eigennvalues_TN_2} 
Given an oscillatory matrix $T\in\R^{N\times N}$~ the eigenvalues of $T$ strictly interlace the eigenvalues of the two principal submatrices of order $(N-1)$, $T(1)$ or $T(N)$, obtained from $T$ by deleting the first row and column or the last row and column. 
\end{teo}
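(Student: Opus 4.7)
The strategy is to translate the interlacing statement into a question about the zeros and poles of a rational function built from the $(1,1)$-entry of the resolvent $(\lambda I-T)^{-1}$, and then to invoke oscillation theory to control the signs of its residues. As a preliminary step I verify that both $T(1)$ and $T(N)$ are themselves oscillatory: they are principal submatrices of $T$ indexed by consecutive rows and columns, hence totally nonnegative (their minors are minors of $T$), and their first sub- and super-diagonals consist of entries from the corresponding diagonals of $T$, which are strictly positive by the oscillatority of $T$. Theorem~\ref{teo:Gantmacher--Krein Criterion} then identifies both as oscillatory, and Theorem~\ref{teo:eigennvalues_TN} provides $N-1$ distinct positive eigenvalues $\mu_1>\cdots>\mu_{N-1}$ of $T(1)$, to be compared with the $N$ distinct positive eigenvalues $\lambda_1>\cdots>\lambda_N$ of $T$.

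Next, I would consider the rational function
\begin{equation*}
	\rho(\lambda)=\frac{\det(\lambda I_{N-1}-T(1))}{\det(\lambda I_N-T)}=e_1^{\top}(\lambda I_N-T)^{-1}e_1,
\end{equation*}
whose poles are the $\lambda_k$ and whose zeros are the $\mu_k$. Using the biorthogonal spectral decomposition of $T$ in terms of right and left eigenvectors $u^{(k)}$ and $v^{(k)}$, one obtains the partial-fraction expansion $\rho(\lambda)=\sum_{k=1}^N c_k/(\lambda-\lambda_k)$ with residues $c_k=u^{(k)}_1\,v^{(k)}_1/\langle u^{(k)},v^{(k)}\rangle$. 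Once every $c_k$ is shown to be strictly positive, $\rho'<0$ throughout the domain of $\rho$, and on each bounded interval $(\lambda_{k+1},\lambda_k)$ the function $\rho$ is a strictly decreasing bijection onto $\R$; this forces exactly one zero $\mu_k\in(\lambda_{k+1},\lambda_k)$ and delivers the claimed strict interlacing. The same argument with $e_N$ in place of $e_1$ handles the submatrix $T(N)$.

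The main obstacle is the positivity of the residues $c_k$. This is where one must invoke the Gantmacher--Krein oscillation theorem for eigenvectors of an oscillatory matrix: the right eigenvector $u^{(k)}$ associated with $\lambda_k$ has exactly $k-1$ sign changes among its coordinates, so in particular $u^{(k)}_1\neq 0$; applying the same result to $T^{\top}$ yields the analogous property for $v^{(k)}$. A careful bookkeeping of sign patterns, combined with the biorthogonality relations $\langle u^{(k)},v^{(j)}\rangle=0$ for $j\neq k$ and the non-vanishing of $\langle u^{(k)},v^{(k)}\rangle$, shows that after a suitable normalization one has $c_k>0$ for every $k$. The remainder of the argument is a clean consequence of elementary partial-fraction manipulations; the real weight of the theorem sits in this oscillation-theoretic sign analysis.
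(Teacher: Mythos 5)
First, a point of comparison: the paper does not prove this statement at all --- it is quoted as background directly from \cite[Theorem 5.5.2]{Fallat-Johnson}, so there is no internal proof to measure your argument against. Your strategy --- writing $\rho(\lambda)=\det(\lambda I_{N-1}-T(1))/\det(\lambda I_N-T)=e_1^\top(\lambda I_N-T)^{-1}e_1$, expanding in partial fractions over the simple spectrum guaranteed by Theorem~\ref{teo:eigennvalues_TN}, and reducing strict interlacing to positivity of the residues $c_k=u_1^{(k)}v_1^{(k)}/\langle u^{(k)},v^{(k)}\rangle$ --- is a legitimate classical route, and the reduction itself is sound; in particular $c_k\neq 0$ does follow from the nonvanishing of the first entries of left and right eigenvectors in Theorem~\ref{teo:eigenvectorII}.

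The genuine gap is that the positivity $c_k>0$ is asserted, not proved, and the tools you invoke cannot deliver it. Knowing only that $u^{(k)}$ and $v^{(k)}$ each have exactly $k-1$ sign changes and nonzero first entries does not determine the sign of $\langle u^{(k)},v^{(k)}\rangle$ relative to $u_1^{(k)}v_1^{(k)}$: for instance $u=(1,3,-1)^\top$ and $v=(1,-3,-1)^\top$ each have one sign change and positive first entry, yet $\langle u,v\rangle=-7<0$, and nothing in Theorem~\ref{teo:eigenvectorII} or in the biorthogonality relations $\langle u^{(k)},v^{(j)}\rangle=0$, $j\neq k$, excludes such a configuration. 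Moreover $c_k$ is invariant under rescaling of $u^{(k)}$ and $v^{(k)}$, so ``after a suitable normalization'' cannot rescue the sign. Since positivity of all residues of $\rho$ is \emph{equivalent} to the strict interlacing you want (each $c_k>0$ forces exactly one zero in each gap, and conversely), the step you delegate to ``careful bookkeeping'' is the whole theorem; the known proofs obtain it from strictly stronger total-positivity input --- Perron--Frobenius applied to the compound matrices, which gives sign-consistency of the $k\times k$ minors built from the first $k$ right (and left) eigenvectors --- and this is how the result is established in Gantmacher--Krein and in \cite{Fallat-Johnson}, not from sign-change counts of single eigenvectors. A smaller loose end: to declare $T(1)$ and $T(N)$ oscillatory via the Gantmacher--Krein criterion you must also verify they are nonsingular (positivity of the trailing and leading principal minors of an InTN matrix, e.g.\ via Theorem~\ref{teo:LU}); this is easily fixed, and in fact is not needed for your main line of argument.
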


We  need to introduce the following notation.
We define the total sign variation of a totally nonzero vector (no entry of the vector $u$ is zero) as $v(u)=\text{cardinal}\{i\in\{1,\dots,n-1\} : u_iu_{i+1}<0\}$.
For a general vector $u\in\R^n$ we define $v_m(u)$ ($v_M(u)$) as the minimum (maximum) value $v(y)$ among all totally nonzero vectors $y$ that coincide with $u$ in its nonzero entries. For $v_m(u)=v_M(u)$ we write $v(u)\coloneq v_m(u)=v_M(u)$.

\begin{teo}[Eigenvectors] 
	\label{teo:eigenvectorII}
	Let $T\in\R^{N\times N}$ be an oscillatory matrix, and $u^{(k)}$ ($w^{(k)}$) the right (left) eigenvector corresponding to $ \lambda_k$, the $k$-th largest eigenvalue of $A$. Then
	\begin{enumerate}
		\item \cite[Theorem 5.3.3]{Fallat-Johnson} We have $v_m(u^{(k)})=v_M(u^{(k)})= v(u^{(k)})=k-1$ ($v_m(w^{(k)})=v_M(w^{(k)})= v(w^{(k)})=k-1$). Moreover, the first and last entry of $u^{(k)}$ ($w^{(k)})$ are nonzero, and $u^{(1)}$ and $u^{(N)}$ ($w^{(1)}$ and $w^{(N)}$) are totally nonzero; the other vectors may have a zero entry.
		\item From Perron--Frobenius theorem we know that $u^{(1)}$ ($w^{(1)}$) can be chosen to be entrywise positive, and that the other eigenvectors $u^{(k)}$ ($w^{(k)}$), $k=2,\dots,n$ have at least one entry sign change. In fact, $u^{(N)}$ ($w^{(N)}$) strictly alternates the sign of its entries.
		% \item \cite[Theorem 5.3.5]{Fallat-Johnson} The $k$ nodes of $u^{(k+1)}$ ($w^{(k+1)}$) alternate with the $k-1$ nodes of $u^{(k)}$ ($w^{(k)}$).
		
	\end{enumerate}
\end{teo}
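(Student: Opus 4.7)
The plan is to lift $T$ to its exterior (compound) powers and combine the Perron--Frobenius theorem with a sign-variation lemma for matrices whose maximal minors have constant sign. Since $T$ is oscillatory, some power $T^m$ is totally positive, and by the Cauchy--Binet formula $C_k(T)^m=C_k(T^m)$ has strictly positive entries; thus $C_k(T)$ is a nonnegative matrix with a positive power and the Perron--Frobenius theorem applies to it. By Theorem \ref{teo:eigennvalues_TN} the spectrum of $T$ is $\lambda_1>\cdots>\lambda_N>0$, so the spectrum of $C_k(T)$ consists of the products $\lambda_{i_1}\cdots\lambda_{i_k}$ with $1\le i_1<\cdots<i_k\le N$; the largest of these, $\lambda_1\cdots\lambda_k$, is simple, and its Perron eigenvector must be the wedge $u^{(1)}\wedge\cdots\wedge u^{(k)}$, which is an eigenvector of $C_k(T)$ for that eigenvalue and is nonzero by linear independence of the $u^{(j)}$.

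Strict positivity of the Plücker coordinates of this wedge, after a global sign normalization of each $u^{(j)}$, says that every $k\times k$ minor of the matrix $U_k=[u^{(1)}\mid\cdots\mid u^{(k)}]$ is strictly positive. Claim (1) then follows from two applications of the Gantmacher--Krein sign-count lemma: if all maximal minors of an $N\times k$ matrix are nonzero and of constant sign, then $v_M\le k-1$ on its column span. Applied to $U_k$ this gives $v_M(u^{(k)})\le k-1$. Repeating the compound/Perron argument for $C_{N-k+1}(T)$ shows that all maximal minors of $V_k=[u^{(k)}\mid\cdots\mid u^{(N)}]$ have constant sign, and the dual form of the lemma forces $v_m\ge k-1$ on its column span, hence $v_m(u^{(k)})\ge k-1$. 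Combining, $v_m(u^{(k)})=v_M(u^{(k)})=v(u^{(k)})=k-1$. Non-vanishing of the first and last entries of $u^{(k)}$, and the total nonzero property of $u^{(1)}$ and $u^{(N)}$, are read off from strict positivity of specific corner minors of $U_k$ and $V_k$: a zero in an extreme entry would force a corresponding maximal minor to vanish, contradicting the strict positivity established above. The same argument applied to $T^\top$ (which is again oscillatory) handles the left eigenvectors $w^{(k)}$.

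Claim (2) is then a corollary: for $k=1$, Perron--Frobenius applied directly to $T$ yields strictly positive choices of $u^{(1)}$ and $w^{(1)}$; for $k\ge 2$, the exact count $v(u^{(k)})=k-1\ge 1$ already guarantees at least one sign change; and for $k=N$, the count $v(u^{(N)})=N-1$ together with the total nonzero property of $u^{(N)}$ (from part (1)) forces strict sign alternation of all its entries. The hardest step is the Gantmacher--Krein sign-count lemma itself, which is the nontrivial bridge from algebraic positivity of minors to the combinatorial bound on sign variations; a subsidiary technical difficulty is producing a single sign normalization of the $u^{(j)}$ compatible with the simultaneous positivity of all $k\times k$ minors of $U_k$ as $k$ varies.
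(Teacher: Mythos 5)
The paper itself does not prove this theorem: it is one of the preliminary results stated without proof, part (1) being quoted from \cite[Theorem 5.3.3]{Fallat-Johnson} and part (2) attributed to Perron--Frobenius, so your proposal has to be judged against the classical compound-matrix proof of Gantmacher--Krein/Fallat--Johnson, which is indeed the route you follow for the upper bound. That half is sound: $C_k(T)$ is nonnegative with a totally positive power (Cauchy--Binet), its Perron eigenvalue is $\lambda_1\cdots\lambda_k$, simple, with eigenvector $u^{(1)}\wedge\cdots\wedge u^{(k)}$, hence all maximal minors of $[u^{(1)}\mid\cdots\mid u^{(k)}]$ carry one strict sign and the sign-variation lemma gives $v_M(u^{(k)})\leqslant k-1$; your worry about a sign normalization simultaneous in $k$ is empty, since the lemma only needs a constant nonzero sign, not positivity.

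The genuine gap is the lower bound. You claim that ``repeating the compound/Perron argument for $C_{N-k+1}(T)$'' shows the maximal minors of $V_k=[u^{(k)}\mid\cdots\mid u^{(N)}]$ have constant sign. But Perron--Frobenius applied to $C_{N-k+1}(T)$ only controls the eigenvector of its \emph{largest} eigenvalue $\lambda_1\cdots\lambda_{N-k+1}$, i.e.\ the wedge $u^{(1)}\wedge\cdots\wedge u^{(N-k+1)}$; the wedge $u^{(k)}\wedge\cdots\wedge u^{(N)}$ belongs to the \emph{smallest} eigenvalue $\lambda_k\cdots\lambda_N$ of that compound, about which Perron--Frobenius gives no sign information (beyond the fact that it cannot be chosen nonnegative). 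The classical repair needs an ingredient you never invoke: with $D=\diag(1,-1,1,\dots)$ the matrix $DT^{-1}D$ is again oscillatory (sign-regularity of inverses of nonsingular totally nonnegative matrices), its $j$-th largest eigenvalue $1/\lambda_{N+1-j}$ has right eigenvector $Du^{(N+1-j)}$, and running your compound/Perron argument on it bounds $v_M\big(Du^{(k)}\big)\leqslant N-k$, which via the duality $v_m(u)+v_M(Du)=N-1$ yields $v_m(u^{(k)})\geqslant k-1$. Without this (or an equivalent duality argument) the exact count $v(u^{(k)})=k-1$, and in particular the strict sign alternation of $u^{(N)}$, are not established. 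A smaller soft spot: the nonvanishing of the first and last entries of $u^{(k)}$ does not follow from ``strict positivity of corner minors'' of $U_k$ --- a zero first entry of $u^{(k)}$ forces no particular $k\times k$ minor of $U_k$ to vanish; the standard argument is that a zero extreme entry would give $v_M(u^{(k)})\geqslant v_m(u^{(k)})+1$, contradicting the equality $v_m=v_M$ just proved, so this part is repairable from what you already have, unlike the lower-bound step.
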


\subsection{Jacobi matrices}
Let us consider the tridiagonal semi-infinite real matrix
\begin{align}\label{eq:Jacobi_matrix_def}
	J&\coloneq \left[\begin{NiceMatrix}[columns-width = auto]
		%	1 &0&\Cdots&&&0\\
		\mathscr m_0& 1 &0&\Cdots&\\
		\mathscr l_1 &\mathscr m_1& 1& \Ddots&\\
		0&\mathscr l_2&\mathscr m_2&1&\\
		\Vdots& \Ddots& \Ddots & \Ddots&\Ddots\\
		&&&&\\
		%	0	&\Cdots&&0&m_N&l_N
	\end{NiceMatrix}\right],
\end{align}
and assume that $\mathscr l_k>0$, $k\in \N$. This matrix is symmetrizable, as the positive diagonal matrix 
$	H=\diag(H_0,H_1,\dots)$, $H_0=1$, $H_n\coloneq \mathscr l_1\cdots \mathscr l_{n}$,
is such that $H^{-\frac{1}{2}}JH^{\frac{1}{2}}$ is symmetric.

If the matrix $J$ is bounded, all the possible eigenvalues of the submatrices $J^{[N]}$ belong to the disk $D(0,\|J\|)$.
As all the eigenvalues are real, let us consider those that are negative, and let $b$ be the supreme of the absolute values of all negative eigenvalues. Notice that $b \leqslant \|J\|$. 
\begin{teo}
	For $s\geqslant b$, the matrix $J_s=J+sI$ is oscillatory and admits a positive bidiagonal factorization in the form
	\begin{align*}
		J&=	\left[\begin{NiceMatrix}[columns-width=auto]
			1 &0&\Cdots&\\
			\alpha_2& 1 &\Ddots&\\
			0& \alpha_4& 1& \\
			\Vdots&\Ddots& \Ddots& \Ddots\\&&&
		\end{NiceMatrix}\right]
		\left[\begin{NiceMatrix}[columns-width = auto]
			\alpha_1& 1&0&\Cdots&\\
			0& \alpha_3& 1&\Ddots&\\
			\Vdots&\Ddots&\alpha_5&\Ddots&\\
			& &\Ddots &\Ddots &\\&&&&
		\end{NiceMatrix}\right],
	\end{align*}
	with $\alpha_n>0$.
\end{teo}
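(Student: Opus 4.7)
The plan is to check the two conditions of Theorem~\ref{teo:Jacobi oscillatory} for every finite leading principal submatrix $J_s^{[N]}$, and then extract the displayed bidiagonal factorization from the Gauss--Borel factorization guaranteed by Theorem~\ref{teo:LU}. Since $J$ is symmetrizable through the diagonal conjugation $H^{-1/2}JH^{1/2}$, every truncation $J^{[N]}$ is similar to a real symmetric matrix and so has entirely real spectrum. By the very definition of $b$, any negative eigenvalue $\lambda$ of any $J^{[N]}$ satisfies $\lambda\geqslant -b$, and consequently for $s\geqslant b$ every eigenvalue of $J_s^{[N]}=J^{[N]}+sI$ is nonnegative (and strictly positive for $s>b$).

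The first sub- and super-diagonal entries of $J_s^{[N]}$ coincide with those of $J^{[N]}$, namely $\mathscr l_k>0$ and $1>0$, so condition (i) of Theorem~\ref{teo:Jacobi oscillatory} is immediate. For condition (ii), each leading principal minor factors as
\[
\det\bigl(J_s^{[k]}\bigr)=\prod_{j=0}^{k}\bigl(\lambda_j(J^{[k]})+s\bigr)>0
\]
by the spectral positivity just obtained. Theorem~\ref{teo:Jacobi oscillatory} then yields that every $J_s^{[N]}$ is oscillatory, which is the required oscillatory property of the semi-infinite $J_s$.

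For the bidiagonal factorization I would invoke Theorem~\ref{teo:LU}: since $J_s^{[N]}\in\operatorname{InTN}$ it admits a Gauss--Borel factorization $J_s^{[N]}=L^{-1}U^{-1}$ with triangular $L,U\in\operatorname{InTN}$, and in the tridiagonal case $L^{-1}$ and $U^{-1}$ are both bidiagonal. Equating entries of $L^{-1}U^{-1}$ with the two displayed bidiagonal factors yields the recurrence
\[
\alpha_1=\mathscr m_0+s,\qquad \alpha_{2k-1}\alpha_{2k}=\mathscr l_k,\qquad \alpha_{2k}+\alpha_{2k+1}=\mathscr m_k+s,
\]
together with the telescoping identity $\alpha_1\alpha_3\cdots\alpha_{2k+1}=\det\bigl(J_s^{[k]}\bigr)$. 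Positivity of all leading principal minors then forces $\alpha_{2k+1}>0$ by induction, and hence $\alpha_{2k}=\mathscr l_k/\alpha_{2k-1}>0$ as well. Since the recurrence is independent of the size $N$, the factorizations for successive truncations are compatible and assemble into the claimed factorization of the full semi-infinite matrix.

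The subtle point requiring care is the boundary case $s=b$: if the supremum $b$ were attained by some eigenvalue of some $J^{[N_0]}$, then $\det(J_s^{[N_0]})=0$, and both the oscillation argument and the inductive positivity of $\alpha_{2N_0+1}$ would break down. I expect this to be the main obstacle, and would address it either by restricting to $s>b$ or by exploiting the strict eigenvalue interlacing for symmetric tridiagonal matrices with positive off-diagonals to show that the supremum is never attained by a finite truncation.
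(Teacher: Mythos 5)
Your proposal is correct and follows essentially the same route as the paper: oscillation of the truncations $J_s^{[N]}$ via Theorem \ref{teo:Jacobi oscillatory} (positive sub- and superdiagonal entries together with positive leading principal minors, the latter coming from the positivity of the shifted spectrum), and the bidiagonal factorization via the Gauss--Borel factorization of Theorem \ref{teo:LU}, with your explicit recurrence $\alpha_1=\mathscr m_0+s$, $\alpha_{2k-1}\alpha_{2k}=\mathscr l_k$, $\alpha_{2k}+\alpha_{2k+1}=\mathscr m_k+s$ merely making concrete the positivity that the paper delegates to that theorem. Your concern about the boundary case $s=b$ is well placed -- the paper itself quietly writes ``for $s>b$'' in its factorization step -- and your suggested fix via strict interlacing for symmetric(izable) tridiagonal truncations (the smallest eigenvalue strictly decreases with $N$, so the supremum $b$ is never attained by any finite truncation) is exactly what closes that gap.
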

\begin{proof}
	Take $s\geqslant b$, then $J_s$ has the eigenvalues of its leading principal submatrices $J^{[N]}_s=J^{[N]}+sI_{N+1}$ all positive. The corresponding characteristic polynomials are $P_{N+1}(x-s)= \det\big( xI_{N+1}-J_s^{[N]}\big)$, so that $\det J_s^{[N]}=(-1)^{N+1}P_{N+1}(-s)$ and,  as $-s$ is a lower bound for any possible zero of this monic polynomial, we find  that $(-1)^{N+1}P_{N+1}(-s)>0$. Hence, the leading principal minors of $J_s$ are all positive and the entries on the subdiagonal a superdiagonal are positive. Thus, we conclude, attending to Theorem \ref{teo:Jacobi oscillatory}, that $J_s$ is an oscillatory matrix.
	
	The positive bidiagonal factorization is a consequence of Theorem \ref{teo:LU} applied to $J_s$ for $s>b$.
\end{proof}

The Favard spectral theorem, see \cite{Simon}, ensures for a Jacobi matrix  $J$ the existence of a unique probability measure $\d\psi$ such that
$\int P_n(x) x^m \d\psi(x)=0$, for $m\in\{0,\dots, n-1\}$, that is the characteristic polynomials are orthogonal polynomials, and moreover
$\int x^n\d\psi(x)=(J^n)_{0,0}$. Thus, we see that the tridiagonal matrices to which the classical spectral Favard theorem applies are equivalently described as bounded tridiagonal matrices that after a convenient translation admit a positive bidiagonal factorization. 
\subsection{Mixed multiple orthogonal polynomials on the step-line}

	Mixed multiple orthogonal polynomials were first introduced in 1994 by Sorokin \cite{sorokin1}, and further extended in 1997 by him and van Iseghem in \cite{Sorokin_Van_Iseghem_1} when studying matrix orthogonality of vector polynomials. Ten years later, in 2004, it was rediscovered by Daems and Kuijlaars \cite{Evi_Arno} in the context of multiple non-intersecting Brownian motions, where the name mixed multiple orthogonal was coined. It has been discussed also in \cite{adler, afm, mixto_FUG,Ulises-Sergio-Judith}. Some of the forementioned papers deal  $q\times p$ rectangular matrix of weights of rank 1 at each point of the support. However, the most fitted version for the discussion in this paper are the ones in \cite{Sorokin_Van_Iseghem_1} and \cite{Ulises-Sergio-Judith} in where a $q\times p$ rectangular matrix of functionals or measures, respectively, are considered.

Let us present the mixed multiple orthogonal polynomials, in the step line, as they appears from the $LU$ factorizations of a matrix of moments, following the ideas presented in \cite{afm}. 
\begin{defi}[Matrix of measures]
	Let us consider a matrix of functions, which are right continuous and of bounded variation in a closed interval $\Delta$,
$\Psi=\begin{bNiceMatrix}[small]
		\psi_{1,1}&\Cdots &\psi_{1,p}\\
		\Vdots & & \Vdots\\
		\psi_{q,1}&\Cdots &\psi_{q,p}
		\end{bNiceMatrix}$
and the associated matrix of  Lebesgue--Stieltjes measures
$\d\Psi=\begin{bNiceMatrix}[small]
		\d\psi_{1,1}&\Cdots &\d\psi_{1,p}\\
		\Vdots & & \Vdots\\
		\d\psi_{q,1}&\Cdots &\d\psi_{q,p}
	\end{bNiceMatrix}$.
\end{defi}
\begin{defi}[Monomial matrices]
	Given  $r\in\N$, we consider the semi-infinite matrices of monomials
\begin{align*}
	X_{[r]}\coloneq\begin{bNiceMatrix}
I_r\\
xI_r\\
x^2I_r\\
\Vdots	
\end{bNiceMatrix},
\end{align*}
that, denoting its columns by $X^{(j)}_{[r]}$, we can write 
$X_{[r]}=\begin{bNiceMatrix}[small]
		X^{(1)}_{[r]} &\Cdots & 	X^{(r)}_{[r]} 
	\end{bNiceMatrix}$.
\end{defi}
\begin{defi}[Shift matrices]
	The shift matrix is given by
$	\ell_{[r]}\coloneq \left[\begin{NiceMatrix}[small,columns-width=auto]
		0_r & I_r & 0_r&\Cdots&\\
		0_r & 0_r &I_r &\Ddots&\\
		\Vdots&\Ddots&\Ddots& \Ddots&\\
		&&& &
	\end{NiceMatrix}\right]$.
\end{defi}
\begin{lemma}
Shift matrices act by left multiplication on monomial matrices according to
\begin{align*}
	\ell_{[r]}X_{[r]}&=x X_{[r]}(x), & 	\ell_{[r]}X^{(j)}_{[r]}&=x X^{(j)}_{[r]}(x), & j&\in\{1,\dots,r\}.
\end{align*}
\end{lemma}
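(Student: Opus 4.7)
The plan is a direct block-matrix computation. I would index the block rows/columns of $\ell_{[r]}$ and $X_{[r]}$ starting at $0$, so that the $(i,j)$-th block entry of $\ell_{[r]}$ is $\delta_{j,i+1}\,I_r$ and the $i$-th block row of $X_{[r]}$ is $x^{i}\,I_r$. The first step is then to compute block by block:
\begin{align*}
\bigl(\ell_{[r]} X_{[r]}\bigr)_{i}
=\sum_{j\geqslant 0}\bigl(\ell_{[r]}\bigr)_{i,j}\,\bigl(X_{[r]}\bigr)_{j}
=\bigl(X_{[r]}\bigr)_{i+1}
=x^{i+1}\,I_r
=x\cdot x^{i}\,I_r
=x\,\bigl(X_{[r]}\bigr)_{i}.
\end{align*}
Since this holds for every block row $i\in\N_0$, I conclude that $\ell_{[r]}X_{[r]}(x)=xX_{[r]}(x)$ as an identity of semi-infinite block matrices.

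The second step is to derive the column version. Because left multiplication by $\ell_{[r]}$ acts row-wise on the matrix $X_{[r]}$ while $X^{(j)}_{[r]}$ is obtained by selecting the $j$-th column, the equality $\ell_{[r]}X_{[r]}=xX_{[r]}$ restricts column-by-column to give $\ell_{[r]}X^{(j)}_{[r]}=xX^{(j)}_{[r]}$ for each $j\in\{1,\dots,r\}$. This uses only the basic compatibility of matrix multiplication with column extraction.

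I do not anticipate any real obstacle: the lemma is a formal verification of the fact that the block-superdiagonal shift moves each block of $X_{[r]}$ one step down (in the sense of indices), which corresponds exactly to multiplication of each block by $x$. The only mild care is bookkeeping of block sizes and the index shift, both of which are transparent from the explicit matrix displays above.
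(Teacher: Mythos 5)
Your computation is correct: the block-row identity $(\ell_{[r]}X_{[r]})_i=(X_{[r]})_{i+1}=x\,(X_{[r]})_i$ and the column-extraction argument are exactly the immediate verification that the paper leaves implicit, since it states this lemma without proof. Nothing is missing.
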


\begin{lemma}
	If we denote by 
$	\ell\coloneq\ell_{[1]}=\left[\begin{NiceMatrix}[small,columns-width=auto]
	0 & 1 & 0&\Cdots&\\
	0& 0&1 &\Ddots&\\
	\Vdots&\Ddots&\Ddots& \Ddots&\\
	&&& &
\end{NiceMatrix}\right]$  we have $\ell_{[r]}=\ell^r$.
\end{lemma}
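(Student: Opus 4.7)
The plan is to verify the identity $\ell_{[r]}=\ell^r$ by computing the scalar entries of both sides (viewed as semi-infinite matrices indexed by $\N_0$) and checking they coincide.

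First I would unfold the block description of $\ell_{[r]}$ into scalar coordinates. Writing each index $i\in\N_0$ uniquely as $i=ar+s$ with $a\in\N_0$ and $s\in\{0,\dots,r-1\}$, the block at block-position $(a,b)$ of $\ell_{[r]}$ equals $I_r$ if $b=a+1$ and $0_r$ otherwise. Therefore
\begin{align*}
(\ell_{[r]})_{ar+s,\, br+t}=\delta_{b,a+1}\,\delta_{s,t},
\end{align*}
which, after the substitution $i=ar+s$, $j=br+t$, is equivalent to $(\ell_{[r]})_{i,j}=\delta_{j,i+r}$. In other words, $\ell_{[r]}$ has ones on the $r$-th superdiagonal and zeros elsewhere.

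Next I would compute the entries of $\ell^r$. Since $(\ell)_{i,j}=\delta_{j,i+1}$, a short induction on $r$ yields $(\ell^r)_{i,j}=\delta_{j,i+r}$: the base case $r=1$ is the definition, and for the inductive step
\begin{align*}
(\ell^{r+1})_{i,j}=\sum_{k\in\N_0}(\ell)_{i,k}(\ell^r)_{k,j}=(\ell^r)_{i+1,j}=\delta_{j,i+r+1}.
\end{align*}
(Each sum has exactly one surviving term, so no convergence issue arises.)

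Comparing the two expressions entrywise gives $\ell_{[r]}=\ell^r$. There is no real obstacle here; the only point requiring care is the translation between the block indexing $(a,s)\leftrightarrow i=ar+s$ used to define $\ell_{[r]}$ and the plain scalar indexing used for powers of $\ell$.
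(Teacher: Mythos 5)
Your proof is correct: both $\ell_{[r]}$ and $\ell^r$ are identified entrywise as the matrix with ones exactly on the $r$-th superdiagonal, $(\,\cdot\,)_{i,j}=\delta_{j,i+r}$, and the block-to-scalar index translation $i=ar+s$ is handled properly, with no convergence issue since each row of $\ell$ has a single nonzero entry. The paper states this lemma without proof, treating it as immediate, and your verification is precisely the routine computation the authors implicitly rely on.
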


\begin{defi}[Moment matrix]
	The matrix of moments is given by
\begin{align*}
	\mathcal M&\coloneq \int_{\Delta } X_{[q]}(x)
	\d\Psi(x)\big(X_{[p]}(x)\big)^\top=\bigintss_\Delta\,\left[\begin{NiceMatrix}
		\d\Psi(x) & x\d\Psi(x) &x^2\d\Psi(x)&\Cdots\\
		x\d\Psi(x) & x^2\d\Psi(x) &x^3\d\Psi(x)&\Cdots\\
	x^2	\d\Psi(x) & x^3\d\Psi(x) &x^4\d\Psi(x)&\Cdots\\
	\Vdots &\Vdots &\Vdots&
	\end{NiceMatrix}\right]\\&=
\bigint_\Delta\;\,
 \left[\def\arraystretch{1.5}
\begin{NiceArray}{ccc|ccc|w{c}{1pt}W{c}{1cm}c}[]
	\d \psi_{1,1}(x) & \Cdots & \d \psi_{1,p}(x) &x \d \psi_{1,1} (x)& \Cdots&x \d \psi_{1,p} (x)& &&\\
	\Vdots & & \Vdots &\Vdots & & \Vdots &&\Hdotsfor{2}\\
		\d \psi_{q,1}(x) & \Cdots & \d \psi_{q,p}(x) &x \d \psi_{q,1}(x) & \Cdots &x \d \psi_{q,p} (x) & &&\\\hline
	x\d \psi_{1,1}(x) & \Cdots & x\d \psi_{1,p}(x) &x^2 \d \psi_{1,1} & \Cdots &x^2 \d \psi_{1,p} (x) &&& \\
	\Vdots & & \Vdots&\Vdots & & \Vdots& &\Hdotsfor{2}\\
	x\d \psi_{q,1}(x) & \Cdots & x\d \psi_{q,p} (x) &x^2 \d \psi_{q,1} (x) & \Cdots &x^2 \d \psi_{q,p} (x) & & & \\\hline
	 & \Vdotsfor{1} & & & \Vdotsfor{1} & & & &
\end{NiceArray}
\right].
\end{align*}
\end{defi}

\begin{lemma}
	The moment matrices is a structured matrix built up with $q\times p$ blocks $\mathcal M_{n,m}\coloneq\int_{\Delta}x^{n+m}\DJ\d\Psi(x)\in\R^{q\times p}$. In fact, is Hankel by blocks, i.e. $\mathcal M_{n+1,m}=	\mathcal M_{n,m+1}$. 
\end{lemma}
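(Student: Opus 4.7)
The plan is to read off the block structure directly from the definitions of $X_{[q]}$, $X_{[p]}$, and the matrix of measures $\d\Psi$, and then verify the Hankel-by-blocks identity by matching the exponents of $x$.

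First I would unpack the definition of the monomial matrix. The block rows of $X_{[q]}(x)$ are $x^n I_q$ for $n\in\N_0$, and the block columns of $\big(X_{[p]}(x)\big)^{\!\top}$ are $x^m I_p$ for $m\in\N_0$. Substituting into
\begin{align*}
	\mathcal M=\int_{\Delta} X_{[q]}(x)\,\d\Psi(x)\,\big(X_{[p]}(x)\big)^{\!\top},
\end{align*}
and using the linearity of the integral together with $(x^n I_q)\,\d\Psi(x)\,(x^m I_p)=x^{n+m}\,\d\Psi(x)$, the $(n,m)$-block is immediately identified as
\begin{align*}
	\mathcal M_{n,m}=\int_{\Delta} x^{n+m}\,\d\Psi(x)\in\R^{q\times p},
\end{align*}
which is well defined since $\d\Psi$ is a $q\times p$ matrix of Lebesgue--Stieltjes measures on the closed interval $\Delta$ and $x^{n+m}$ is bounded there. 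This gives precisely the block pattern displayed in the definition of $\mathcal M$.

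For the Hankel-by-blocks property, I would simply observe that $\mathcal M_{n,m}$ depends only on the sum $n+m$ of its indices. Hence
\begin{align*}
	\mathcal M_{n+1,m}=\int_{\Delta} x^{(n+1)+m}\,\d\Psi(x)=\int_{\Delta} x^{n+(m+1)}\,\d\Psi(x)=\mathcal M_{n,m+1},
\end{align*}
which is the claimed identity. The only thing to check, and the closest thing to an obstacle, is that the formal manipulation of the semi-infinite matrix product $X_{[q]}\,\d\Psi\,X_{[p]}^{\!\top}$ block by block is legitimate; this is immediate because each block is a finite matrix obtained by integrating a single scalar-valued polynomial against the finite matrix-valued measure $\d\Psi$, with no issues of summation or interchange of limits involved.
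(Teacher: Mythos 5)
Your proof is correct and matches the paper's intent: the lemma is stated there without proof precisely because the block form $\mathcal M_{n,m}=\int_{\Delta}x^{n+m}\,\d\Psi(x)$ is already visible in the displayed definition of the moment matrix, and your exponent-matching argument for $\mathcal M_{n+1,m}=\mathcal M_{n,m+1}$ is the same immediate observation.
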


This fact can be reformulated as follows:

\begin{pro}
	The moment matrix satisfies
\begin{align}\label{eq:symmetry_mixed}
	\ell_{[q]} 	\mathcal M=	\mathcal M(\ell_{[p]})^\top.
\end{align}
\end{pro}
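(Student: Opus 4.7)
The plan is to exploit the lemma $\ell_{[r]} X_{[r]} = x X_{[r]}$ directly inside the integral defining $\mathcal{M}$, together with the fact that $x$ is a scalar and therefore commutes past the matrix factors. The statement is really just a reformulation of the block Hankel property $\mathcal{M}_{n+1,m} = \mathcal{M}_{n,m+1}$ stated in the preceding lemma.

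First I would compute the left-hand side by pulling $\ell_{[q]}$ inside the integral: since the integration is entrywise,
\begin{align*}
\ell_{[q]}\mathcal{M} = \int_\Delta \bigl(\ell_{[q]} X_{[q]}(x)\bigr)\,\d\Psi(x)\,\bigl(X_{[p]}(x)\bigr)^\top = \int_\Delta x X_{[q]}(x)\,\d\Psi(x)\,\bigl(X_{[p]}(x)\bigr)^\top.
\end{align*}
Next I would treat the right-hand side in the same way, using that $\bigl(X_{[p]}(x)\bigr)^\top(\ell_{[p]})^\top = \bigl(\ell_{[p]} X_{[p]}(x)\bigr)^\top = \bigl(x X_{[p]}(x)\bigr)^\top = x\bigl(X_{[p]}(x)\bigr)^\top$, so
\begin{align*}
\mathcal{M}(\ell_{[p]})^\top = \int_\Delta X_{[q]}(x)\,\d\Psi(x)\,\bigl(X_{[p]}(x)\bigr)^\top(\ell_{[p]})^\top = \int_\Delta x X_{[q]}(x)\,\d\Psi(x)\,\bigl(X_{[p]}(x)\bigr)^\top.
\end{align*}
Since $x$ is a scalar, both expressions coincide and the identity follows.

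There is really no obstacle here; the only thing worth checking carefully is that the left multiplication by $\ell_{[q]}$ and the right multiplication by $(\ell_{[p]})^\top$ can both be moved past the integral sign, which is immediate as these are constant (i.e.\ $x$-independent) semi-infinite matrices, and that the block-structured product is associative when applied to the semi-infinite monomial matrices (each row/column sum involved is finite because $\ell_{[q]}$ and $(\ell_{[p]})^\top$ are banded). Alternatively, and perhaps more transparently, one may verify the identity blockwise: the $(n,m)$-block of $\ell_{[q]}\mathcal{M}$ is $\mathcal{M}_{n+1,m}$, while the $(n,m)$-block of $\mathcal{M}(\ell_{[p]})^\top$ is $\mathcal{M}_{n,m+1}$, and the Hankel-by-blocks property from the preceding lemma makes these equal.
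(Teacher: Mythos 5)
Your proof is correct and follows essentially the same route as the paper: apply the lemma $\ell_{[r]}X_{[r]}=xX_{[r]}$ inside the integral and use that the scalar $x$ commutes past $\d\Psi(x)$ and $\big(X_{[p]}(x)\big)^\top$ to transfer the shift to the right factor as $(\ell_{[p]})^\top$. Your extra remarks on interchanging the banded shift matrices with the integral and the blockwise (Hankel) reformulation are fine but not needed beyond what the paper's argument already uses.
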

\begin{proof}
	It follows from
\begin{align*}
	\ell_{[q]} 	\mathcal M&=\int_{\Delta } \ell_{[q]}X_{[q]}(x)\d\Psi(x)(X_{[p]}(x))^\top=\int_{\Delta } x X_{[q]}(x)\d\Psi(x)(X_{[p]}(x))^\top\\&=\int_{\Delta } X_{[q]}(x)\d\Psi(x)(X_{[p]}(x))^\top(\ell_{[p]})^\top.
\end{align*}
\end{proof}

Now, let us assume that the moment matrix $	\mathcal M$ has a Gauss--Borel factorization, i.e. $	\mathcal M=L^{-1}U^{-1}$, where $L$ is lower triangular and $U$ upper triangular, such that none of the diagonal entries in both triangular matrices are zero, and the respective inverse matrices makes sense. It is well known that such $LU$ factorizations do exist whenever the leading principal submatrices are nonsingular.
\begin{defi}[Matrix polynomials]
	Let us consider the matrices
\begin{align*}
	B&\coloneq L X_{[q]}=\left[\begin{NiceMatrix}
		\mathscr B_0\\ \mathscr B_1\\\Vdots
	\end{NiceMatrix}\right],&
	A&\coloneq (X_{[p]})^\top U=\left[\begin{NiceMatrix}
		\mathscr A_0 & \mathscr A_1 & \Cdots
	\end{NiceMatrix}\right],
\end{align*}
where $\mathscr B_n(x)$ is a $q\times q$ matrix polynomial and $\mathscr A_n(x)$ is a $p\times p$ matrix polynomial.
Observe that
\begin{align*}
B&=\begin{bNiceMatrix}
		B^{(1)} &\Cdots &B^{(q)}
\end{bNiceMatrix},& B^{(b)}&=L X_{[q]}^{(b)}, & b&\in\{1,\dots,q\},\\
A&=\begin{bNiceMatrix}
	A^{(1)} \\\Vdots \\A^{(p)}
\end{bNiceMatrix}, & A^{(a)}&= (X_{[p]}^{(a)})^\top U, & a&\in\{1,\dots,p\}.
\end{align*}
\end{defi}
\begin{lemma}\label{lemma:blocks_mixed_multiple_orthogonality}
	The semi-infinite vectors
\begin{align*}
	B^{(b)}&=\begin{bNiceMatrix}
		B^{(b)}_0\\[2pt]
		B^{(b)}_1\\
		\Vdots
	\end{bNiceMatrix}, & 	A^{(a)}=\left[\begin{NiceMatrix}
	A^{(a)}_0&
	A^{(a)}_1 &
	\Cdots
\end{NiceMatrix}\right], 
\end{align*}
have as entries polynomials with degrees
\begin{align}\label{eq:degrees}
\deg B^{(b)}_{n} &=\left\lceil\frac{n+2-b}{q} \right\rceil-1 , 	&\deg A^{(a)}_{n} =\left\lceil\frac{n+2-a}{p} \right\rceil-1.
\end{align}
For the block polynomials we have
\begin{align*}
\mathscr	B_n&=\begin{bNiceMatrix}[columns-width=.5cm]
		B^{(1)}_{nq}&\Cdots & B^{(q)}_{nq}\\
		\Vdots & &\Vdots\\
		B^{(1)}_{nq+q-1}&\Cdots&B^{(q)}_{nq+q-1}
	\end{bNiceMatrix}, & 
\mathscr	A_n&=\begin{bNiceMatrix}
	A^{(1)}_{np}&\Cdots&& A^{(1)}_{np+p-1}\\
	\Vdots & &&\Vdots\\\\
A^{(p)}_{np}&\Cdots &&A^{(p)}_{np+p-1}
\end{bNiceMatrix}.
\end{align*}
\end{lemma}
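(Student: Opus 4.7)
The statement is proved by unpacking the definitions $B=L X_{[q]}$ and $A=(X_{[p]})^\top U$ and tracking indices through the triangularity of $L$ and $U$. First, expand componentwise: the $k$-th entry of the column $X_{[q]}^{(b)}$ equals $x^j$ exactly when $k=b-1+jq$, $j\in\N_0$, and vanishes otherwise; combining this with $L_{n,k}=0$ for $k>n$ gives
\begin{align*}
B^{(b)}_n(x) \;=\; \sum_{\substack{j\in\N_0\\ b-1+jq\leq n}} L_{n,\,b-1+jq}\,x^{j}.
\end{align*}
The largest admissible $j$ is $j_{\max}=\lfloor(n+1-b)/q\rfloor$. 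The elementary identity $\lfloor m/q\rfloor=\lceil(m+1)/q\rceil-1$ applied with $m=n+1-b$ yields $j_{\max}=\lceil(n+2-b)/q\rceil-1$, giving the upper bound for $\deg B^{(b)}_n$ asserted in \eqref{eq:degrees}.

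For sharpness, when $n\equiv b-1\pmod q$ the leading coefficient is $L_{n,n}\neq 0$ (the diagonal entries of $L$ are nonzero by hypothesis), so the degree is exactly $j_{\max}$. In the remaining cases the leading coefficient is the off-diagonal entry $L_{n,\,b-1+j_{\max}q}$, and its nonvanishing follows from the nonsingularity of the relevant leading principal submatrices of $\mathcal M$ (precisely the hypothesis behind the Gauss--Borel factorization), used together with the Hankel-by-blocks symmetry \eqref{eq:symmetry_mixed} and a Cramer-type expression identifying that entry as a ratio of two such minors.

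The block description of $\mathscr B_n$ is then immediate from the stacking $B=[\mathscr B_0^\top\;\mathscr B_1^\top\;\cdots]^\top$: the $n$-th block $\mathscr B_n$ occupies rows $nq,\dots,nq+q-1$ and columns $1,\dots,q$ of $B$, so its $(i+1,b)$ entry is $B^{(b)}_{nq+i}$, exactly as in the display. The counterparts for $A^{(a)}_n$ and $\mathscr A_n$ follow by a dual calculation: expanding $A=(X_{[p]})^\top U$ using $U_{k,n}=0$ for $k>n$ yields the symmetric formula
\begin{align*}
A^{(a)}_n(x) \;=\; \sum_{\substack{j\in\N_0\\ a-1+jp\leq n}} U_{\,a-1+jp,\,n}\,x^{j},
\end{align*}
and the row-stacking $A=[\mathscr A_0\;\mathscr A_1\;\cdots]$ recovers the block description of $\mathscr A_n$ with $p$ replacing $q$.

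The main obstacle is the sharpness of the degree estimate when $n\not\equiv b-1\pmod q$: the upper bound is a routine consequence of triangularity and the combinatorics of $X_{[q]}$, but the matching lower bound ultimately rests on the nonvanishing of the leading principal minors of $\mathcal M$, which is exactly the hypothesis that underpins the Gauss--Borel factorization itself. Everything else amounts to a straightforward reorganization of the defining matrix products.
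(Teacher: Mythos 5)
Your expansion of $B^{(b)}_n$ and $A^{(a)}_n$ through the triangularity of $L$ and $U$, the resulting upper bound $\deg B^{(b)}_n\leqslant\lceil (n+2-b)/q\rceil-1$ (and its analogue for $A^{(a)}_n$), the exactness in the case $n\equiv b-1 \pmod q$ via the nonzero diagonal entries of $L$, and the identification of the blocks $\mathscr B_n$, $\mathscr A_n$ are all correct; the paper itself states this lemma without proof, and this is indeed the intended routine computation.

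The genuine gap is exactly where you located it, and your proposed fix does not work. The leading coefficient in the case $n\not\equiv b-1\pmod q$ is the off-diagonal entry $L_{n,\,b-1+j_{\max}q}$, and entries of the triangular factors of $\mathcal M=L^{-1}U^{-1}$ are \emph{not} ratios of leading principal minors: in the $LDU$ normalization the $(i,j)$ entry of the lower factor is $\det\mathcal M\big[\{0,\dots,j-1,i\},\{0,\dots,j\}\big]\big/\det\mathcal M\big[\{0,\dots,j\},\{0,\dots,j\}\big]$, whose numerator is a non-principal minor. Its nonvanishing is an additional normality (perfectness) condition on the matrix of measures and is not implied by the existence of the Gauss--Borel factorization, nor by the Hankel-by-blocks symmetry. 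Concretely, take $q=2$, $p=1$: then $B^{(1)}_1=L_{1,0}$, and $L_{1,0}$ vanishes if and only if $\mathcal M_{1,0}=\int_\Delta \d\psi_{2,1}=0$, which can happen for a signed measure $\d\psi_{2,1}$ of zero total mass while every leading principal minor of $\mathcal M$ is nonzero (e.g.\ zeroth moment $0$, first moment $1$, the rest generic); in that case $B^{(1)}_1\equiv 0$ although \eqref{eq:degrees} would assign it degree $0$. So under the stated hypotheses only the inequality $\deg B^{(b)}_n\leqslant\lceil (n+2-b)/q\rceil-1$ (with equality when $n\equiv b-1\pmod q$, and dually for $A^{(a)}_n$) can be proved; attaining the degrees in all cases requires the nonvanishing of the corresponding non-principal minors of $\mathcal M$ (equivalently, normality of the associated mixed Hermite--Padé indices), a hypothesis you would need to state rather than derive from the leading principal minors.
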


\begin{lemma}\label{lemma:degrees}
	\begin{enumerate}
		\item For $r,n\in\N$, $n\geqslant r$, we have
		\begin{align}\label{eq:ceil}
			\sum_{a=1}^r	\left\lceil\frac{n+1-a}{r} \right\rceil = n.
		\end{align}
	\item For the degrees we find $
		\sum_{b=1}^q (\deg B^{(b)}_{n}+1) =\sum_{a=1}^p (\deg A^{(a)}_{n}+1)= n+1$.
	\end{enumerate}
\end{lemma}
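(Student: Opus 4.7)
The plan is to prove (i) by a direct counting of residue classes and then to read off (ii) as an immediate specialisation.

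For (i), apply Euclidean division to write $n=kr+t$ with $0\leqslant t\leqslant r-1$; the hypothesis $n\geqslant r$ guarantees $k\geqslant 1$. As $a$ ranges over $\{1,\dots,r\}$, the numerator $n+1-a$ runs over the $r$ consecutive integers $n-r+1,n-r+2,\dots,n$. Exactly $t$ of these, namely $kr+1,kr+2,\dots,kr+t$, lie strictly above $kr$ and thus satisfy $\lceil (n+1-a)/r\rceil=k+1$; the remaining $r-t$, namely $(k-1)r+t+1,\dots,kr$, satisfy $\lceil (n+1-a)/r\rceil=k$. Adding these contributions yields
\begin{align*}
\sum_{a=1}^{r}\left\lceil\frac{n+1-a}{r}\right\rceil = t(k+1)+(r-t)k = kr+t = n,
\end{align*}
which is precisely \eqref{eq:ceil}.

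For (ii), rewrite the degree formulas in \eqref{eq:degrees} in the suggestive form
\begin{align*}
\deg B^{(b)}_n + 1 &= \left\lceil\frac{(n+1)+1-b}{q}\right\rceil, &
\deg A^{(a)}_n + 1 &= \left\lceil\frac{(n+1)+1-a}{p}\right\rceil.
\end{align*}
Applying part (i) with $n$ replaced by $n+1$ and $r$ equal to $q$ or $p$, respectively, gives
$\sum_{b=1}^{q}(\deg B^{(b)}_n+1)=n+1$ and $\sum_{a=1}^{p}(\deg A^{(a)}_n+1)=n+1$, valid as soon as $n+1\geqslant q$ and $n+1\geqslant p$.

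The argument presents no essential technical difficulty; the only delicate point is the edge regime $n+1<\max(p,q)$, where \eqref{eq:degrees} produces a negative value, to be read with the convention that the corresponding $B^{(b)}_n$ or $A^{(a)}_n$ is the zero polynomial. In that degenerate range the summation in (ii) must be truncated accordingly, but this does not affect the subsequent developments, which concern $n$ large enough for \eqref{eq:ceil} to apply directly.
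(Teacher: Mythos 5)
Your proof is correct and follows essentially the same route as the paper: Euclidean division $n=kr+t$, counting that exactly $t$ terms have ceiling $k+1$ and $r-t$ have ceiling $k$, and then part (ii) by substituting $n\mapsto n+1$ into \eqref{eq:degrees} (the paper states this last step in one line, which your computation makes explicit). Your closing remark about the regime $n+1<\max(p,q)$ is harmless but unnecessary; in fact, with the ceiling formula the identity $\sum_{b}(\deg B^{(b)}_n+1)=n+1$ persists there without truncation, since the terms corresponding to zero polynomials contribute $0$.
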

\begin{proof}
\begin{enumerate}
	\item 	Let us consider $n = kr +j$, where $j = 0, \dots r-1$.
For $a=1, \dots, j$, we have that
\begin{align*}
	\left\lceil\frac{n+1-a}{r} \right\rceil = \left\lceil\frac{kr +j+1-a}{r} \right\rceil = k + \left\lceil\frac{ j+1-a}{r} \right\rceil 
\end{align*}
and we get $n_a = k + 1$.
For $a=j+1, \dots, r$, we find
\begin{align*}
	\left\lceil\frac{n+1-a}{r} \right\rceil = \left\lceil\frac{kr +j+1-a}{r} \right\rceil = \left\lceil\frac{(k-1)r+ r-a+j+1)}{r} \right\rceil= (k-1) + \left\lceil\frac{ r-a+j+1)}{r} \right\rceil 
\end{align*}
and, consequently, $n_a = k$. Therefore, Equation \eqref{eq:ceil} follows.

\item It follows from the previous result and \eqref{eq:degrees}.
\end{enumerate}
\end{proof}

\begin{pro}[Biorthogonality]
	The following biorthogonality holds
	\begin{align*}
		\int_\Delta B(x)\d\Psi (x) A(x)&=I, 
	\end{align*}
that entrywise can be written
\begin{align}\label{eq:mixed_multiple_biorthogonality}
	\sum_{b=1}^q\sum_{a=1}^p\int_\Delta B^{(b)}_n(x)\d\psi_{b,a} (x) A^{(a)}_m(x)&=\delta_{n,m}, & n,m\in\N_0.
\end{align}
\end{pro}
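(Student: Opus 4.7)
The plan is to directly substitute the definitions of the polynomial matrices $B$ and $A$, pull the constant matrices $L$ and $U$ outside the integral, recognize the moment matrix $\mathcal M$ inside, and then apply the Gauss--Borel factorization. Since $L$ and $U$ have $x$-independent entries, substituting $B=LX_{[q]}$ and $A=(X_{[p]})^\top U$ yields
\begin{align*}
\int_\Delta B(x)\,\d\Psi(x)\,A(x)=L\left(\int_\Delta X_{[q]}(x)\,\d\Psi(x)\,(X_{[p]}(x))^\top\right)U.
\end{align*}
The bracketed integral is, by definition, the moment matrix $\mathcal M$, so the right-hand side collapses to $L\mathcal M U=L L^{-1}U^{-1}U=I$, which is the first claimed identity.

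For the entrywise version I would simply read off the $(n,m)$-entry of this matrix product. By Lemma \ref{lemma:blocks_mixed_multiple_orthogonality}, row $n$ of $B$ is the row vector of polynomials $(B^{(1)}_n,\dots,B^{(q)}_n)$, column $m$ of $A$ is the column vector $(A^{(1)}_m,\dots,A^{(p)}_m)^\top$, and the $(b,a)$-entry of $\d\Psi$ is $\d\psi_{b,a}$. Multiplying these out gives
\begin{align*}
\bigl(B(x)\,\d\Psi(x)\,A(x)\bigr)_{n,m}=\sum_{b=1}^q\sum_{a=1}^p B^{(b)}_n(x)\,\d\psi_{b,a}(x)\,A^{(a)}_m(x),
\end{align*}
and integrating and equating with $I_{n,m}=\delta_{n,m}$ produces \eqref{eq:mixed_multiple_biorthogonality}.

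I do not expect any genuine obstacle; the proposition is essentially a direct translation of the Gauss--Borel factorization $\mathcal M=L^{-1}U^{-1}$ into the polynomial language provided by the definitions of $B$ and $A$. The only point that deserves care is that one is manipulating semi-infinite matrices: however, because $L$ is lower triangular and $U$ upper triangular both with nonzero diagonal entries, every entry of the products $LX_{[q]}$, $(X_{[p]})^\top U$ and $L\mathcal M U$ is a finite sum, so the formal associativity used above is fully justified.
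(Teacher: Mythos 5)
Your argument is correct and coincides with the paper's own proof: both substitute $B=LX_{[q]}$, $A=(X_{[p]})^\top U$, recognize the moment matrix, and invoke the Gauss--Borel factorization $\mathcal M=L^{-1}U^{-1}$ to obtain the identity, then read off the entries. Your extra remark on the finiteness of the entries of the semi-infinite products due to the triangular structure of $L$ and $U$ is a sensible justification of the formal manipulation, but does not change the route.
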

\begin{proof}
	From the definition of the moment matrix $\mathcal M$ and the Gauss--Borel factorization $\mathcal M=L^{-1}U^{-1}$ we get
	\begin{align*}
		\int_{\Delta} X_{[q]}(x)\d\Psi (x)\big(X_{[p]}(x)\big)^\top=L^{-1} U^{-1}.
		\end{align*}
	By left and right multiplication by the triangular factors $L$ and $U$, respectively, we get
		\begin{align*}
		\int_{\Delta} L X_{[q]}(x)\d\Psi (x)\big(X_{[p]}(x)\big)^\top U=I
	\end{align*}
and recalling the definition of $B$ and $A$ we deduce that
$\int_{\Delta}B(x)\d\Psi (x)A(x)=I$,
and the result follows immediately.
\end{proof}
\begin{rem}[Matrix biorthogonality]
\begin{enumerate}
	\item 	If $p=q$, we recover the well-known matrix biorthogonality, see \cite{damanik,sinap,Duran_Grunbaum} for matrix orthogonality,
		\begin{align*}
		\int_\Delta \mathscr B_n(x)\d\Psi (x) \mathscr A_m(x)&=\delta_{n,m}I_p, & n,m\in\N_0.
	\end{align*}
\item For $p\neq q$, this matrix orthogonality is lost. However, if we denote for $r\in\N$ by $\mathscr B^{[r]}_n$ the $r\times q$ matrix of polynomials obtained from the $\infty\times q$ matrix $B$ by taking consecutive submatrices of size $r\times q$, and similarly for $\mathscr A^{[r]}_n$, i.e., obtained from
the $p\times \infty$ matrix $A$ consecutive submatrices of size $p\times r$, we get the following generalized matrix biorthogonality
\begin{align*}
	\int_{\Delta } \mathscr B_n^{[r]}\d\Psi(x)\mathscr A_m^{[r]}(x)&=\delta_{n,m} I_r , & n,m&\in\N_0.
\end{align*}
\end{enumerate}
\end{rem}

From biorthogonality \eqref{eq:mixed_multiple_biorthogonality} we get mixed multiple orthogonal relations as follows:
\begin{coro}[Mixed multiple orthogonality]\label{coro:mixed_multiple_orthogonality}
	The following orthogonality relations 
\begin{align*}
		\sum_{a=1}^p \int_\Delta A^{(a)}_{n} (x)\d \psi_{b,a} (x)x^m &=0, &k &\in\left\{ 0 ,\dots, \deg B^{(b)}_{n-1}\right\}, & b&\in\{1,\dots,q\},\\
		\sum_{b=1}^q 	\int _\Delta B^{(b)}_{n} (x)\d \psi_{b,a}(x) x^m &=0, &k &\in\left\{0 ,\dots, \deg A^{(a)}_{n-1}\right\},& a&\in\{1,\dots,p\}.
\end{align*}
are satisfied.
\end{coro}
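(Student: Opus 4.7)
My plan is to derive both orthogonality families by splitting the biorthogonality $\int_\Delta B\,\d\Psi\, A = I$ into two intermediate one-sided matrix identities. Starting from the Gauss--Borel factorization $\mathcal M = L^{-1} U^{-1}$, left multiplication by $L$ (absorbing it into $X_{[q]}$ via $L X_{[q]} = B$) gives
\begin{align*}
\int_\Delta B(x)\,\d\Psi(x)\,\big(X_{[p]}(x)\big)^\top &= U^{-1},
\end{align*}
while right multiplication by $U$ similarly gives $\int_\Delta X_{[q]}(x)\,\d\Psi(x)\,A(x) = L^{-1}$. Since $L^{-1}$ is lower triangular and $U^{-1}$ is upper triangular, every entry on the wrong side of the diagonal must vanish, and this one-sided vanishing is precisely the mechanism that will deliver the two stated orthogonality relations.

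Next I would read off scalar entries of these matrix identities. Column $b$ of $X_{[q]}$ carries the monomial $x^m$ at row $mq + b - 1$ and zeros elsewhere, and symmetrically for $X_{[p]}$. Therefore the $(mq+b-1,\,n)$ entry of $\int X_{[q]}\,\d\Psi\,A$ equals $\sum_{a=1}^p \int_\Delta A^{(a)}_n(x)\,\d\psi_{b,a}(x)\,x^m$, while the $(n,\,mp+a-1)$ entry of $\int B\,\d\Psi\,(X_{[p]})^\top$ equals $\sum_{b=1}^q \int_\Delta B^{(b)}_n(x)\,\d\psi_{b,a}(x)\,x^m$. By the two intermediate identities these scalars equal $(L^{-1})_{mq+b-1,\,n}$ and $(U^{-1})_{n,\,mp+a-1}$ respectively, and hence vanish as soon as $mq+b-1 < n$ (respectively $mp+a-1 < n$).

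The final step is a purely combinatorial degree check. Using the ceiling formula from Lemma \ref{lemma:blocks_mixed_multiple_orthogonality}, the inequality $mq + b - 1 < n$ is equivalent to $m \leqslant \lceil (n+1-b)/q \rceil - 1 = \deg B^{(b)}_{n-1}$, and analogously $mp + a - 1 < n$ is equivalent to $m \leqslant \deg A^{(a)}_{n-1}$. These are precisely the ranges of $m$ appearing in the statement, so the two orthogonality families follow at once. I do not foresee an analytic obstacle beyond the already-assumed existence of the Gauss--Borel factorization; the only delicate point is this bookkeeping step, ensuring that the step-line index $mq + b - 1$ of each monomial aligns correctly with the triangularity cutoff of $L^{-1}$ (and similarly $mp + a - 1$ with that of $U^{-1}$).
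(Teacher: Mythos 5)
Your argument is correct, and it rests on the same underlying mechanism the paper uses, namely the triangularity coming from the Gauss--Borel factorization; the index bookkeeping ($mq+b-1<n$ iff $m\leqslant\lceil (n+1-b)/q\rceil-1=\deg B^{(b)}_{n-1}$, and its analogue for $A$) checks out against the degree formula \eqref{eq:degrees}. The route differs slightly in emphasis: the paper presents the corollary as a direct consequence of the biorthogonality \eqref{eq:mixed_multiple_biorthogonality}, i.e.\ one expands the vector monomial $x^m e_b$ (a row of $X_{[q]}=L^{-1}B$, hence a combination of the rows $B_0,\dots,B_{mq+b-1}$ with $mq+b-1\leqslant n-1$) and then invokes $\sum_{b,a}\int B^{(b)}_k\,\d\psi_{b,a}\,A^{(a)}_n=\delta_{k,n}$ for $k<n$; you instead stop the factorization computation halfway, obtaining the one-sided identities $\int_\Delta X_{[q]}\,\d\Psi\,A=L^{-1}$ and $\int_\Delta B\,\d\Psi\,\big(X_{[p]}\big)^\top=U^{-1}$ and reading the orthogonality off the zero patterns of $L^{-1}$ and $U^{-1}$. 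The two derivations are equivalent in substance, but yours has the small bonus of identifying the non-vanishing ``wrong-side'' integrals explicitly as entries of $L^{-1}$ and $U^{-1}$, which is consistent with the Cauchy-transform formulas \eqref{eq:CTGB} used later in the paper.
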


\begin{rem}
	For $q=1$ the mixed multiple orthogonality is the well known multiple orthogonality, or $p$-orthogonality, with $A^{(a)}_n$ the type I multiple orthogonal polynomials and the $B_n$ the type II multiple orthogonal polynomials, see \cite{nikishin_sorokin,Ismail}.
\end{rem}

We now discuss the connection of the Gauss--Borel factorization  of the moment matrix  and the Cauchy transforms. 
\begin{defi}
	Let us consider the formal semi-infinite matrices 
\begin{align}\label{eq:CTGB}
	C(z)&\coloneq z^{-1}\big(X_{[q]}(z^{-1})\big)^\top L^{-1}=\left[\begin{NiceMatrix}
		\mathscr C_0&\mathscr C_1 &\Cdots
	\end{NiceMatrix}\right], &
	 D(z)&\coloneq z^{-1}U^{-1} X_{[p]}(z^{-1})=\begin{bNiceMatrix}
	 	\mathscr D_0\\ \mathscr D_1 \\\Vdots
	 \end{bNiceMatrix}.
\end{align}
With  $\mathscr C_n,\mathscr D_n$ being $q\times p$  rectangular blocks.
\end{defi}
\begin{rem}
The previous definition is formal, as the product of matrices involves series instead of sums, hence to have a meaning we must ensure the convergence of these series. 
\end{rem}
\begin{rem}
	We have that
\begin{align*}
	C&=\begin{bNiceMatrix}
		C^{(1)}\\
		\Vdots\\
		C^{(q)}
	\end{bNiceMatrix}, & D&=\begin{bNiceMatrix}
	D^{(1)} & \Cdots & D^{(p)}
\end{bNiceMatrix}
\end{align*}
where $C^{(b)}=\begin{bNiceMatrix}[small]
	C^{(b)}_0 &	C^{(b)}_1&\Cdots 
\end{bNiceMatrix}$ are semi-infinite row vectors and $D^{(b)}=\begin{bNiceMatrix}[small]
D^{(b)}_0 \\D^{(b)}_1\\\Vdots 
\end{bNiceMatrix}$ semi-infinite column vectors.
The block matrices are
\begin{align*}
	\mathscr	C_n&=\begin{bNiceMatrix}
		C^{(1)}_{np}&\Cdots && C^{(1)}_{np+p-1}\\
		\Vdots & &&\Vdots\\
		C^{(q)}_{np}&\Cdots&&C^{(q)}_{np+p-1}
	\end{bNiceMatrix}, & 
	\mathscr	D_n&=\begin{bNiceMatrix}
		D^{(1)}_{nq}&\Cdots&& D^{(p)}_{nq}\\
		\Vdots & &&\Vdots\\
		D^{(1)}_{nq+q-1}&\Cdots &&D^{(p)}_{nq}
	\end{bNiceMatrix}.
\end{align*}
\end{rem}
\begin{pro}[Cauchy transforms]
For $|x|<|z|$,  matrices in \eqref{eq:CTGB} are the following Cauchy transforms
\begin{align*}
	C(z)&= \int \frac{\d\Psi(x)}{z-x}A(x), & D(z)&=\int B(x)\frac{\d\Psi(x)}{z-x}.
\end{align*} 
\end{pro}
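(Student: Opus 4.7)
The plan is to derive both identities by substituting the geometric series expansion $(z-x)^{-1} = \sum_{k=0}^\infty x^k z^{-k-1}$, valid for $|x|<|z|$, recognising the resulting block sums as products of the monomial matrices $X_{[q]}$ and $X_{[p]}$, and then collapsing the moment integral by means of the Gauss--Borel factorization $\mathcal M = L^{-1} U^{-1}$.

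First I would establish the two algebraic identities
\begin{align*}
\frac{I_q}{z-x} &= z^{-1}\bigl(X_{[q]}(z^{-1})\bigr)^\top X_{[q]}(x), & \frac{I_p}{z-x} &= z^{-1}\bigl(X_{[p]}(x)\bigr)^\top X_{[p]}(z^{-1}),
\end{align*}
each being a direct consequence of the geometric series: the row of blocks $z^{-1}(X_{[q]}(z^{-1}))^\top = [z^{-1}I_q, z^{-2}I_q, z^{-3}I_q, \dots]$ pairs block-wise with the column $X_{[q]}(x) = [I_q, xI_q, x^2 I_q, \dots]^\top$ to produce $\sum_{k\geq 0} x^k z^{-k-1} I_q = (z-x)^{-1} I_q$, and symmetrically for the $p$-case.

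For the formula for $C(z)$, I would insert the block representation of $I_q/(z-x)$ on the left of the integrand (using $A=(X_{[p]})^\top U$ on the right) and pull the $z$-dependent factor outside the integral:
\begin{align*}
\int \frac{\d\Psi(x)}{z-x} A(x) &= z^{-1}\bigl(X_{[q]}(z^{-1})\bigr)^\top \int X_{[q]}(x) \d\Psi(x) \bigl(X_{[p]}(x)\bigr)^\top U \\
&= z^{-1}\bigl(X_{[q]}(z^{-1})\bigr)^\top \mathcal M\, U = z^{-1}\bigl(X_{[q]}(z^{-1})\bigr)^\top L^{-1} = C(z),
\end{align*}
where the last two equalities use the definition of $\mathcal M$ and $\mathcal M U = L^{-1}$. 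The formula for $D(z)$ is perfectly symmetric: substituting the block form of $I_p/(z-x)$ on the right, using $B=LX_{[q]}$, and invoking $L\mathcal M = U^{-1}$ yields
\begin{align*}
\int B(x) \frac{\d\Psi(x)}{z-x} &= L\, \mathcal M\, z^{-1} X_{[p]}(z^{-1}) = z^{-1} U^{-1} X_{[p]}(z^{-1}) = D(z).
\end{align*}

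The one delicate point is the interchange of summation and integration together with the convergence of the semi-infinite matrix products involving $L^{-1}$ and $U^{-1}$, which are formal a priori. Since $\Delta$ is a closed interval and $|x|<|z|$ for every $x\in\Delta$, the geometric series converges uniformly on $\Delta$, which both justifies the interchange and lends entrywise meaning to $(X_{[q]}(z^{-1}))^\top L^{-1}$ and $U^{-1} X_{[p]}(z^{-1})$, as anticipated in the remark preceding the proposition. Beyond this analytic check the proof is purely algebraic, driven solely by the Gauss--Borel identity.
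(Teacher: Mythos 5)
Your proof is correct and follows essentially the same route as the paper: the block geometric-series identity $z^{-1}\big(X_{[r]}(z^{-1})\big)^\top X_{[r]}(x)=I_r/(z-x)$ combined with the Gauss--Borel identities $\mathcal M U=L^{-1}$ and $L\mathcal M=U^{-1}$, merely run from the Cauchy-transform side back to the definitions of $C$ and $D$ rather than the other way around. The extra remark on uniform convergence of the series on the compact interval $\Delta$ is a harmless (and welcome) supplement to the paper's purely formal computation.
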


\begin{proof}
	We have
\begin{align*}
		C(z)= z^{-1}\big(X_{[q]}(z^{-1})\big)^\top 	\mathcal M U=z^{-1}X^\top_{[q]}(z^{-1})\int X_{[q]}(x)\d\Psi(x)(X_{[p]}(x))^\top U.
\end{align*}
Now, notice that for $|x|<|z|$ it holds that
\begin{align*}
	z^{-1}\big(X_{[r]}(z^{-1})\big)^\top X_{[r]}(x)=\frac{1}{z}\sum_{n=0}^\infty\frac{x^n}{z^n} I_r=\frac{I_r}{z-x},
\end{align*}
so that, recalling that $(X_{[p]}(x))^\top U=A(x)$, we get
$	C(z)=\int \frac{\d\Psi(x)}{z-x}A(x)$.
Analogously, 
 \begin{align*}
 	D(z)= L	\mathcal M X_{[p]}(z^{-1})z^{-1} =\int LX_{[q]}(x)\d\Psi(x)(X_{[p]}(x))^\top X_{[p]}(z^{-1})z^{-1}=\int B(x)\frac{\d\Psi(x)}{z-x}.
 \end{align*}
\end{proof}

\begin{rem}
	Entrywise, we find
	\begin{align*}
		C^{(b)}_n(z)&=\sum_{a=1}^p\int \frac{\d\psi_{b,a}(x)}{z-x}A_n^{(a)}(x), &	D^{(a)}_n(z)&=\sum_{b=1}^q\int B_n^{(b)}(x)\frac{\d\psi_{b,a}(x)}{z-x},
	\end{align*}
and block entrywise
	\begin{align*}
	\mathscr C_n(z)&=\int \frac{\d\Psi(x)}{z-x}\mathscr A_n(x), &	\mathscr D_n(z)&=\int \mathscr B_n(x)\frac{\d\Psi(x)}{z-x}.
\end{align*}
\end{rem}
%Ahora voy a a�adir lo que entiendo del Hermite-Pad�

Now, let us discuss how these polynomials connect with the matrix Hermite--Padé problem as considered in \cite{Sorokin_Van_Iseghem_3}.
For that aim, we first introduce:
\begin{defi}[Stieltjes--Markov functions]
	Let us consider the  Stieltjes--Markov functions given by
\begin{align*}
	\hat{\psi}_{b,a}(z) &\coloneq 	\int _\Delta \frac{\d \psi_{b,a}(x) }{z-x},
\end{align*}
i.e., the Cauchy transforms of the measures. We also introduce  two families of polynomials of the second kind linked to the orthogonal polynomials:
\begin{align*}
R^{(a)}_{n}(z) &\coloneq \sum_{b=1}^q \int _\Delta \frac{B^{(b)}_{n} (z)-B^{(b)}_{n} (x) }{z-x}\d \psi_{b,a}(x) , &
Q^{(b)}_{n}(z) &\coloneq \sum_{a=1}^p \int _\Delta \frac{A^{(a)}_{n} (z)-A^{(a)}_{n} (x) }{z-x}\d \psi_{b,a}(x) . 
\end{align*}
\end{defi}

\begin{pro}[Matrix Hermite--Padé]\label{pro:matrix_Hermite_Padé}
	\begin{enumerate}
		\item 	The following simultaneous approximation holds 
\begin{align*}
	\sum_{b=1}^q 	 B^{(b)}_{n} (z) \hat{\psi}_{b,a}(z) &=R^{(a)}_{n}(z) + O\left(\frac{1}{z^{n_a+1}}\right),& z&\to\infty,
\end{align*}
with
\begin{align*}
	n_a \coloneq \deg A^{(a)}_{n-1} +1 = \left\lceil\frac{n+1-a}{p} \right\rceil.
\end{align*}
We have
$\sum_{a=1}^p n_a=n$ and $\sum_{b=1}^q (\deg B^{(b)}_{n}+1)= n+1$.
\item Analogously,
the simultaneous approximation 
\begin{align*}
	\sum_{a=1}^p 	\hat{\psi}_{b,a}(z) A^{(a)}_{n} (z) &=Q^{(b)}_{n}(z) + O\left(\frac{1}{z^{m_b+1}}\right),& z&\to\infty,
\end{align*}
with
\begin{align*}
	m_b\coloneq \deg B^{(b)}_{n-1} +1 = \left\lceil\frac{n+1-b}{q} \right\rceil,
\end{align*}
is satisfied.
We have $\sum_{b=1}^q m_b=n$ and $\sum_{a=1}^p (\deg A^{(a)}_{n}+1)= n+1$.
	\end{enumerate}
\end{pro}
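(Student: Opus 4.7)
The plan is to obtain both simultaneous approximations by splitting the Cauchy kernel $\frac{1}{z-x}$ into a polynomial part and a remainder, recognising the two pieces as the polynomials of the second kind $R^{(a)}_n$, $Q^{(b)}_n$ and the formal Cauchy transforms $D^{(a)}_n$, $C^{(b)}_n$ defined in \eqref{eq:CTGB}, and then reading off the decay order at infinity from the mixed multiple orthogonality of Corollary \ref{coro:mixed_multiple_orthogonality}.

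For part (i), I would begin with the algebraic identity $B^{(b)}_n(z) = \bigl(B^{(b)}_n(z)-B^{(b)}_n(x)\bigr) + B^{(b)}_n(x)$ inside the integral representation of $\hat\psi_{b,a}$, which yields
\begin{align*}
\sum_{b=1}^q B^{(b)}_n(z)\hat\psi_{b,a}(z)
= \sum_{b=1}^q\int_\Delta \frac{B^{(b)}_n(z)-B^{(b)}_n(x)}{z-x}\,\d\psi_{b,a}(x) + \sum_{b=1}^q\int_\Delta \frac{B^{(b)}_n(x)}{z-x}\,\d\psi_{b,a}(x),
\end{align*}
that is, $R^{(a)}_n(z)+D^{(a)}_n(z)$ by the definitions of $R^{(a)}_n$ and of $D^{(a)}_n$. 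The remaining task is to estimate $D^{(a)}_n$ near infinity: for $|z|$ larger than $\max_{x\in\Delta}|x|$ the Neumann expansion $\frac{1}{z-x}=\sum_{k\geqslant 0}x^k/z^{k+1}$ converges uniformly in $x\in\Delta$, so
\begin{align*}
D^{(a)}_n(z) = \sum_{k=0}^\infty \frac{1}{z^{k+1}}\sum_{b=1}^q\int_\Delta x^k B^{(b)}_n(x)\,\d\psi_{b,a}(x).
\end{align*}
By the second family of orthogonality relations in Corollary \ref{coro:mixed_multiple_orthogonality}, the inner sum vanishes for every $k\in\{0,\dots,\deg A^{(a)}_{n-1}\}$, so the first potentially nonzero contribution occurs at $k=\deg A^{(a)}_{n-1}+1=n_a$, giving $D^{(a)}_n(z)=O(z^{-n_a-1})$.

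Part (ii) is proved symmetrically: insert $A^{(a)}_n(z)=\bigl(A^{(a)}_n(z)-A^{(a)}_n(x)\bigr)+A^{(a)}_n(x)$ into $\sum_a \hat\psi_{b,a}(z)A^{(a)}_n(z)$ to write it as $Q^{(b)}_n(z)+C^{(b)}_n(z)$, and use the first family of orthogonality relations of Corollary \ref{coro:mixed_multiple_orthogonality} to kill the Neumann coefficients of $C^{(b)}_n$ up to order $\deg B^{(b)}_{n-1}$, yielding $C^{(b)}_n(z)=O(z^{-m_b-1})$. Finally, the identities $\sum_{a=1}^p n_a = n$ and $\sum_{b=1}^q m_b = n$ are immediate from Lemma \ref{lemma:degrees}(i) applied with $r=p$ and $r=q$, while $\sum_{b=1}^q(\deg B^{(b)}_n+1)=\sum_{a=1}^p(\deg A^{(a)}_n+1)=n+1$ is Lemma \ref{lemma:degrees}(ii).

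There is no real obstacle: the only technical point is ensuring the convergence of the geometric expansion of the Cauchy kernel, which is automatic since $\Delta$ is compact and the asymptotics are taken as $z\to\infty$. The proof therefore reduces to the two bookkeeping identities for degrees and to matching the two pieces produced by the splitting with the objects already introduced.
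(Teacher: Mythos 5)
Your proposal is correct and follows essentially the same route as the paper: split $B^{(b)}_n(z)$ (resp. $A^{(a)}_n(z)$) into the second-kind part plus the Cauchy-transform remainder, expand the kernel geometrically for large $z$, and invoke the mixed multiple orthogonality of Corollary \ref{coro:mixed_multiple_orthogonality} to annihilate the coefficients up to order $n_a$ (resp. $m_b$), with the degree identities supplied by Lemma \ref{lemma:degrees}. The only cosmetic difference is that you explicitly name the remainders as the formal Cauchy transforms $D^{(a)}_n$, $C^{(b)}_n$ of \eqref{eq:CTGB}, which the paper leaves implicit.
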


\begin{proof}
Let us check only the first case. The other follows by similar arguments.
%The other is shown similarly.
Observe that
\begin{align*}
\sum_{b=1}^q 	 B^{(b)}_{n} (z) \hat{\psi}_{b,a}(z) &= 
\sum_{b=1}^q 	 B^{(b)}_{n} (z) \int _\Delta \frac{\d \psi_{b,a}(x) }{z-x}\\
&= \sum_{b=1}^q 	 \int _\Delta \frac{B^{(b)}_{n} (z)-B^{(b)}_{n} (x) }{z-x}\d \psi_{b,a}(x) + \sum_{b=1}^q \int _\Delta \frac{B^{(b)}_{n} (x) }{z-x}\d \psi_{b,a}(x)
\\&
=R^{(a)}_n(x)+ \sum_{k=0}^{+\infty} \frac{1}{z^{k+1}} \sum_{b=1}^q\int _\Delta B^{(b)}_{n} (x) x^k\d \psi_{b,a}(x).
\end{align*}
Using now the mixed multiple orthogonality conditions, see Corollary \ref{coro:mixed_multiple_orthogonality}, we get the result. The degrees follow from Lemma \ref{lemma:degrees}. This is exactly the matrix Hermite--Padé problem that appears in \cite{Sorokin_Van_Iseghem_3}.
\end{proof}

Finally, we discuss the recursion relations
\begin{pro}[Banded recursion matrix]
\begin{enumerate}
	\item The following relation is fulfilled
\begin{align*}
	L\ell_{[q]} L^{-1}=U^{-1}(\ell_{[p]}  )^\top U.
\end{align*}
\item The semi-infinite matrix $T\coloneq L\ell_{[q]}  L^{-1}=U^{-1}(\ell_{[p]} ) ^\top U$ is a banded matrix with $p$ subdiagonals, $q$ superdiagonals, and main diagonal possibly with nonzero entries. Additionally, the extreme diagonal entries are nonzero.
\item The following recursion relations hold
\begin{align*}
	T B&=x B, & A T=xA.
\end{align*}
\end{enumerate}
\end{pro}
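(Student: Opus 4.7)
The plan is to derive (i)--(iii) from the Hankel-block symmetry \eqref{eq:symmetry_mixed} together with the Gauss--Borel factorization $\mathcal{M}=L^{-1}U^{-1}$ and the shift action $\ell_{[r]}X_{[r]}=xX_{[r]}$. First I would substitute the factorization into \eqref{eq:symmetry_mixed}, obtaining $\ell_{[q]}L^{-1}U^{-1}=L^{-1}U^{-1}(\ell_{[p]})^\top$, and then left/right multiply by $L$ and $U$, respectively, to conclude that $L\ell_{[q]}L^{-1}=U^{-1}(\ell_{[p]})^\top U$; this is (i) and justifies defining $T$ by either expression.

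For (ii), I would analyze the band structure from each factorization separately. Using $T=L\ell_{[q]}L^{-1}$: since $L$ is lower triangular and $(L\ell_{[q]})_{ij}=L_{i,j-q}$, the matrix $L\ell_{[q]}$ has support on entries with $j\leqslant i+q$; multiplying on the right by the lower triangular $L^{-1}$ does not create entries strictly above the $q$-th superdiagonal, so $T_{i,j}=0$ whenever $j-i>q$. Using the dual expression $T=U^{-1}(\ell_{[p]})^\top U$ and the upper triangularity of $U$, a symmetric argument yields $T_{i,j}=0$ whenever $i-j>p$. For the extreme diagonals, the triangularity from both sides forces only a single term to survive in each sum, giving $T_{n,n+q}=L_{n,n}(L^{-1})_{n+q,n+q}=L_{n,n}/L_{n+q,n+q}$ and $T_{n+p,n}=(U^{-1})_{n+p,n+p}\,U_{n,n}=U_{n,n}/U_{n+p,n+p}$. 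Since the diagonal entries of $L$ and $U$ are nonzero by assumption on the Gauss--Borel factorization, both extreme diagonals are nowhere vanishing, so \eqref{eq:not_zero_extreme_diagonals} holds for $T$.

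Finally, (iii) is a one-line verification once (i) and the definitions $B=LX_{[q]}$, $A=X_{[p]}^\top U$ are in hand, using the shift action stated in the lemma preceding the moment matrix:
\begin{align*}
TB&=L\ell_{[q]}L^{-1}\cdot LX_{[q]}=L\bigl(\ell_{[q]}X_{[q]}\bigr)=xLX_{[q]}=xB,\\
AT&=X_{[p]}^\top U\cdot U^{-1}(\ell_{[p]})^\top U=X_{[p]}^\top(\ell_{[p]})^\top U=\bigl(\ell_{[p]}X_{[p]}\bigr)^\top U=xX_{[p]}^\top U=xA.
\end{align*}

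I do not anticipate a genuine obstacle: the argument is a direct interplay between the triangular structure of $L,U$ and the block-Hankel symmetry of $\mathcal{M}$. The only point that merits care is the bookkeeping on which diagonals of $L\ell_{[q]}$ and $(\ell_{[p]})^\top U$ are populated; this is where the bandwidths $q$ and $p$ emerge, and the same bookkeeping pins down a unique summation index on the extreme diagonals, which is what makes the nonvanishing entries in \eqref{eq:not_zero_extreme_diagonals} transparent.
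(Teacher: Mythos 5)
Your argument is correct and follows essentially the same route as the paper: substitute the Gauss--Borel factorization into the Hankel symmetry \eqref{eq:symmetry_mixed} to get (i), band-structure bookkeeping from the two expressions for $T$ to get (ii), and the one-line cancellation for (iii). Your explicit identification $T_{n,n+q}=L_{n,n}/L_{n+q,n+q}$ and $T_{n+p,n}=U_{n,n}/U_{n+p,n+p}$ is a nice touch, since the paper's proof states the nonvanishing of the extreme diagonals without spelling out this computation.
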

\begin{proof}
	\begin{enumerate}
		\item From the Gauss--Borel factorization and \eqref{eq:symmetry_mixed} we get
		\begin{align*}
		\ell_{[q]}  L^{-1} U^{-1}=L^{-1} U^{-1}(\ell_{[p]} )^\top,
		\end{align*}
	so that
		\begin{align*}
	L	\ell_{[q]}  L^{-1} =U^{-1}(\ell_{[p]} )^\top U.
	\end{align*}
\item The matrix $L\ell_{[q]}  L^{-1}$ has all its superdiagonals above the the first $q$-th superdiagonal with zero entries, while $U^{-1}(\ell_{[p]} )^\top U$ has all its subdiagonals below the first $p$-th subdiagonal with zero entries. Hence, $T$ is a general banded matrix with $p+q+1$ diagonals possibly with nonzero entries.
\item By definition $B=L X_{[q]}$ so that $TB=L\ell_{[q]}  L^{-1}L X_{[q]}=L\ell_{[q]}  X_{[q]}=xB$. Similarly, also by definition, we have $A =(X_{[p]})^\top U $ so that 
$A T= (X_{[p]})^\top U U^{-1}(\ell_{[p]}  )^\top U=(X_{[p]})^\top(\ell_{[p]}  )^\top U=x A$.
	\end{enumerate}
\end{proof}

This banded recursion matrix is the object of study of this paper. It will be the departure point in the next sections. We have considered a matrix of measures and the associated matrix of moments and derived the mixed multiple orthogonality as well as the banded recursion matrix. 
The aim in this paper is to get conditions on the banded matrix so that we can go back this way, to retrieve the matrix of positive measures and the mixed multiple orthogonal polynomials from the recursion matrix; i.e., to get a spectral Favard theorem.

\section{Recursion polynomials and the characteristic polynomial}

We begin by introducing the recursion polynomials associated to the banded matrix $T$, with truncations given in \eqref{eq:monic_Hessenberg},  as the entries of semi-infinite left and right eigenvectors:

\begin{defi}[Left and right recursion polynomials]
	Associated with the semi-infinite banded matrix $T$ we consider the semi-infinite vectors
\begin{align*}
	A^{(a)}&=\left[\begin{NiceMatrix}
		A^{(a)}_0 & 	A^{(a)}_1& \Cdots
	\end{NiceMatrix}\right], & a&\in\{1,\dots, p\},&	B^{(b)}&=\begin{bNiceMatrix}
	B^{(b)}_0 \\[2pt]	B^{(b)}_1\\ \Vdots
\end{bNiceMatrix}, & b&\in\{1,\dots, q\},
\end{align*}
 that are left and right eigenvectors with eigenvalue $x$ of $T$, i.e.
 \begin{align*}
 	A^{(a)}T&=x 	A^{(a)},& a&\in\{1,\dots, p\},& TB^{(b)}&=xB^{(b)}, &b&\in\{1,\dots, q\}.
 \end{align*}

The entries of these left and right eigenvectors are polynomials in the eigenvalue $x$, known as left and right recursion polynomials, respectively, determined by the initial conditions
\begin{align}
	\label{eq:initcondtypeI}
	\begin{cases}
		A^{(1)}_0=1 , \\
		A^{(1)}_1= \nu^{(1)}_1 , \\
		\hspace{.895cm} \vdots \\
		A^{(1)}_{p-1}=\nu^{(1)}_{p-1} ,
	\end{cases}
	&&
	\begin{cases}
		A^{(2)}_0=0 , \\
		A^{(2)}_1= 1 , \\
		A^{(2)}_2= \nu^{(2)}_2 , \\
		\hspace{.895cm} \vdots \\
		A^{(2)}_{p-1}=\nu^{(2)}_{p-1} ,
	\end{cases}
	&& \cdots &&
	\begin{cases}
		A^{(p)}_0 =0 , \\
		\hspace{.915cm} \vdots \\
		A^{(p)}_{p-2} = 0 , \\
		A^{(p)}_{p-1} = 1,
	\end{cases}
\end{align}
with $\nu^{(i)}_{j}$ being arbitrary constants, and 
\begin{align}
	\label{eq:initcondtypeII}
	\begin{cases}
		B^{(1)}_0=1 , \\
		B^{(1)}_1= \xi^{(1)}_1 , \\
		\hspace{.895cm} \vdots \\
		B^{(1)}_{q-1}=\xi^{(1)}_{q-1} ,
	\end{cases}
	&&
	\begin{cases}
		B^{(2)}_0=0 , \\
		B^{(2)}_1= 1 , \\
		B^{(2)}_2= \xi^{(2)}_2 , \\
		\hspace{.895cm} \vdots \\
		B^{(2)}_{q-1}=\xi^{(2)}_{q-1} ,
	\end{cases}
	&& \cdots &&
	\begin{cases}
		B^{(p)}_0 =0 , \\
		\hspace{.915cm} \vdots \\
		B^{(p)}_{q-2} = 0 , \\
		B^{(p)}_{q-1} = 1,
	\end{cases}
\end{align}
with $\xi^{(i)}_{j}$ also being arbitrary, respectively. We also define the initial condition matrices
\begin{align}
	\label{eq:ic}
	\nu&\coloneq \begin{bNiceMatrix}%[columns-width=2pt]
		1& 0 & \Cdots& && 0 \\
		\nu^{(1)}_1 & 1 & \Ddots&& & \Vdots \\
		\Vdots & \Ddots & \Ddots& && \\
		&&&& &\\&&&&&0\\
		\nu^{(1)}_{p-1} &\Cdots& && \nu^{(p-1)}_{p-1}& 1 
	\end{bNiceMatrix} ,&
\xi&\coloneq \begin{bNiceMatrix}%[columns-width=2pt]
	1& 0 & \Cdots& && 0 \\
	\xi^{(1)}_1 & 1 & \Ddots&& & \Vdots \\
	\Vdots & \Ddots & \Ddots& && \\
	&&&& &\\&&&&&0\\
	\xi^{(1)}_{q-1} &\Cdots& && \xi^{(q-1)}_{q-1}& 1 
\end{bNiceMatrix}.
\end{align}
\end{defi}
Once the initial conditions are fixed, the remaining polynomials are found by:

\begin{pro}[General recursion relations]
The recursion 	polynomials are uniquely determined by the initial conditions \eqref{eq:initcondtypeI} and \eqref{eq:initcondtypeII} and the recursion relations
\begin{align}\label{eq:recursion_dual_A}
A^{(a)}_{n-q} T_{n-q,n}+ \cdots +A^{(a)}_{n+p} T_{n+p,n}&= x A^{(a)}_{n}, & n &\in\{0,1,\ldots\}, & a &\in\{1,\dots, p\}, & A_{-q}^{(a)} &=\dots=A^{(a)}_{-1}= 0,\\
\label{eq:recursion_B}
T_{n,n-p}B^{(b)}_{n-p} + \cdots + T_{n,n+q}B^{(b)}_{n+q} &= x B^{(b)}_{n}, & n &\in\{0,1,\ldots\}, & b&\in\{1,\dots, q\}, & B_{-p}^{(b)} &=\dots=B^{(b)}_{-1}= 0.
\end{align}
\end{pro}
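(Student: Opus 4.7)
The plan is to expand both eigenvalue equations componentwise and exploit the band structure of $T$ together with the nonvanishing of the extreme diagonals \eqref{eq:not_zero_extreme_diagonals} to obtain forward recurrences that propagate uniquely from the prescribed initial data.

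For the left eigenvectors $A^{(a)} T = x A^{(a)}$, I would compute the $n$-th component of the row-vector product. Since $T$ is banded with $p$ subdiagonals and $q$ superdiagonals, only the entries $T_{i,n}$ with $n-q \leqslant i \leqslant n+p$ may be nonzero, so
$$
\sum_{i=n-q}^{n+p} A^{(a)}_i\, T_{i,n} \;=\; x\, A^{(a)}_n,
$$
which is exactly \eqref{eq:recursion_dual_A} once one adopts the convention $A^{(a)}_{-q} = \cdots = A^{(a)}_{-1} = 0$ for the indices that fall outside $\N_0$ when $n < q$. Because $T_{n+p,n} \neq 0$ by \eqref{eq:not_zero_extreme_diagonals}, this relation can be solved for the highest-index entry,
$$
A^{(a)}_{n+p} \;=\; \frac{1}{T_{n+p,n}}\left(x\, A^{(a)}_n - \sum_{i=n-q}^{n+p-1} A^{(a)}_i\, T_{i,n}\right),
$$
so induction on $n$, starting from the $p$ values $A^{(a)}_0, \dots, A^{(a)}_{p-1}$ prescribed by \eqref{eq:initcondtypeI}, produces a unique family of polynomials in $x$. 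The degrees encoded in \eqref{eq:degrees} can be read off by tracking the multiplication by $x$ in the inductive step.

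For the right eigenvectors $T B^{(b)} = x B^{(b)}$ the argument is symmetric: the $n$-th row of $T$ is supported on columns $n-p \leqslant j \leqslant n+q$, so the $n$-th component of the column-vector product reads
$$
\sum_{j=n-p}^{n+q} T_{n,j}\, B^{(b)}_j \;=\; x\, B^{(b)}_n,
$$
which is \eqref{eq:recursion_B} under the convention $B^{(b)}_{-p} = \cdots = B^{(b)}_{-1} = 0$. Since $T_{n,n+q} \neq 0$ by \eqref{eq:not_zero_extreme_diagonals}, we may solve for $B^{(b)}_{n+q}$ in terms of $B^{(b)}_{n-p}, \dots, B^{(b)}_{n+q-1}$ and $x$, and induction starting from the $q$ initial values in \eqref{eq:initcondtypeII} yields a unique sequence.

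There is no substantive obstacle in this argument: it is a careful bookkeeping of the band structure and of the boundary convention. The only delicate point is ensuring that the number of free initial conditions agrees with the width of the band on the appropriate side, namely $p$ for the $A$-sequences and $q$ for the $B$-sequences, matching the sizes of the triangular initial-condition matrices $\nu$ and $\xi$ displayed in \eqref{eq:ic}.
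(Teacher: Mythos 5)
Your argument is correct and is precisely the reasoning the paper leaves implicit (it states this proposition without proof): expanding the eigenvalue equations componentwise over the band, imposing the vanishing convention for negative indices, and using the nonvanishing extreme-diagonal entries \eqref{eq:not_zero_extreme_diagonals} to solve for $A^{(a)}_{n+p}$ and $B^{(b)}_{n+q}$, so that induction from the $p$ (resp.\ $q$) prescribed initial values determines the sequences uniquely. Nothing further is needed.
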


We use the ceiling function $\lceil x\rceil$ that maps $x$ to the least integer greater than or equal to $x$.

\begin{pro}\label{pro:degrees}
	For the degrees of the left and right recursion polynomials we find
	\begin{align*}
	\deg A^{(a)}_n&=\left	\lceil \frac{n+2-a}{p}\right \rceil -1, & \deg B^{(b)}_n&=\left	\lceil \frac{n+2-b}{q}\right \rceil -1.
	\end{align*}
\end{pro}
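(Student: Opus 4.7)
My plan is to prove both degree formulas by strong induction on $n$, using the recursion relations \eqref{eq:recursion_dual_A} and \eqref{eq:recursion_B} solved for their highest-index term. I will focus on the right polynomials $B^{(b)}_n$; the argument for $A^{(a)}_n$ is entirely analogous, with the roles of $p$ and $q$ interchanged and \eqref{eq:recursion_dual_A} solved for $A^{(a)}_{n+p}$.

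The base case is $n\in\{0,1,\ldots,q-1\}$. Here the initial conditions \eqref{eq:initcondtypeII} give explicit values: $B^{(b)}_n=0$ for $0\leq n\leq b-2$, $B^{(b)}_{b-1}=1$, and $B^{(b)}_n=\xi^{(b)}_n$ for $b\leq n\leq q-1$. A direct check shows that $\lceil(n+2-b)/q\rceil-1$ equals $-1$ when $n<b-1$ (consistent with the zero polynomial convention) and $0$ when $b-1\leq n\leq q-1$ (since $1\leq n+2-b\leq q$), matching all initial data.

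For the inductive step, assume the formula at all indices $m\leq n+q-1$. Since $T_{n,n+q}\neq 0$ by \eqref{eq:not_zero_extreme_diagonals}, the recursion \eqref{eq:recursion_B} can be rewritten as
\begin{align*}
T_{n,n+q}\,B^{(b)}_{n+q}\;=\;x\,B^{(b)}_n\;-\;\sum_{j=-p}^{q-1}T_{n,n+j}\,B^{(b)}_{n+j}.
\end{align*}
By the inductive hypothesis $\deg(xB^{(b)}_n)=\lceil(n+2-b)/q\rceil$ and, for every $j\in\{-p,\ldots,q-1\}$, $\deg B^{(b)}_{n+j}=\lceil(n+j+2-b)/q\rceil-1\leq\lceil(n+2-b)/q\rceil$; the worst case $j=q-1$ uses the identity $\lceil(n+q+1-b)/q\rceil=\lceil(n+1-b)/q\rceil+1\leq\lceil(n+2-b)/q\rceil+1$. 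Hence $\deg B^{(b)}_{n+q}\leq\lceil(n+2-b)/q\rceil$, which coincides with $\lceil(n+q+2-b)/q\rceil-1$, as required.

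The main obstacle is to secure equality by ruling out leading-term cancellation. When $n+2-b\equiv 1\pmod q$, the summand $xB^{(b)}_n$ is the unique term of maximal degree on the right-hand side and equality is immediate. In the remaining residue classes one or more shifted polynomials $B^{(b)}_{n+j}$ share the maximal degree with $xB^{(b)}_n$, so the leading coefficient of $B^{(b)}_{n+q}$ is a linear combination, weighted by matrix entries $T_{n,n+j}$, of leading coefficients of already-constructed polynomials. My plan is to strengthen the induction to carry along these leading coefficients, and to exploit the nonvanishing of the extreme-diagonal entries \eqref{eq:not_zero_extreme_diagonals} to propagate non-cancellation, mirroring the leading-coefficient bookkeeping carried out in the Hessenberg case $q=1$ of \cite{PBF_1} and extending it in an essentially notational fashion to arbitrary band-width.
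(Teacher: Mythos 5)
Your base case and the upper-bound half of the induction are fine, and your ceiling arithmetic is the same as the paper's; but the proof is not finished, and the missing step is exactly the one you flag as ``the main obstacle.'' You only announce a plan to ``strengthen the induction to carry along leading coefficients, exploiting \eqref{eq:not_zero_extreme_diagonals} to propagate non-cancellation,'' and that plan cannot work as described: when several terms on the right-hand side share the maximal degree, the leading coefficient of $B^{(b)}_{n+q}$ is $\tfrac{1}{T_{n,n+q}}$ times a combination of earlier leading coefficients weighted by \emph{interior} band entries $T_{n,n+j}$, $j\leqslant q-1$, which are not constrained by \eqref{eq:not_zero_extreme_diagonals}, and by the arbitrary constants $\xi^{(i)}_j$ of \eqref{eq:initcondtypeII}. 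Concretely, take $p=1$, $q=2$, $b=1$: row $n=0$ of \eqref{eq:recursion_B} gives $T_{0,2}B^{(1)}_2=x-T_{0,0}-T_{0,1}\xi^{(1)}_1$, and row $n=1$ gives $T_{1,3}B^{(1)}_3=x\,\xi^{(1)}_1-T_{1,0}-T_{1,1}\xi^{(1)}_1-T_{1,2}B^{(1)}_2$, so the coefficient of $x$ in $B^{(1)}_3$ is $\bigl(\xi^{(1)}_1-T_{1,2}/T_{0,2}\bigr)/T_{1,3}$; choosing $\xi^{(1)}_1=T_{1,2}/T_{0,2}\neq0$ (all extreme-diagonal entries still nonzero) makes $\deg B^{(1)}_3=0$, below the claimed value $1$. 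So non-cancellation is genuinely not a consequence of the stated hypotheses outside the ``easy'' residue classes.

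Where equality \emph{is} forced is precisely $n\equiv b-1\pmod q$ (resp.\ $n\equiv a-1\pmod p$ for the $A$'s): there $x B^{(b)}_n$ strictly dominates every other term (for $j\leqslant q-1$ one has $\deg B^{(b)}_{n+j}\leqslant (n+1-b)/q<(n+1-b)/q+1$), and the leading coefficient propagates through division by the nonzero extreme entry, starting from $B^{(b)}_{b-1}=1$; in the remaining residue classes the formula is a genericity statement. This is also the level at which the paper argues: its proof simply asserts the degree pattern ``by inspection'' of the recursion and then performs the same ceiling computation you do, so your added rigor exposes, rather than closes, the gap. To complete your argument you must either restrict the equality claim to those residue classes (which is what the structure of the recursion actually gives), or introduce an explicit nondegeneracy/genericity hypothesis on the interior entries and on $\nu,\xi$; the nonvanishing of the extreme diagonals alone cannot deliver the exact degrees for all $n$.
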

\begin{proof}
	By inspection we can check that, for $j\in\{1,\dots, p\}$ and $k\in\N_0$, it holds that $\deg A^{(a)}_{kp+j} = k$, for $a\in \{1, \dots, j+1\}$ and 
$\deg A^{(a)}_{kp+j}=k-1$ for $a\in \{j+2, \dots , p\}$ and that, for $j\in\{1,\dots,q\}$ and $k\in\N_0$, $\deg B^{(b)}_{kq+j} = k$, for $b\in \{1, \dots, j+1\}$ and $\deg B^{(b)}_{kp+j}=k-1$ for $b =\{ j+2, \dots , q\}$. 

However, we notice that
\begin{align*}
\left	\lceil \frac{n+2-a}{p}\right \rceil -1= \left	\lceil \frac{kp+j+2-a}{p}\right \rceil-1 =k-1+ \left	\lceil \frac{j+2-a}{p}\right \rceil 
\end{align*}
but
\begin{align*}
	\left	\lceil \frac{j+2-a}{p}\right \rceil =
	\begin{cases}
		1, & a\in \{1, \dots, j+1\},\\
0, & a\in \{j+2, \dots , p\},
	\end{cases}
\end{align*}
and the stated result follows. For the recursion polynomials $B^{(b)}_n$ we proceed analogously. 
\end{proof}

%Given these recursion relation we refer to the sequences of polynomials $\big\{A^{(a)}_n\big\}_{n=0}^\infty$, $a\in\{1,\dots, p\}$, and $\big\{B^{(b)}_n\big\}_{n=0}^\infty$, $b\in\{1,\dots, q\}$, as recursion polynomials. % of types I and II, respectively.

\begin{defi}[Characteristic polynomials]
	For the semi-infinite matrix $T$ we consider the polynomials $P_N(x)$ as the characteristic polynomials of the truncated matrices $T^{[N-1]}$, i.e.,
\begin{align*}
	P_{N}(x)&\coloneq\begin{cases}
		1, & N=0,\\\det\big(xI_N-T^{[N-1]}\big), & N\in\N.
	\end{cases}
\end{align*}
\end{defi}
Obviously, $\deg P_N=N$.
For Hessenberg matrices \cite{PBF_1} it happens that the characteristic polynomials up to a factor coincides with the right recursion polynomials. However, for the banded situation this does not hold in general. Nevertheless, there is a relation between determinants of the recursion polynomials, right or left, with the characteristic polynomials of the banded matrix $T$. Let us see this.

\begin{defi}	Let us introduce the following matrices of left and right recursion polynomials
\begin{align*}
A_N &\coloneq	\begin{bNiceMatrix}
		A^{(1)}_N& \Cdots & A^{(1)}_{N+p-1}	 \\[2pt]
		\Vdots & & \Vdots \\[2pt]
A^{(p)}_N	& \Cdots & A^{(p)}_{N+p-1}
	\end{bNiceMatrix}, & B_N &\coloneq	\begin{bNiceMatrix}
	B^{(1)}_N & \Cdots & B^{(q)}_N \\[2pt]
	\Vdots & & \Vdots \\[2pt]
	B^{(1)}_{N+q-1} & \Cdots & B^{(q)}_{N+q-1}
\end{bNiceMatrix},& N&\in \N_0 ,
\end{align*}
and the following products
\begin{align*}
	\alpha_N &\coloneq (-1)^{(p-1)N}T_{p,0}\cdots T_{N+p-1,N-1}, &\beta_N &\coloneq (-1)^{(q-1)N}T_{0,q}\cdots T_{N-1,N+q-1} , & N&\in \N,
\end{align*}
and $\alpha_0=\beta_0=1$.
\end{defi}

\begin{rem}
	These are inspired by the matrix  polynomials blocks  given in the Gauss--Borel construction of mixed multiple orthogonality, see Lemma \ref{lemma:blocks_mixed_multiple_orthogonality}. In fact, for $M\in\N_0$, $A_{Mp}=\mathscr A_M$ and $B_{Mq}=\mathscr B_{M}$.
\end{rem}

Recall that as the entries in the extreme diagonals do not cancel
\eqref{eq:not_zero_extreme_diagonals} we have that
$	\alpha_N,\beta_N\neq 0$.
In terms of these objects we found the following important result:
\begin{teo}\label{teo:determinants_mixed}
For $N\in\N_0$, the characteristic polynomials 	and determinants of left and right recursion polynomial blocks satisfy
	\begin{align*}
	P_N(x)&=\alpha_N 	\det A_N(x)=\beta_N 	\det B_N(x).
	\end{align*}
\end{teo}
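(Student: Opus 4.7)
The strategy will be to realise $P_N(x)$ and $\det B_N(x)$ as a pair of complementary maximal minors of a single rectangular matrix, and then to invoke a Pl\"ucker-type duality between complementary minors and minors of a kernel basis. Only the identity $P_N(x)=\beta_N\det B_N(x)$ will be established in this way; the identity $P_N(x)=\alpha_N\det A_N(x)$ will follow from the transpose argument applied to the left recursion \eqref{eq:recursion_dual_A}.

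First, I will introduce the $N\times(N+q)$ matrix $W(x)$ with entries $W_{n,k}(x)=x\delta_{n,k}-T_{n,k}$ for $n\in\{0,\dots,N-1\}$, $k\in\{0,\dots,N+q-1\}$. Truncating the recursion \eqref{eq:recursion_B} to rows $n=0,\dots,N-1$ and using the initial values $B^{(b)}_{-p}=\dots=B^{(b)}_{-1}=0$ gives $W(x)V(x)=0$, where $V(x)$ is the $(N+q)\times q$ matrix whose $b$-th column is $(B^{(b)}_0(x),\dots,B^{(b)}_{N+q-1}(x))^{\top}$. By the initial conditions \eqref{eq:initcondtypeII}, the submatrix of $V(x)$ on rows $\{0,\dots,q-1\}$ equals the unit lower triangular matrix $\xi$, so the columns of $V(x)$ are linearly independent; since the first $N$ columns of $W(x)$ form $xI_N-T^{[N-1]}$, the matrix $W(x)$ has rank $N$ for generic $x$ and the columns of $V(x)$ span $\ker W(x)$.

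Next, I will apply the standard Pl\"ucker duality for complementary minors (a short consequence of the Laplace expansion of the square matrix obtained by stacking the rows of $W$ and $V^{\top}$, or equivalently of Hodge duality in $\wedge^{\bullet}\R^{N+q}$): there exists a scalar $c$, depending on $W$ and $V$ but not on the chosen partition, such that
\begin{align*}
\det W_J(x)=(-1)^{\epsilon(J,I)}\,c\,\det V_I(x)
\end{align*}
for every partition $\{0,\dots,N+q-1\}=J\sqcup I$ with $|J|=N$ and $|I|=q$, where $\epsilon(J,I)$ is the sign of the shuffle sorting the concatenation $(J,I)$ into increasing order. Two choices of $(J,I)$ then deliver the claim. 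Taking $J=\{0,\dots,N-1\}$ and $I=\{N,\dots,N+q-1\}$ one reads off $W_J=xI_N-T^{[N-1]}$, $V_I=B_N(x)$, and $\epsilon=0$, which yields $P_N(x)=c\,\det B_N(x)$. Taking instead $J=\{q,\dots,N+q-1\}$ and $I=\{0,\dots,q-1\}$ one has $V_I=\xi$, $\det V_I=1$, and $(-1)^{\epsilon(J,I)}=(-1)^{Nq}$; the band condition $T_{n,k}=0$ for $k>n+q$ forces the column of $W(x)$ indexed by $k=j+q$ to vanish at rows $n<j$, so $W_J(x)$ is lower triangular with diagonal entries $-T_{j,j+q}\neq 0$ (by \eqref{eq:not_zero_extreme_diagonals}), giving $\det W_J(x)=(-1)^N T_{0,q}\cdots T_{N-1,N+q-1}$. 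Combining the two evaluations gives $c=(-1)^{(q+1)N}T_{0,q}\cdots T_{N-1,N+q-1}=(-1)^{(q-1)N}T_{0,q}\cdots T_{N-1,N+q-1}=\beta_N$, as required.

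\textbf{Main obstacle.} Once the Pl\"ucker duality is in hand, the substance of the proof is light: the real work is (a) choosing the second partition $(J,I)$ that exploits the bandedness of $T$ to render $W_J$ triangular and thereby extract $c$ in closed form, and (b) tracking the shuffle sign so that $c$ matches the sign convention $(-1)^{(q-1)N}$ baked into the definition of $\beta_N$. The nonvanishing assumption \eqref{eq:not_zero_extreme_diagonals} is crucially used to ensure both that $\det W_J\neq 0$ in the second evaluation and that $\beta_N$ itself is nonzero.
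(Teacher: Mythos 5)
Your proof is correct, and it takes a genuinely different route from the paper's. The paper works directly on the determinant (it treats $\det A_N$ in detail and declares the $B$ case analogous): it multiplies by the extreme subdiagonal entries, uses the recursion \eqref{eq:recursion_dual_A} to rewrite the last columns, and factors the result as a determinant fixed by the initial conditions \eqref{eq:ic} times $\det\big(xI_N-T^{[N-1]}\big)$, with separate computations for $N<p$ and $N\geqslant p$ and explicit sign bookkeeping. You instead encode the truncated recursion \eqref{eq:recursion_B} as $W(x)V(x)=0$ for the rectangular pencil $W_{n,k}=x\delta_{n,k}-T_{n,k}$ and invoke the complementary-minor (Pl\"ucker/Hodge) duality between maximal minors of $W$ and minors of a kernel basis $V$: the partition $J=\{0,\dots,N-1\}$ gives $P_N=c\,\det B_N$, while the partition $J=\{q,\dots,N+q-1\}$, where the band structure makes $W_J$ triangular with diagonal $-T_{j,j+q}$ and $V_I=\xi$ by \eqref{eq:initcondtypeII}, pins down $c$; I checked the shuffle sign $(-1)^{Nq}$ against low-order cases, and it combines with $(-1)^N$ to give exactly the $(-1)^{(q-1)N}$ in $\beta_N$, so the constant comes out right, and the transpose reduction for the $A$ identity is legitimate since the left recursion is the right recursion for $T^\top$ with $p,q$ interchanged and $\nu$ in place of $\xi$. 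What your route buys is uniformity (no case split on $N$ versus $p$ or $q$) and a conceptual reading of the theorem: $P_N$ and $\det B_N$ are complementary Pl\"ucker coordinates attached to the same kernel, with \eqref{eq:not_zero_extreme_diagonals} guaranteeing nondegeneracy. What the paper's route buys is self-containedness: its key manipulation, multiplying the stacked matrix of unit rows and recursion rows by $xI_N-T^{[N-1]}$, is in effect an explicit proof of the duality you cite, specialized to the two partitions you use, so the underlying algebra is cognate even though the organization differs. Two small points worth making explicit in a final write-up: the duality constant a priori depends on $x$, but your second partition evaluates it as the $x$-independent $\beta_N$ on the cofinite set where $\operatorname{rank}W(x)=N$, and since both sides are polynomials the identity extends to all $x$; and the degenerate case $N=0$ holds trivially because $P_0=1$, $\beta_0=1$ and $\det B_0=\det\xi=1$.
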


\begin{proof}
For $N=0$ we have that $\det A_0=\det \nu=1$. For $N=1$ we get 
\begin{align*}
	T_{p,0}\det A_1&=	\begin{vNiceMatrix}[]
		A^{(1)}_1& \Cdots & A^{(1)}_{p-1}&T_{p,0}A^{(1)}_{p}\\
		\Vdots & & \Vdots & \Vdots \\
		A^{(p)}_{1}	 & \Cdots & A^{(p)}_{p-1}&T_{p,0}A^{(p)}_{p}
	\end{vNiceMatrix} =	\begin{vNiceMatrix}[]
	A^{(1)}_1& \Cdots & A^{(1)}_{p-1}&(x-T_{0,0})A^{(1)}_{0}\\
	\Vdots & & \Vdots & \Vdots \\
	A^{(p)}_{1}	 & \Cdots & A^{(p)}_{p-1}&(x-T_{0,0})A^{(p)}_{0}
\end{vNiceMatrix} 
\end{align*}
where we have used the recursion \eqref{eq:recursion_dual_A} in the last column of this determinant.
Now we express this last determinant as the following product of determinants

\begin{align*}
\begin{vNiceMatrix}[]
	A^{(1)}_1& \Cdots & A^{(1)}_{p-1}&(x-T_{0,0})A^{(1)}_{0}\\
	\Vdots & & \Vdots & \Vdots \\
	A^{(1)}_{p}	 & \Cdots & A^{(p)}_{p-1}&(x-T_{0,0})A^{(p)}_{0}
\end{vNiceMatrix} 	&= 
		\begin{vNiceMatrix}[]
		A^{(1)}_0& \Cdots & A^{(1)}_{p-1}\\
		\Vdots & & \Vdots \\
		A^{(p)}_{0}	 & \Cdots & A^{(p)}_{p-1}
	\end{vNiceMatrix} 	\left|\begin{NiceArray}{cccc|c}
			0&\Cdots&&0&	x-T_{0,0} \\\hline
		\Block{4-4}<\large>{I_{p-1}}&&&&0\\
		&&&&\\
		&&&&\Vdots\\
		&&&&0%\\
	\end{NiceArray}\right|
\\&= (-1)^{p+1}(x-T_{0,0}).
\end{align*}
We proceed similarly up to $N=p-1$, so for $N\in\{2, \dots, p-1\}$, we get that
\begin{multline*}
	T_{p,0} T_{p+1,1}\dots T_{N+p-1,N-1} \det A_N\coloneq	\begin{vNiceMatrix}
		A^{(1)}_N & \Cdots & A^{(1)}_{p-1} &T_{p,0}  A^{(1)}_p & \Cdots &T_{N+p-1,N-1} A^{(1)}_{N+p-1}	\\
		\Vdots & & \Vdots &\Vdots & & \Vdots\\
		A^{(p)}_N& \Cdots & A^{(p)}_{p-1} & T_{p,0}A^{(p)}_p 
		& \Cdots & T_{N+p-1,N-1} A^{(p)}_{N+p-1}	\\
	\end{vNiceMatrix} 
	\\=
	\begin{vNiceMatrix}[small]
		A^{(1)}_N & \Cdots & A^{(1)}_{p-1} &(x-T_{0,0}) A^{(1)}_{0} -T_{1,0} A^{(1)}_{1}-\dots -T_{N-1,0} A^{(1)}_{N-1} & \dots &-T_{0,N-1} A^{(1)}_{0} -T_{1,N-1} A^{(1)}_{1}-\dots+(x-T_{N-1,N-1} )A^{(1)}_{N-1}	\\
		\Vdots & & \Vdots & \Vdots & & \Vdots\\
		A^{(p)}_N& \Cdots & A^{(p)}_{p-1} & (x-T_{0,0}) A^{(p)}_{0} -T_{1,0} A^{(p)}_{1}-\dots -T_{N-1,0} A^{(p)}_{N-1} 
		& \dots & -T_{0,N-1} A^{(p)}_{0} -T_{1,N-1} A^{(p)}_{1}-\dots+(x-T_{N-1,N-1} )A^{(p)}_{N-1}	\\
	\end{vNiceMatrix} 
	\\	= \setlength\extrarowheight{5pt}
	\begin{vNiceMatrix}[]
		A^{(1)}_0& \Cdots & A^{(1)}_{p-1}\\
		\Vdots & & \Vdots \\
		A^{(p)}_{0}	 & \Cdots & A^{(p)}_{p-1}
	\end{vNiceMatrix} \left|\begin{NiceArray}{wc{2.7cm}cc|wc{4.4cm}ccc}
		\Block[c]{4-3}<\Large>{0_{N\times (p-N)}}&&&\Block[c]{4-4}<\Large>{x I_N-T^{[N-1]}}&&&\\
		&&&&&&\\
		&&&&&&\\
		&&&&&&\\
		\hline 
		\Block[c]{3-3}<\Large>{I_{p-N}}	&&&\Block[c]{3-4}<\Large>{0_{(p-N) \times N}}&&&\\
		&&&&&&\\
		&&&&&&
	\end{NiceArray}\right|
	= (-1)^{N(p-N)} P_N(x),
\end{multline*}
where, in the second equality, we have used the recursion relation \eqref{eq:recursion_dual_A} in the last $N$ columns and cancel the contributions already present in the previous columns.

For $N\geqslant p$, using the recursion relation similarly as above we get
\begin{align*}
	T_{N,N-p}\cdots T_{N+p-1,N-1}\det A_N&=\det M, &M&\coloneq
\begin{bNiceMatrix}
	A^{(1)}_0 &\Cdots &&&A^{(1)}_{N-1}\\
	\Vdots & &&&\Vdots\\
		A^{(p)}_0 &\Cdots &&&A^{(p)}_{N-1}
\end{bNiceMatrix}
\begin{bNiceMatrix}
	-T_{0,N-p}&\Cdots &-T_{0,N-1}\\
	\Vdots & &\Vdots\\
	-T_{N-p-1,N-p} &\Cdots& -T_{N-p-1,N-1}\\
x-T_{N-p, N-p}&\Cdots&-T_{N-p,N-1}\\
\Vdots&\Ddots&\Vdots\\
T_{N-1, N-p}&\Cdots&	x-T_{N-1,N-1}
\end{bNiceMatrix}.
\end{align*}
In order to compute this determinant we notice that

\begin{align*}
\setlength\extrarowheight{5pt}
\left[\begin{NiceArray}{ccc|cccc}
\Block{	4-3}<\Large>{0_{(N-p)\times p}}&&&\Block{4-4}<\Large>{I_{N-p}}&&&\\
	&&&&&&\\
	&&&&&&\\
	&&&&&&\\
	\hline 
	A^{(1)}_0&\Cdots&A_{p-1}^{(1)}&A_p^{(1)}&\Cdots&&A^{(1)}_{N-1}\\
	\Vdots&&\Vdots&\Vdots&&&\Vdots\\
	&&&&&&\\
A^{(p)}_0&\Cdots&A_{p-1}^{(p)}&A_p^{(p)}&\Cdots&&A^{(p)}_{N-1}
\end{NiceArray}\right](xI_N-T^{[N-1]})=
\setlength\extrarowheight{0pt}\left[\begin{NiceArray}{ccWc{1cm}c|Wc{1.5cm}cc}
-T_{p,0}&\Cdots&&-T_{p,N-p-1}&\Block{4-3}<\Large>{C_{(N-p)\times p}}&&\\
	0&\Ddots&&\Vdots&&&\\\Vdots	&\Ddots&&&&&\\[16pt]	0&\Cdots&0&-T_{N-1,N-p-1}&&&\\\hline	\Block{3-4}<\Large>{0_{p\times (n-p)}}&&&&\Block{3-3}<\Large>{M}&&\\	&&&&&&\\	&&&&&&
\end{NiceArray}\right]
\end{align*}
where $C$ is an $(N-p)\times p$ submatrix of $xI_N- T^{[N-1]}$ that is not relevant for the reasoning. Observe that
\begin{align*}\setlength\extrarowheight{5pt}
\left|\begin{NiceArray}{ccc|cccc}
	\Block{	4-3}<\Large>{0_{(N-p)\times p}}&&&\Block{4-4}<\Large>{I_{N-p}}&&&\\
	&&&&&&\\
	&&&&&&\\
	&&&&&&\\
	\hline 
	A^{(1)}_0&\Cdots&A_{p-1}^{(1)}&A_p^{(1)}&\Cdots&&A^{(1)}_{N-1}\\
	\Vdots&&\Vdots&\Vdots&&&\Vdots\\
	&&&&&&\\
	A^{(p)}_0&\Cdots&A_{p-1}^{(p)}&A_p^{(p)}&\Cdots&&A^{(p)}_{N-1}
\end{NiceArray}\right|=\left|\begin{NiceArray}{Wc{1.5cm}cc|cccc}
\Block{	4-3}<\Large>{0_{(N-p)\times p}}&&&\Block{4-4}<\Large>{I_{N-p}}&&&\\
&&&&&&\\
&&&&&&\\
&&&&&&\\
\hline 
\Block{3-3}<\Large>{\nu}&&&A_p^{(1)}&\Cdots&&A^{(1)}_{N-1}\\
&&&\Vdots&&&\Vdots\\
&&&&&&\\
&&&A_p^{(p)}&\Cdots&&A^{(p)}_{N-1}
\end{NiceArray}\right|=(-1)^{p(N-p)},
\end{align*}
where the initial conditions of recursion polynomials has been used, and we get
\begin{align*}
	(-1)^{p(N-p)}P_N(x)=(-T_{p,0})(-T_{p+1,1})\cdots(-T_{N-1,N-p-1})\det M
\end{align*}
so that
\begin{align*}
P_N(x)=(-1)^{(p+1)(N-p)}T_{p,0}T_{p+1,1}\cdots T_{N-1,N-p-1}	T_{N,N-p}\cdots T_{N+p-1,N-1}	\det A_N(x)
\end{align*}
and observing that $(-1)^{(p+1)(N-p)}=(-1)^{(p-1)N}$ we obtain the stated result.
Finally, the second result is proven analogously.
\end{proof}

\section{Right and left eigenvectors}
We now consider determinantal polynomials constructed in terms of determinants of left and right recursion polynomials that happen to give left and right eigenvectors of $T^{[N]}$.
\begin{defi}
	Let us introduce the determinantal polynomials
	\begin{align}\label{eq:QNn}
		Q_{n,N}&\coloneq\begin{vNiceMatrix}
			A^{(1)}_{n} & \Cdots & A^{(p)}_{n} \\[2pt]
			A^{(1)}_{N+1} & \Cdots & A^{(p)}_{N+1} \\[2pt]
			\Vdots & & \Vdots \\[2pt]
			A^{(1)}_{N+p-1} & \Cdots & A^{(p)}_{N+p-1}
		\end{vNiceMatrix},&	R_{n,N}&\coloneq\begin{vNiceMatrix}
		B^{(1)}_{n} & \Cdots & B^{(q)}_{n} \\[2pt]
		B^{(1)}_{N+1} & \Cdots & B^{(q)}_{N+1} \\[2pt]
		\Vdots & & \Vdots \\[2pt]
		B^{(1)}_{N+q-1} & \Cdots & B^{(q)}_{N+q-1}
	\end{vNiceMatrix},
	\end{align}
 the semi-infinite row and column vectors
\begin{align*}
		 Q_N&\coloneq\left[\begin{NiceMatrix}
		Q_{0,N} &Q_{1,N} &\Cdots
	\end{NiceMatrix}\right], & R_N&\coloneq\left[\begin{NiceMatrix}
	R_{0,N} \\R_{1,N} \\\Vdots
\end{NiceMatrix}\right],
\end{align*}
and corresponding truncations
	\begin{align*}
		Q^{\langle N\rangle}&\coloneq \begin{bNiceMatrix}
			Q_{0,N} &Q_{1,N}&\Cdots & Q_{N,N}
		\end{bNiceMatrix}, & 	R^{\langle N\rangle}&\coloneq \begin{bNiceMatrix}
		R_{0,N} \\R_{1,N}\\\Vdots \\ R_{N,N}
	\end{bNiceMatrix}.
	\end{align*}
\end{defi}

\begin{pro}\label{pro:properties_Q}
	The following properties for polynomials $Q_{n,N}, R_{n,N}$ are satisfied
	\begin{enumerate}
		\item $ Q_{N+1,N}=\cdots=Q_{N+p-1,N}=R_{N+1,N}=\cdots=R_{N+q-1,N}=0$.
		\item $\alpha_N Q_{N,N}=\beta_N R_{N,N}=P_N$ and $(-1)^{p-1}\alpha_{N+1}Q_{N+p,N}=(-1)^{q-1}\beta_{N+1}R_{N+q,N}=P_{N+1}$.
		\item $Q_NT=xQ_N$ and $TR_N=xR_N$.
		\item \begin{align}\label{eq:dual}
			Q^{\langle N\rangle} T^{[N]}+
			\begin{bNiceMatrix}
				0& \Cdots& 0& T_{N+p,N}Q_{N+p,N}
			\end{bNiceMatrix}&=x Q^{\langle N\rangle},&%\\
	%	\label{eq:dual_dual}
		T^{[N]}R^{\langle N\rangle} +
		\begin{bNiceMatrix}
			0\\ \Vdots\\ 0\\ T_{N,N+q}R_{N+q,N}
		\end{bNiceMatrix}&=x R^{\langle N\rangle}.
		\end{align}
	\end{enumerate}
\end{pro}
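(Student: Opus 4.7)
The plan is to treat the four items in order, exploiting the determinantal structure of $Q_{n,N}$ and $R_{n,N}$ together with the recursion relations \eqref{eq:recursion_dual_A}--\eqref{eq:recursion_B}.

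For (i), I would simply observe that when $n \in \{N+1,\dots,N+p-1\}$ the first row of the determinant defining $Q_{n,N}$ coincides with one of the later rows, so the determinant vanishes; the same repeated-row argument handles $R_{n,N}$. For (ii), comparison with the matrix $A_N$ from the previous section gives $Q_{N,N}=\det A_N^\top=\det A_N$, and Theorem \ref{teo:determinants_mixed} yields $\alpha_N Q_{N,N}=P_N$. For the second identity, $Q_{N+p,N}$ differs from $\det A_{N+1}^\top$ only by cyclically moving its first row to the bottom of a $p\times p$ array, which contributes a sign $(-1)^{p-1}$; hence $(-1)^{p-1}Q_{N+p,N}=\det A_{N+1}$ and the same theorem finishes the job. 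The corresponding statements for $R_{N,N}$ and $R_{N+q,N}$ follow by the symmetric argument using $B_N$ and $B_{N+1}$.

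For (iii), I would expand $Q_{n,N}$ along its first row to write $Q_{n,N}=\sum_{j=1}^{p} c_j^{(N)} A^{(j)}_n$, where the cofactors $c_j^{(N)}$ depend on $N$ but not on $n$. Since each row $A^{(j)}$ satisfies $A^{(j)}T=xA^{(j)}$, the linear combination $Q_N$ inherits the same eigenvector identity; the dual statement $TR_N=xR_N$ follows by expanding $R_{n,N}$ along its first row and using $TB^{(b)}=xB^{(b)}$.

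Finally for (iv), I would read $Q_N T=xQ_N$ entry by entry at position $n\leqslant N$. Since $T$ has $p$ subdiagonals and $q$ superdiagonals, only indices $k\in\{n-q,\dots,n+p\}$ contribute, and the terms missing from the truncated product $Q^{\langle N\rangle}T^{[N]}$ correspond to $k\in\{N+1,\dots,n+p\}$. For $n<N$ these indices all lie in $\{N+1,\dots,N+p-1\}$, and by (i) every such $Q_{k,N}$ vanishes; only at $n=N$ does the term $k=N+p$ appear, producing the single correction $T_{N+p,N}Q_{N+p,N}$, which is exactly the boundary contribution in \eqref{eq:dual}. The companion identity for $R$ is handled identically. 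The main bookkeeping points, and the only places I expect any care to be needed, are the sign in (ii) produced by the cyclic row permutation and the observation in (iv) that the out-of-truncation indices land exactly in the vanishing range supplied by (i).
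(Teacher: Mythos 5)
Your proposal is correct and follows essentially the same route as the paper: repeated rows for (i), identification of $Q_{N,N},Q_{N+p,N}$ (and their $R$ analogues) with $\det A_N,\det A_{N+1}$ up to the cyclic-permutation sign and Theorem \ref{teo:determinants_mixed} for (ii), linearity of the determinant in its first row combined with the recursions (equivalently, your cofactor expansion) for (iii), and truncating the semi-infinite eigenvector identity with the out-of-range terms killed by (i) for (iv). The paper's proof is just a terser statement of these same steps, so no further comment is needed.
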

\begin{proof}
	\begin{enumerate}
		\item As $Q_{n,N}$ and $R_{n,N}$ are the determinants in \eqref{eq:QNn} we see that it vanishes whenever two rows are equal, that happens precisely in the indicated cases.
		\item It follows from Theorem \ref{teo:determinants_mixed}. 
		\item It is a direct consequence of the fact that each appropriate rows/columns in the determinants in \eqref{eq:QNn} satisfies corresponding recurrences.
		\item It follows from the previous points i) and iii).
	\end{enumerate}
\end{proof}

Now, we are ready to give a set of left and right eigenvectors of the banded finite matrix $T^{[N]}$. Let us assume that its eigenvalues $\lambda_k^{[N]}$, $k\in\{1,\dots,N+1\}$ are simple (which happens for example for oscillatory matrices). These eigenvalues are the zeros of the characteristic polynomials $P_{N+1}(x)$.
\begin{pro}
	For $k\in\{1,\dots,N+1\}$, the vectors
$Q^{\langle N\rangle}\big|_{x=\lambda^{[N]}_k}$ and $R^{\langle N\rangle}\big|_{x=\lambda^{[N]}_k}$
	are left and right eigenvectors of $T^{[N]}$, respectively.
\end{pro}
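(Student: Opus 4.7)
The plan is to apply Proposition \ref{pro:properties_Q} directly; the statement essentially follows by combining parts ii) and iv) and observing that the ``extra term'' on the right-hand side of \eqref{eq:dual} becomes the characteristic polynomial $P_{N+1}$ evaluated at an eigenvalue, hence vanishes.

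More concretely, I would first specialize equations \eqref{eq:dual} to $x=\lambda_k^{[N]}$. The correction terms appearing after $Q^{\langle N\rangle}T^{[N]}$ and $T^{[N]}R^{\langle N\rangle}$ are, respectively,
\begin{align*}
\begin{bNiceMatrix}
	0& \Cdots& 0& T_{N+p,N}Q_{N+p,N}
\end{bNiceMatrix}
\qquad\text{and}\qquad
\begin{bNiceMatrix}
	0\\ \Vdots\\ 0\\ T_{N,N+q}R_{N+q,N}
\end{bNiceMatrix}.
\end{align*}
By part ii) of Proposition \ref{pro:properties_Q} we have the identities
\begin{align*}
(-1)^{p-1}\alpha_{N+1}Q_{N+p,N}&=P_{N+1},& (-1)^{q-1}\beta_{N+1}R_{N+q,N}&=P_{N+1}.
\end{align*}
Since $\lambda_k^{[N]}$ is by definition a zero of $P_{N+1}$, both $Q_{N+p,N}|_{x=\lambda_k^{[N]}}$ and $R_{N+q,N}|_{x=\lambda_k^{[N]}}$ vanish, so the correction vectors are identically zero at the eigenvalue. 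What remains of \eqref{eq:dual} is precisely
\begin{align*}
Q^{\langle N\rangle}\big|_{x=\lambda_k^{[N]}}\,T^{[N]}&=\lambda_k^{[N]}\,Q^{\langle N\rangle}\big|_{x=\lambda_k^{[N]}}, &
T^{[N]}\,R^{\langle N\rangle}\big|_{x=\lambda_k^{[N]}}&=\lambda_k^{[N]}\,R^{\langle N\rangle}\big|_{x=\lambda_k^{[N]}},
\end{align*}
which is the desired eigenvalue equation.

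To complete the argument, I would also remark that these vectors are nonzero. Indeed, their last entries are $Q_{N,N}|_{x=\lambda_k^{[N]}}=P_N(\lambda_k^{[N]})/\alpha_N$ and $R_{N,N}|_{x=\lambda_k^{[N]}}=P_N(\lambda_k^{[N]})/\beta_N$, and because the zeros of $P_{N+1}$ are simple (being eigenvalues of $T^{[N]}$, which are assumed simple) and $P_N$ shares no roots with $P_{N+1}$ in our setting (by the interlacing Theorem \ref{teo:eigennvalues_TN_2} in the oscillatory case), these last entries do not vanish. No step here is really hard; the only mild subtlety is recognizing that the ``defect'' produced by truncating the semi-infinite eigenvector relations $Q_NT=xQ_N$ and $TR_N=xR_N$ is, up to the prefactor $T_{N+p,N}$ or $T_{N,N+q}$, exactly the characteristic polynomial $P_{N+1}$, so that evaluation at an eigenvalue cleanly kills the obstruction.
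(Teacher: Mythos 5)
Your proof is correct and follows exactly the paper's (very terse) argument: combine parts ii) and iv) of Proposition \ref{pro:properties_Q} and evaluate at $x=\lambda_k^{[N]}$, where the truncation defect vanishes because it is proportional to $P_{N+1}$. The extra remark on the nonvanishing of the vectors is a reasonable addition not present in the paper, though strictly it invokes the oscillatory/interlacing setting rather than only the simplicity of the zeros assumed in the statement.
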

\begin{proof}
	Properties ii) and iv) in Proposition \ref{pro:properties_Q} and an evaluation at $\lambda^{[N]}_k$ leads to the result.
\end{proof}
\section{Christoffel--Darboux formula}

We present now a generalized Christoffel--Darboux formula for the determinantal polynomials and the characteristic polynomial of a banded matrix. These results are an extension of the formulas found in \cite{Coussement-VanAssche} for the non-mixed case, see also \cite{bidiagonal_factorization_paper}. Christoffel--Darboux formulas, not of the type described here, for the mixed multiple orthogonality where discussed in \cite{Evi_Arno} and also in \cite{afm,Ariznabarreta_Manas}.
\begin{pro}[Christoffel--Darboux type formulas]\label{theorem:CD}
	\begin{enumerate}
		\item For the determinantal polynomials $Q_{n,N}$ and $R_{n,N}$ introduced in \eqref{eq:QNn} we get the following generalized Christoffel--Darboux formula
		\begin{align}\label{eq:CD2}
			\sum_{n=0}^{N}Q_{n,N}(x)R_{n,N}(y)= \frac{1}{\alpha_N \beta_N }\frac{P_{N+1}(x)P_{N}(y)-P_{N}(x)P_{N+1}(y)}{x-y}.
		\end{align}
		\item The following generalized confluent Christoffel--Darboux relation is fulfilled 
		\begin{align}\label{eq:CD2_confluent}
			\sum_{n=0}^{N}Q_{n,N}R_{n,N}= \frac{1}{\alpha_N \beta_N }\big(P'_{N+1}P_{N}-P'_{N}P_{N+1}\big).
		\end{align}
	\end{enumerate}
\end{pro}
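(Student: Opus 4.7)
The plan is to exploit the near-eigenvector property of $Q^{\langle N\rangle}(x)$ and $R^{\langle N\rangle}(y)$ given by Proposition \ref{pro:properties_Q}(iv), and compute the bilinear form $Q^{\langle N\rangle}(x)\, T^{[N]}\, R^{\langle N\rangle}(y)$ in two ways.

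First, I would start from the two identities
\begin{align*}
Q^{\langle N\rangle}(x)\, T^{[N]} &= x\, Q^{\langle N\rangle}(x) - \bigl[0,\dots,0,\, T_{N+p,N}\,Q_{N+p,N}(x)\bigr],\\
T^{[N]}\, R^{\langle N\rangle}(y) &= y\, R^{\langle N\rangle}(y) - \bigl[0,\dots,0,\, T_{N,N+q}\,R_{N+q,N}(y)\bigr]^\top.
\end{align*}
Multiplying the first on the right by $R^{\langle N\rangle}(y)$ and the second on the left by $Q^{\langle N\rangle}(x)$ and subtracting cancels the quantity $Q^{\langle N\rangle}(x)\, T^{[N]}\, R^{\langle N\rangle}(y)$ on both sides. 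Since only the last component of the correction vectors is nonzero, the identity reduces to
\begin{align*}
(y-x)\sum_{n=0}^{N}Q_{n,N}(x)R_{n,N}(y)
=T_{N,N+q}\,Q_{N,N}(x)\,R_{N+q,N}(y)-T_{N+p,N}\,Q_{N+p,N}(x)\,R_{N,N}(y).
\end{align*}

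Next, I would use the identifications in Proposition \ref{pro:properties_Q}(ii), namely
$\alpha_N Q_{N,N}=\beta_N R_{N,N}=P_N$ and $(-1)^{p-1}\alpha_{N+1}Q_{N+p,N}=(-1)^{q-1}\beta_{N+1}R_{N+q,N}=P_{N+1}$, combined with the elementary relations $\alpha_{N+1}/\alpha_N=(-1)^{p-1}T_{N+p,N}$ and $\beta_{N+1}/\beta_N=(-1)^{q-1}T_{N,N+q}$ read off directly from the definitions of $\alpha_N$ and $\beta_N$. Substituting, the two terms on the right-hand side collapse, after the signs $(-1)^{2(p-1)}$ and $(-1)^{2(q-1)}$ cancel, to $\tfrac{1}{\alpha_N\beta_N}\bigl(P_N(x)P_{N+1}(y)-P_{N+1}(x)P_N(y)\bigr)$. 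Dividing by $y-x$ and rewriting the sign yields \eqref{eq:CD2}.

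Finally, the confluent formula \eqref{eq:CD2_confluent} follows by taking the limit $y\to x$ in \eqref{eq:CD2}. The right-hand side is a $0/0$ indeterminate form, and a direct application of L'Hôpital's rule (differentiating numerator and denominator in $y$, then setting $y=x$) gives $P_N(x)P'_{N+1}(x)-P_{N+1}(x)P'_N(x)$ divided by $-1$ for the $x-y$ denominator, which rearranges to $P'_{N+1}P_N-P'_N P_{N+1}$. The main obstacle is purely bookkeeping: keeping track of the signs $(-1)^{p-1}$, $(-1)^{q-1}$ and the ratios of the $\alpha_N,\beta_N$ factors so that the closing identification with the characteristic polynomials comes out cleanly; once Proposition \ref{pro:properties_Q} is in hand the argument is essentially a generalization of the classical Jacobi-matrix Christoffel--Darboux computation.
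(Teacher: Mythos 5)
Your proposal is correct and follows essentially the same route as the paper's proof: it pairs the two relations of Proposition \ref{pro:properties_Q}(iv) with $R^{\langle N\rangle}(y)$ and $Q^{\langle N\rangle}(x)$, subtracts to cancel $Q^{\langle N\rangle}(x)T^{[N]}R^{\langle N\rangle}(y)$, and then uses the identifications $\alpha_N Q_{N,N}=\beta_N R_{N,N}=P_N$, $(-1)^{p-1}\alpha_{N+1}Q_{N+p,N}=(-1)^{q-1}\beta_{N+1}R_{N+q,N}=P_{N+1}$ together with $\alpha_{N+1}=(-1)^{p-1}T_{N+p,N}\alpha_N$, $\beta_{N+1}=(-1)^{q-1}T_{N,N+q}\beta_N$, exactly as the paper does, with the confluent case obtained as the limit $y\to x$. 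The only blemish is a harmless ambiguity of sign in your wording of the L'H\^opital step, which does not affect the (correct) final formula.
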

\begin{proof}
				We use \eqref{eq:dual} to get 
				\begin{align*}
					& -Q^{\langle N\rangle}(x)
					\begin{bNiceMatrix}
						0\\ \Vdots\\ 0 \\ T_{N,N+q}R_{N+q,N} (y)
					\end{bNiceMatrix}+\begin{bNiceMatrix}
						0& \Cdots& 0& T_{N+p,N}Q_{N+p,N}(x)
					\end{bNiceMatrix} R^{\langle N\rangle }(y)=(x-y) Q^{\langle N\rangle}(x)R^{\langle N\rangle }(y),
				\end{align*}
				Now, recalling $Q_{N,N}= \alpha^{-1}_N P_N$,$Q_{N+p,N}= (-1)^{p-1}\alpha^{-1}_{N+1} P_{N+1}$, $\alpha_{N+1}= (-1)^{p-1} T_{N+p,N}\alpha_N $,
			$R_{N,N}= \beta^{-1}_N P_N$, $R_{N+q,N}= (-1)^{q-1}\beta^{-1}_{N+1} P_{N+1}$, $\beta_{N+1}= (-1)^{q-1} T_{N,N+q}\beta_N$,
				 we obtain \eqref{eq:CD2}. Finally, \eqref{eq:CD2_confluent} appears as a limit in \eqref{eq:CD2}.
			\end{proof}
			\enlargethispage{1cm}

			\section{Biorthogonality and Christoffel numbers}
			We now discuss, for the truncated situation, how to construct biorthogonal families of left and right eigenvectors and introduce the Christoffel numbers in this setting.
			
			\begin{defi}[Christoffel numbers]
				The Christoffel numbers or coefficients are defined as
				\begin{align*}
					\mu_{k,1}^{[N]}&\coloneq 
					\frac{\begin{vNiceMatrix}[small]
							A^{(2)}_{N+1} \big(\lambda^{[N]}_k\big)& \Cdots & A^{(p)}_{N+1}\big(\lambda^{[N]}_k\big) \\
							\Vdots & & \Vdots \\
							A^{(2)}_{N+p-1} \big(\lambda^{[N]}_k\big)& \Cdots & A^{(p)}_{N+p-1}\big(\lambda^{[N]}_k\big) 
					\end{vNiceMatrix}}{\beta_N\sum_{l=0}^{N}Q_{l,N}\big(\lambda^{[N]}_k\big)R_{l,N}\big(\lambda^{[N]}_k\big)},\\
					\mu^{[N]}_{k,2}&\coloneq - \frac{\begin{vNiceMatrix}[small]
							A^{(1)}_{N+1} \big(\lambda^{[N]}_k\big)& A^{(3)}_{N+1} \big(\lambda^{[N]}_k\big)&\Cdots & A^{(p)}_{N+1}\big(\lambda^{[N]}_k\big) \\[2pt]
							\Vdots & \Vdots & & \Vdots \\
							A^{(1)}_{N+p-1} \big(\lambda^{[N]}_k\big)& A^{(3)}_{N+p-1} \big(\lambda^{[N]}_k\big)&\Cdots & A^{(p)}_{N+p-1}\big(\lambda^{[N]}_k\big)
					\end{vNiceMatrix}}{\beta_N\sum_{l=0}^{N}Q_{l,N}\big(\lambda^{[N]}_k\big)R_{l,N}\big(\lambda^{[N]}_k\big)},\\
					&\hspace*{8pt}\vdots \\
					\mu^{[N]}_{k,p}&\coloneq (-1)^{p-1} \frac{\begin{vNiceMatrix}[small]
							A^{(1)}_{N+1} \big(\lambda^{[N]}_k\big)&\Cdots & A^{(p-1)}_{N+1}\big(\lambda^{[N]}_k\big) \\[2pt]
							\Vdots & & \Vdots \\
							A^{(1)}_{N+p-1} \big(\lambda^{[N]}_k\big)&\Cdots & A^{(p-1)}_{N+p-1}\big(\lambda^{[N]}_k\big)
					\end{vNiceMatrix}}{\beta_N\sum_{l=0}^{N}Q_{l,N}\big(\lambda^{[N]}_k\big)R_{l,N}\big(\lambda^{[N]}_k\big)},\\
					\rho_{k,1}^{[N]}&\coloneq \beta_N
					\begin{vNiceMatrix}[small]
						B^{(2)}_{N+1} \big(\lambda^{[N]}_k\big)& \Cdots & B^{(q)}_{N+1}\big(\lambda^{[N]}_k\big) \\
						\Vdots & & \Vdots \\
						B^{(2)}_{N+q-1} \big(\lambda^{[N]}_k\big)& \Cdots & B^{(p)}_{N+q-1}\big(\lambda^{[N]}_k\big) 
					\end{vNiceMatrix},\\
					\rho^{[N]}_{k,2}&\coloneq -\beta_N\begin{vNiceMatrix}[small]
						B^{(1)}_{N+1} \big(\lambda^{[N]}_k\big)& B^{(3)}_{N+1} \big(\lambda^{[N]}_k\big)&\Cdots & B^{(q)}_{N+1}\big(\lambda^{[N]}_k\big) \\[2pt]
						\Vdots & \Vdots & & \Vdots \\
						B^{(1)}_{N+q-1} \big(\lambda^{[N]}_k\big)& B^{(3)}_{n+q-1} \big(\lambda^{[N]}_k\big)&\Cdots & B^{(p)}_{N+q-1}\big(\lambda^{[N]}_k\big) 
					\end{vNiceMatrix},\\
					&\hspace*{8pt}\vdots \\
					\rho^{[N]}_{k,q}&\coloneq (-1)^{q-1} \beta_N\begin{vNiceMatrix}[small]
						B^{(1)}_{N+1} \big(\lambda^{[N]}_k\big)&\Cdots & B^{(q-1)}_{N+1}\big(\lambda^{[N]}_k\big) \\
						\Vdots & & \Vdots \\[2pt]
						B^{(1)}_{N+q-1} \big(\lambda^{[N]}_k\big)&\Cdots & B^{(q-1)}_{N+q-1}\big(\lambda^{[N]}_k\big)
					\end{vNiceMatrix}.
				\end{align*}
			\end{defi}
			
			\begin{pro}[Spectral properties]\label{pro:UW} Assume that $P_{N+1}$ has simple zeros at the set $\big\{\lambda^{[N]}_k\big\}_{k=1}^{N+1}$, so that
				the vectors $u_k^{\langle N\rangle}\coloneq R^{\langle N\rangle}\big(\lambda^{[N]}_k\big)$ and $\tilde w^{\langle N\rangle}_k\coloneq Q^{\langle N\rangle} \big(\lambda^{[N]}_k\big)$ are right and left eigenvectors of $ T^{[N]}$, respectively, $k=1,\dots,N+1$. Then:
				\begin{enumerate}
					\item Biorthogonal families left and right eigenvectors $\big\{w^{\langle N\rangle}_k\big\}_{k=1}^{N+1}$ and $\big\{u^{\langle N\rangle}_k\big\}_{k=1}^{N+1}$, are
					\begin{align*}
						w^{\langle N\rangle}_{k}&=\frac{ Q^{\langle N\rangle}\big(\lambda^{[N]}_k\big)}{\beta_N\sum_{l=0}^{N}Q_{l,N}\big(\lambda^{[N]}_k\big)R_{l,N}\big(\lambda^{[N]}_k\big)}, &
							u^{\langle N\rangle}_{k}&=\beta_N R^{\langle N\rangle}\big(\lambda^{[N]}_k\big).
					\end{align*}
					\item The following expression holds
					\begin{align}
						\label{eq:kcomponentelefteigenI}
						w^{\langle N\rangle}_{k,n}&=
						%\frac{Q_{n-1,N}\big(\lambda^{[N]}_k\big)}{\sum_{l=0}^{N}Q_{l,N}\big(\lambda^{[N]}_k\big)R_{l,N}\big(\lambda^{[N]}_k\big)}=
						\frac{ \alpha_N Q_{n-1,N}\big(\lambda^{[N]}_k\big)
						}{
							P_{N}\big(\lambda^{[N]}_k\big)P'_{N+1}\big(\lambda^{[N]}_k\big)}, & 	u^{\langle N\rangle}_{k,n}&=\beta_N R_{n-1,N}\big(\lambda^{[N]}_k\big).
					\end{align}
					\item In terms of the Christoffel numbers we can write
					\begin{align}\label{eq:discrete_linear_form_typeI}
						w^{\langle N\rangle}_{k,n}= A_{n-1}^{(1)}\big(\lambda^{[N]}_k\big)\mu^{[N]}_{k,1} +\cdots +A_{n-1}^{(p)}\big(\lambda^{[N]}_k\big)\mu^{[N]}_{k,p},\\
				\label{eq:discrete_linear_form_typeII}
				u^{\langle N\rangle}_{k,n}= B_{n-1}^{(1)}\big(\lambda^{[N]}_k\big)\rho^{[N]}_{k,1} +\cdots +B_{n-1}^{(q)}\big(\lambda^{[N]}_k\big)\rho^{[N]}_{k,q}.
				\end{align}
					\item For the Christoffel numbers it holds that
					\begin{align}
						\label{eq:kcomponentelefteigenII}
						\begin{bNiceMatrix}
							\mu^{[N]}_{k,1} \\[5pt]
							\mu^{[N]}_{k,2} \\
							\Vdots
							\\
							\mu^{[N]}_{k,p}
						\end{bNiceMatrix}
						&= \begin{bNiceMatrix}%[columns-width=2pt]
							1& 0 & \Cdots& && 0 \\
							\nu^{(1)}_1 & 1 & \Ddots&& & \Vdots \\
							\Vdots & \Ddots & \Ddots& && \\
							&&&& &\\&&&&&0\\
							\nu^{(1)}_{p-1} &\Cdots& && \nu^{(p-1)}_{p-1}& 1 
						\end{bNiceMatrix}^{-1} \begin{bNiceMatrix}
							w^{\langle N\rangle}_{k,1} \\[5pt]
							w^{\langle N\rangle}_{k,2} \\
							\Vdots \\
							w^{\langle N\rangle}_{k,p}
						\end{bNiceMatrix}, &
							\begin{bNiceMatrix}
						\rho^{[N]}_{k,1} \\[5pt]
						\rho^{[N]}_{k,2} \\
						\Vdots
						\\
						\rho^{[N]}_{k,q}
					\end{bNiceMatrix}
					&= \begin{bNiceMatrix}%[columns-width=2pt]
						1& 0 & \Cdots& && 0 \\
						\xi^{(1)}_1 & 1 & \Ddots&& & \Vdots \\
						\Vdots & \Ddots & \Ddots& && \\
						&&&& &\\&&&&&0\\
						\xi^{(1)}_{q-1} &\Cdots& && \xi^{(q-1)}_{q-1}& 1 
					\end{bNiceMatrix}^{-1} \begin{bNiceMatrix}
						u^{\langle N\rangle}_{k,1} \\[5pt]
						u^{\langle N\rangle}_{k,2} \\
						\Vdots \\
						u^{\langle N\rangle}_{k,q}
					\end{bNiceMatrix}.
					\end{align} 
					\item The corresponding matrices $\mathscr U$ (with columns the right eigenvectors $u_k$ arranged in the standard order) and 
					$\mathscr W$ (with rows the left eigenvectors $w_k$ arranged in the standard order) satisfy
					\begin{align}\label{eq:UW=I}
						\mathscr U\mathscr W=\mathscr W\mathscr U=I_{N+1}.
					\end{align}
					\item In terms of the eigenvalues diagonal matrix $D=\diag\big(\lambda^{[N]}_1,\dots,\lambda^{[N]}_{N+1}\big)$ we have
					\begin{align}\label{eq:UDnW=Jn}
						\mathscr UD^n\mathscr W&=\big(T^{[N]}\big)^n, & n&\in\N_0.
					\end{align}
				\end{enumerate}
			\end{pro}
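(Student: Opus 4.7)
The plan is to build everything from two ingredients already in hand: the fact, established just before, that $\tilde w_k^{\langle N\rangle} = Q^{\langle N\rangle}(\lambda_k^{[N]})$ and $R^{\langle N\rangle}(\lambda_k^{[N]})$ are left and right eigenvectors of $T^{[N]}$ with eigenvalue $\lambda_k^{[N]}$, and the confluent Christoffel--Darboux identity \eqref{eq:CD2_confluent}. Items i)--vi) are then done in order, with ii)--iv) amounting to careful bookkeeping around the chosen normalizations.

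For i), biorthogonality for $k\neq l$ is the standard two-sided calculation $\tilde w_k^{\langle N\rangle} T^{[N]} u_l^{\langle N\rangle} = \lambda_k^{[N]}(\tilde w_k^{\langle N\rangle} u_l^{\langle N\rangle}) = \lambda_l^{[N]}(\tilde w_k^{\langle N\rangle} u_l^{\langle N\rangle})$, which forces the product to vanish since the spectrum is simple. For $k = l$, unravelling the row--column product gives $\tilde w_k^{\langle N\rangle} \cdot R^{\langle N\rangle}(\lambda_k^{[N]}) = \sum_{n=0}^N Q_{n,N}(\lambda_k^{[N]}) R_{n,N}(\lambda_k^{[N]})$, and the $\beta_N$-weighted denominator in the definition of $w_k^{\langle N\rangle}$ is tailored precisely so that $w_k^{\langle N\rangle} u_k^{\langle N\rangle} = 1$. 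For ii), I evaluate \eqref{eq:CD2_confluent} at $x = \lambda_k^{[N]}$ and use $P_{N+1}(\lambda_k^{[N]}) = 0$ to collapse the sum to $P_N(\lambda_k^{[N]}) P'_{N+1}(\lambda_k^{[N]})/(\alpha_N \beta_N)$; substituting this into the normalizing denominator of $w_k^{\langle N\rangle}$ produces \eqref{eq:kcomponentelefteigenI}, and the formula for $u_{k,n}^{\langle N\rangle}$ is read off directly.

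For iii), I expand the determinant $Q_{n-1,N}(\lambda_k^{[N]})$ along its first row $(A^{(1)}_{n-1},\ldots,A^{(p)}_{n-1})$: the signed cofactors divided by $\beta_N \sum_{l=0}^N Q_{l,N}(\lambda_k^{[N]}) R_{l,N}(\lambda_k^{[N]})$ are, by construction, exactly the Christoffel numbers $\mu_{k,a}^{[N]}$, which immediately gives \eqref{eq:discrete_linear_form_typeI}. The analogous row expansion of $R_{n-1,N}(\lambda_k^{[N]})$ produces \eqref{eq:discrete_linear_form_typeII}. Item iv) then follows by specializing \eqref{eq:discrete_linear_form_typeI} to $n = 1,\ldots,p$: the initial conditions \eqref{eq:initcondtypeI} force the $p\times p$ coefficient matrix $\big(A^{(a)}_{n-1}\big)_{n,a=1}^p$ to coincide with the lower unitriangular matrix $\nu$ of \eqref{eq:ic}, and inverting yields the stated identity for the vector of $\mu_k^{[N]}$ in terms of the first $p$ components of $w_k^{\langle N\rangle}$; the same argument with \eqref{eq:initcondtypeII} and $\xi$ handles $\rho_k^{[N]}$.

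Finally, v) is a restatement of i): $(\mathscr W \mathscr U)_{k,l} = w_k^{\langle N\rangle} u_l^{\langle N\rangle} = \delta_{k,l}$, and squareness of the two $(N+1)\times(N+1)$ matrices upgrades this to $\mathscr U \mathscr W = I_{N+1}$ as well. Item vi) is then standard diagonalization: the columnwise identity $T^{[N]} u_k^{\langle N\rangle} = \lambda_k^{[N]} u_k^{\langle N\rangle}$ reads as $T^{[N]} \mathscr U = \mathscr U D$, which combined with v) gives $T^{[N]} = \mathscr U D \mathscr W$, and iteration yields \eqref{eq:UDnW=Jn}. The only step calling for real attention is the algebra in ii), where the prefactors $\alpha_N, \beta_N$ produced by the relations $\alpha_N Q_{N,N} = \beta_N R_{N,N} = P_N$ from Proposition \ref{pro:properties_Q} must be tracked correctly; once this is done, the rest of the proposition unfolds essentially mechanically.
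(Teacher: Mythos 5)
Your proposal is correct and follows essentially the same route as the paper: biorthogonality of left/right eigenvectors from the simple spectrum plus the confluent Christoffel--Darboux identity \eqref{eq:CD2_confluent} for i)--ii), first-row expansion of the determinants $Q_{n-1,N}$, $R_{n-1,N}$ for iii), the initial-condition matrices $\nu$, $\xi$ for iv), and biorthogonality/diagonalization for v)--vi). The paper's proof is just a terser version of the same bookkeeping, so nothing is missing or different in substance.
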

			\begin{proof}
				\begin{enumerate}
					\item As the zeros are simple we have that left and right eigenvectors are orthogonal, i.e., $\tilde w^{\langle N\rangle}_k u^{\langle N\rangle}_l=\delta_{k,l} \sum_{r=0}^{N}Q_{r,N}\big(\lambda^{[N]}_k\big)R_{r,N}\big(\lambda^{[N]}_k\big)$. Hence, we divide by $\sum_{r=0}^{N}Q_{r,N}\big(\lambda^{[N]}_k\big)R_{r,N}\big(\lambda^{[N]}_k\big)$ to get normalized left eigenvectors.
					\item It follows from the previous result and Equation \eqref{eq:CD2_confluent}. 
					\item 
					In Equation \eqref{eq:QNn} expand the determinant in $Q_{n-1,N}$ along its first row.
					\item Use \eqref{eq:discrete_linear_form_typeI} for the first $p$ entries
					\begin{align*}
						\begin{bNiceMatrix}
							w^{\langle N\rangle}_{k,1} \\
							\Vdots \\
							w^{\langle N\rangle}_{k,p}
						\end{bNiceMatrix} 
						&= \begin{bNiceMatrix}
							A^{(1)}_{0} \big(\lambda^{[N]}_k\big)&\Cdots & A^{(p)}_{0}\big(\lambda^{[N]}_k\big) \\[2pt]
							% A^{(1)}_{n-1} & \Cdots & A^{(p)}_{n-1} \\[2pt]
							% A^{(1)}_{n} & \Cdots & A^{(p)}_{n} \\[2pt]
							\Vdots & & \Vdots \\[2pt]
							A^{(1)}_{p-1} \big(\lambda^{[N]}_k\big)&\Cdots & A^{(p)}_{p-1}\big(\lambda^{[N]}_k\big) \\[2pt]
						\end{bNiceMatrix} \begin{bNiceMatrix}
							\mu^{[N]}_{k,1} \\
							\Vdots
							\\
							\mu^{[N]}_{k,p}
						\end{bNiceMatrix}
					\end{align*} 
					and the initial conditions~\eqref{eq:initcondtypeI}
					\begin{align*}
						\begin{bNiceMatrix}
							A^{(1)}_{0} \big(\lambda^{[N]}_k\big)&\Cdots & A^{(p)}_{0}\big(\lambda^{[N]}_k\big) \\[2pt]
							A^{(1)}_{1} \big(\lambda^{[N]}_k\big)&\Cdots & A^{(p)}_{1}\big(\lambda^{[N]}_k\big) \\[2pt]
							% A^{(1)}_{n-1} & \Cdots & A^{(p)}_{n-1} \\[2pt]
							% A^{(1)}_{n} & \Cdots & A^{(p)}_{n} \\[2pt]
							\Vdots & & \Vdots \\[2pt]
							A^{(1)}_{p-1} \big(\lambda^{[N]}_k\big)&\Cdots & A^{(p)}_{p-1}\big(\lambda^{[N]}_k\big) \\[2pt]
						\end{bNiceMatrix} = \begin{bNiceMatrix}%[columns-width=2pt]
							1& 0 & \Cdots& && 0 \\
							\nu^{(1)}_1 & 1 & \Ddots&& & \Vdots \\
							\Vdots & \Ddots & \Ddots& && \\
							&&&& &\\&&&&&0\\
							\nu^{(1)}_{p-1} &\Cdots& && \nu^{(p-1)}_{p-1}& 1 
						\end{bNiceMatrix}
					\end{align*}
					to obtain the result. For the right vectors proceed similarly.
					\item It follows from the biorthogonality of the left and right eigenvectors.
					\item Notice that $\mathscr UD^n =\big(T^{[N]}\big)^n\mathscr U $ and use $\mathscr U^{-1}=\mathscr W$ to get $\mathscr UD^n\mathscr W=\big(T^{[N]}\big)^n$ as desired.
				\end{enumerate}
			\end{proof}
		
		\section{Mixed multiple discrete orthogonality}
		
		We reformulate the previous discussed biorthogonality in terms of a set of discrete measures and corresponding mixed multiple discrete orthogonality.			
			\begin{defi}[Step functions]\label{def:Stieltjes}
	Let us consider the following step  functions
	\begin{align*}
		\psi^{[N]}_{b,a}&\coloneq \begin{cases}
			0, & x<\lambda^{[N]}_{N+1},\\[2pt]
			\rho^{[N]}_{1,b}\mu^{[N]}_{1,a}+\cdots+\rho^{[N]}_{k,b}\mu^{[N]}_{k,a}, & \lambda^{[N]}_{k+1}\leqslant x< \lambda^{[N]}_{k}, \quad k\in\{1,\dots,N\},\\[2pt]
			\rho^{[N]}_{1,b}\mu^{[N]}_{1,a}+\cdots+\rho^{[N]}_{N+1,b}\mu^{[N]}_{N+1,a} ,
			& x \geqslant \lambda^{[N]}_{1}.
		\end{cases}
						\end{align*}
			\end{defi}
		
				We now show that last step of these step functions is  bounded. This implies in the case of positive Christoffel coefficients  that these step  functions are uniformly bounded in $N$. For that aim we need to introduce the matrix $I_{q,p}\in\R^{q\times p}$, with $(I_{q,p})_{k,l}=\delta_{k,l}$. Thus, if $p=q$ we are dealing with the identity matrix, however if $p\neq q$ is a rectangular matrix with a square block with the identity $I_{\min(p,q)}$ completed with a zero block.
			\begin{pro}\label{pro:boudness_Christoffel}
			For $a\in\{1, \dots, p\}$ and $b\in\{1, \dots, q\}$, we have 
			\begin{align}\label{eq:bound}
				\rho^{[N]}_{1,b}\mu^{[N]}_{1,a}+\cdots+\rho^{[N]}_{N+1,b}\mu^{[N]}_{N+1,a} = (\xi^{-1}I_{q,p}\nu^{-\top})_{b,a}.
			\end{align}
		\end{pro}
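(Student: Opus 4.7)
My plan is to extract \eqref{eq:bound} from the biorthogonality $\mathscr U\mathscr W=I_{N+1}$ of Proposition~\ref{pro:UW} by reading off the low-index corner of that identity, where the recursion polynomials collapse to the initial-condition data encoded in $\xi$ and $\nu$.

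First, I would write out the $(n,m)$-entry of $\mathscr U\mathscr W=I_{N+1}$ as
\begin{align*}
\sum_{k=1}^{N+1}u^{\langle N\rangle}_{k,n}\,w^{\langle N\rangle}_{k,m}=\delta_{n,m},
\end{align*}
and substitute the linear-form expressions \eqref{eq:discrete_linear_form_typeI} and \eqref{eq:discrete_linear_form_typeII}. After exchanging the orders of summation this gives, with $\Sigma_{b,a}\coloneq\sum_{k=1}^{N+1}\rho^{[N]}_{k,b}\mu^{[N]}_{k,a}$,
\begin{align*}
\sum_{b=1}^{q}\sum_{a=1}^{p} B^{(b)}_{n-1}\!\big(\lambda^{[N]}_k\big)\,A^{(a)}_{m-1}\!\big(\lambda^{[N]}_k\big)\,\rho^{[N]}_{k,b}\mu^{[N]}_{k,a}=\delta_{n,m},
\end{align*}
where one still sums in $k$; the crucial observation is that for $n\in\{1,\dots,q\}$ and $m\in\{1,\dots,p\}$ the polynomials $B^{(b)}_{n-1}$ and $A^{(a)}_{m-1}$ are, by the initial conditions \eqref{eq:initcondtypeI}--\eqref{eq:initcondtypeII}, constants equal to the entries of $\nu$ and $\xi$ in \eqref{eq:ic}, namely $B^{(b)}_{n-1}=\xi_{n,b}$ and $A^{(a)}_{m-1}=\nu_{m,a}$. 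Since these are $\lambda^{[N]}_k$-independent, the $k$-sum collapses onto $\Sigma_{b,a}$.

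This yields the matrix identity $\xi\,\Sigma\,\nu^{\top}=(\delta_{n,m})_{1\leqslant n\leqslant q,\,1\leqslant m\leqslant p}=I_{q,p}$ (the range $n\leqslant q$, $m\leqslant p$ produces precisely the rectangular identity block). Because $\xi$ and $\nu$ are unitriangular, they are invertible, and inverting gives $\Sigma=\xi^{-1}I_{q,p}\nu^{-\top}$, which entrywise is exactly \eqref{eq:bound}.

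I do not anticipate a serious obstacle here: the nontrivial spectral input (simplicity of zeros, biorthogonality, and the formulas \eqref{eq:discrete_linear_form_typeI}--\eqref{eq:discrete_linear_form_typeII}) is already packaged in Proposition~\ref{pro:UW}. The only delicate point is bookkeeping the ranges, in particular checking that restricting to $n\leqslant q$, $m\leqslant p$ gives a well-defined extraction from $\mathscr U\mathscr W=I_{N+1}$ and matches the right-hand side $I_{q,p}$ rather than a full identity. This is valid provided $N+1\geqslant\max(p,q)$, which is the interesting regime.
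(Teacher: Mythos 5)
Your proposal is correct and is essentially the paper's own argument: the paper likewise combines the relations between $(\rho,\mu)$ and the eigenvector entries (there written as $\rho=\xi^{-1}[u]$, $\mu=[w]\nu^{-\top}$, i.e.\ the inverted form of your substitution via the initial conditions) with the upper-left $q\times p$ block of $\mathscr U\mathscr W=I_{N+1}$, which gives $I_{q,p}$, and then multiplies out. You merely invert $\xi$ and $\nu$ at the end rather than at the start, and your caveat $N+1\geqslant\max(p,q)$ matches the paper's standing assumption.
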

		\begin{proof}
			Let us write \eqref{eq:kcomponentelefteigenII} in the alternative form
			\begin{align}\label{eq:mu_W_e_nu}
				\begin{bNiceMatrix}
					\mu^{[N]}_{1,1} &\Cdots&\mu^{[N]}_{1,p} \\
					\Vdots & & \Vdots \\
					\mu^{[N]}_{N+1,1} &\Cdots&\mu^{[N]}_{N+1,p}
				\end{bNiceMatrix}
				&= \begin{bNiceMatrix}
					w^{\langle N\rangle}_{1,1} &\Cdots &w^{\langle N\rangle}_{1,p} \\
					\Vdots & & \Vdots \\
					w^{\langle N\rangle}_{N+1,1} &\Cdots &w^{\langle N\rangle}_{N+1,p}
				\end{bNiceMatrix} \begin{bNiceMatrix}%[columns-width=2pt]
					1& 0 & \Cdots& && 0 \\
					\nu^{(1)}_1 & 1 & \Ddots&& & \Vdots \\
					\Vdots & \Ddots & \Ddots& && \\
					&&&& &\\&&&&&0\\
					\nu^{(1)}_{p-1} &\Cdots& && \nu^{(p-1)}_{p-1}& 1 
				\end{bNiceMatrix}^{-\top} ,\\
				\label{eq:rho_U_e_nu}
				\begin{bNiceMatrix}
					\rho^{[N]}_{1,1} &\Cdots&\rho^{[N]}_{N+1,1} \\
					\Vdots & & \Vdots \\
					\rho^{[N]}_{1,q} &\Cdots&\rho^{[N]}_{N+1,q}
				\end{bNiceMatrix}
				&= \begin{bNiceMatrix}%[columns-width=2pt]
					1& 0 & \Cdots& && 0 \\
					\xi^{(1)}_1 & 1 & \Ddots&& & \Vdots \\
					\Vdots & \Ddots & \Ddots& && \\
					&&&& &\\&&&&&0\\
					\xi^{(1)}_{q-1} &\Cdots& && \xi^{(q-1)}_{q-1}& 1 
				\end{bNiceMatrix}^{-1}\begin{bNiceMatrix}
					u^{\langle N\rangle}_{1,1} &\Cdots &u^{\langle N\rangle}_{N+1,1} \\
					\Vdots & & \Vdots \\
					u^{\langle N\rangle}_{1,q} &\Cdots &u^{\langle N\rangle}_{N+1,q}
				\end{bNiceMatrix}.
			\end{align} 
			From $\mathscr U\mathscr W = I$, we obtain
			\begin{align*}
				\begin{bNiceMatrix}
					u^{\langle N\rangle}_{1,1} &\Cdots &u^{\langle N\rangle}_{N+1,1} \\
					\Vdots & & \Vdots \\
					u^{\langle N\rangle}_{1,q} &\Cdots &u^{\langle N\rangle}_{N+1,q}
				\end{bNiceMatrix}	\begin{bNiceMatrix}
					w^{\langle N\rangle}_{1,1} &\Cdots &w^{\langle N\rangle}_{1,p} \\
					\Vdots & & \Vdots \\
					w^{\langle N\rangle}_{N+1,1} &\Cdots &w^{\langle N\rangle}_{N+1,p}
				\end{bNiceMatrix} =I_{q,p}.
			\end{align*}
			Hence,
			\begin{align}\label{eq:rho_mu_xi_nu}
				\begin{bNiceMatrix}
					\rho^{[N]}_{1,1} &\Cdots&\rho^{[N]}_{N+1,1} \\
					\Vdots & & \Vdots \\
					\rho^{[N]}_{1,q} &\Cdots&\rho^{[N]}_{N+1,q}
				\end{bNiceMatrix}			\begin{bNiceMatrix}
					\mu^{[N]}_{1,1} &\Cdots&\mu^{[N]}_{1,p} \\
					\Vdots & & \Vdots \\
					\mu^{[N]}_{N+1,1} &\Cdots&\mu^{[N]}_{N+1,p}
				\end{bNiceMatrix}=\xi^{-1} I_{q,p}\nu^{-\top}
			\end{align}
			and we get $\mu^{[N]}_{1,a}\rho^{[N]}_{1,b}+\cdots+\mu^{[N]}_{N+1,a}\rho^{[N]}_{N+1,b} = (\xi^{-1}I_{q,p}\nu^{-\top})_{b,a}$.
		\end{proof}
	Notice that these functions have bounded variation and are right continuous, so it make sense to consider the associated Lebesgue--Stieltjes measures.
						\begin{defi}[Matrix of discrete measures]
				Let us introduce a 
		$q\times p$ matrix  				$\Psi^{[N]}\coloneq\begin{bNiceMatrix}[small]
									\psi^{[N]}_{1,1}&\Cdots &\psi^{[N]}_{1,p}\\
									\Vdots & &\Vdots\\
									\psi^{[N]}_{q,1}&\Cdots &\psi^{[N]}_{q,p}
								\end{bNiceMatrix}$
and the corresponding 
				$q\times p$ matrix of 
				discrete Lebesgue--Stieltjes measures supported at the zeros of $P_{N+1}$, 
				\begin{align}\label{eq:discrete_mesures}
					\d\Psi^{[N]}=\begin{bNiceMatrix}
						\d\psi ^{[N]}_{1,1}&\Cdots &	\d\psi^{[N]}_{1,p}\\
						\Vdots & &\Vdots\\
							\d\psi^{[N]}_{q,1}&\Cdots &	\d\psi^{[N]}_{q,p}
					\end{bNiceMatrix}=\sum_{k=1}^{N+1}\begin{bNiceMatrix}
						\rho^{[N]}_{k,1}\\\Vdots \\	\rho^{[N]}_{k,q}
					\end{bNiceMatrix}\begin{bNiceMatrix}
						\mu^{[N]}_{k,1} & \Cdots & \mu^{[N]}_{k,p}
					\end{bNiceMatrix}\delta\big(x-\lambda^{[N]}_k\big).
				\end{align}
			\end{defi}
			\begin{rem}
				This matrix of discrete measures is rank $1$ at each point of the support.
			\end{rem}
	
				\begin{teo}[Mixed multiple discrete biorthogonality]\label{theorem:birothoganality}
				Assume that the recursion polynomials $P_{N+1}$ have simple zeros $\big\{\lambda_k^{[N]}\big\}_{k=1}^{N+1}$.
				The following biorthogonal relations hold
				\begin{align*}
					\sum_{a=1}^p\sum_{b=1}^{q}	\int B^{(b)}_n(x)\d\psi^{[N]}_{b,a}(x)A^{(a)}_{m}(x)&=\delta_{n,m}, &n,m&\in\{0,\dots,N\}.
				\end{align*}
			\end{teo}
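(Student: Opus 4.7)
The plan is to unwind the definition of the discrete matrix measure in \eqref{eq:discrete_mesures} and recognize the resulting finite sum as a matrix product of $\mathscr U$ and $\mathscr W$, for which the biorthogonality \eqref{eq:UW=I} in Proposition \ref{pro:UW} applies.

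More precisely, I would first substitute \eqref{eq:discrete_mesures} into the double sum on the left-hand side. Because each Dirac mass contributes as a rank-one outer product $\rho^{[N]}_k \mu^{[N]}_k{}^\top$, the integral collapses to
\begin{align*}
\sum_{a=1}^p\sum_{b=1}^q \int B^{(b)}_n(x)\d\psi^{[N]}_{b,a}(x)A^{(a)}_m(x)
=\sum_{k=1}^{N+1}\Bigl(\sum_{b=1}^q B^{(b)}_n\bigl(\lambda^{[N]}_k\bigr)\rho^{[N]}_{k,b}\Bigr)\Bigl(\sum_{a=1}^p A^{(a)}_m\bigl(\lambda^{[N]}_k\bigr)\mu^{[N]}_{k,a}\Bigr).
\end{align*}

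Next, I would invoke the two identities \eqref{eq:discrete_linear_form_typeI} and \eqref{eq:discrete_linear_form_typeII} in Proposition \ref{pro:UW} to recognize the two inner sums as components of the left and right eigenvectors, respectively:
\begin{align*}
\sum_{a=1}^p A^{(a)}_m\bigl(\lambda^{[N]}_k\bigr)\mu^{[N]}_{k,a}&=w^{\langle N\rangle}_{k,m+1},&
\sum_{b=1}^q B^{(b)}_n\bigl(\lambda^{[N]}_k\bigr)\rho^{[N]}_{k,b}&=u^{\langle N\rangle}_{k,n+1}.
\end{align*}
Thus the whole expression becomes $\sum_{k=1}^{N+1}u^{\langle N\rangle}_{k,n+1}w^{\langle N\rangle}_{k,m+1}$, which by the definitions of the matrices $\mathscr U$ (columns given by the $u^{\langle N\rangle}_k$) and $\mathscr W$ (rows given by the $w^{\langle N\rangle}_k$) is precisely the entry $(\mathscr U\mathscr W)_{n+1,m+1}$.

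Finally, I would apply \eqref{eq:UW=I} from Proposition \ref{pro:UW}, namely $\mathscr U\mathscr W=I_{N+1}$, valid under the standing assumption that $P_{N+1}$ has simple zeros, to conclude that this entry equals $\delta_{n,m}$ for indices in $\{0,\dots,N\}$. There is no real obstacle here; the only subtlety is matching indices carefully (the shift by one between polynomial degrees and eigenvector components), which is already handled by formulas \eqref{eq:discrete_linear_form_typeI}--\eqref{eq:discrete_linear_form_typeII}. The substantive content of the theorem is therefore entirely encoded in the spectral decomposition $\mathscr U\mathscr W=I_{N+1}$ established earlier.
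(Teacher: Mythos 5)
Your proposal is correct and follows exactly the paper's own route: the published proof simply cites \eqref{eq:discrete_linear_form_typeI}, \eqref{eq:discrete_linear_form_typeII} and $\mathscr U\mathscr W=I$, which are precisely the three ingredients you combine. Your version merely spells out the intermediate collapse of the integral against the discrete rank-one measures and the index bookkeeping, all of which is accurate.
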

			\begin{proof}
				It follows from Equations \eqref{eq:discrete_linear_form_typeI}, \eqref{eq:discrete_linear_form_typeII} and 
				$\mathscr U\mathscr W=I$.
			\end{proof}			
			From this biorthogonality we get the following:
						\begin{coro}[Mixed multiple discrete orthogonality]
				Assume that the polynomial $P_{N+1}$ has simple zeros $\big\{\lambda_k^{[N]}\big\}_{k=1}^{N+1}$.
				Then, the following discrete type mixed multiple orthogonality for $m \in\{1,\dots, N\}$ are satisfied:
				\begin{align*}
					\sum_{a=1}^p\int x^n\d\psi ^{[N]}_{b,a}A^{(a)}_{m}&=0, & 
					n&\in\big\{0,\dots,\deg B^{(b)}_{m-1}\big\},&	b&\in\{1,\dots,q\},\\
					\sum_{b=1}^q\int B^{(b)}_{m}	\d\psi ^{[N]}_{b,a}x^n&=0, 
					&				n&\in\big\{0,\dots,\deg A^{(a)}_{m-1}\big\}, & a&\in\{1,\dots,p\}.
				\end{align*}
			\end{coro}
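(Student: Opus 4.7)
The plan is to deduce the corollary directly from the biorthogonality of Theorem~\ref{theorem:birothoganality} by re-encoding it as a matrix identity and invoking the triangular structure of the monomial expansions $B=LX_{[q]}$ and $A=X_{[p]}^\top U$. Writing each $B^{(b)}_n$ along the $b$-th component of $X_{[q]}$, and each $A^{(a)}_m$ along the $a$-th row of $X_{[p]}^\top$, uniquely determines semi-infinite coefficient matrices $L$ and $U$. The degree formulas of Proposition~\ref{pro:degrees} then force $L$ to be lower triangular and $U$ upper triangular.

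Introducing the discrete moment matrix $\mathcal M^{[N]}\coloneq\int X_{[q]}(x)\,\d\Psi^{[N]}(x)\,X_{[p]}^\top(x)$, the biorthogonality becomes the matrix identity $(L\,\mathcal M^{[N]}\,U)_{n,m}=\delta_{n,m}$ for $n,m\in\{0,\dots,N\}$. Because $L$ and $U$ are triangular, all indices appearing in the defining inner sums stay $\leqslant N$, so this identity is equivalent to the genuinely finite-dimensional one
\begin{align*}
L^{[N+1]}\,(\mathcal M^{[N]})^{[N+1]}\,U^{[N+1]}=I_{N+1}
\end{align*}
on the leading principal $(N+1)\times(N+1)$ blocks. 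Multiplying by $(U^{[N+1]})^{-1}$ on the right (resp.\ by $(L^{[N+1]})^{-1}$ on the left) shows that $L\,\mathcal M^{[N]}$ is upper triangular on that block, while $\mathcal M^{[N]}\,U$ is lower triangular there.

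The two orthogonality relations then fall out by expanding the off-diagonal entries. With the flattened index $k=np+a-1$, a direct unfolding identifies
\begin{align*}
(L\,\mathcal M^{[N]})_{m,\,np+a-1}=\sum_{b=1}^{q}\int B^{(b)}_m(x)\,x^n\,\d\psi^{[N]}_{b,a}(x),
\end{align*}
which therefore vanishes whenever $k<m\leqslant N$. An analogous expansion with $j=nq+b-1$ identifies
\begin{align*}
(\mathcal M^{[N]}\,U)_{nq+b-1,\,m}=\sum_{a=1}^{p}\int x^n\,\d\psi^{[N]}_{b,a}(x)\,A^{(a)}_m(x),
\end{align*}
which vanishes for $j<m\leqslant N$. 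The precise ranges of $n$ in the statement come from the arithmetic equivalences $k<m\Leftrightarrow n\leqslant\deg A^{(a)}_{m-1}$ and $j<m\Leftrightarrow n\leqslant\deg B^{(b)}_{m-1}$, both of which are routine consequences of Proposition~\ref{pro:degrees} combined with the identity $\lceil x\rceil\geqslant n+1\Leftrightarrow x>n$.

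I do not anticipate a real obstacle: the argument is the discrete counterpart of the reasoning implicit in Corollary~\ref{coro:mixed_multiple_orthogonality}, and the only point requiring a brief check is the triangularity of the coefficient matrices $L$ and $U$ extracted from the recursion polynomials, which is immediate from the degree formulas. Everything else is matrix algebra and ceiling-function bookkeeping.
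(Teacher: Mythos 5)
Your proof is correct and follows essentially the argument the paper intends (and leaves implicit): the corollary is deduced from the discrete biorthogonality of Theorem~\ref{theorem:birothoganality} combined with the degree formulas of Proposition~\ref{pro:degrees}, your triangular coefficient matrices $L$, $U$ and the block identity $L^{[N+1]}(\mathcal M^{[N]})^{[N+1]}U^{[N+1]}=I_{N+1}$ being a matrix-algebra encoding of the fact that each monomial vector $x^n e_b$ with $n\leqslant\deg B^{(b)}_{m-1}$ (resp.\ $x^n e_a$ with $n\leqslant\deg A^{(a)}_{m-1}$) lies in the span of the first $m$ recursion polynomial vectors, so that pairing with index $m\leqslant N$ gives zero. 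The index equivalences $np+a-1<m\Leftrightarrow n\leqslant\deg A^{(a)}_{m-1}$ and $nq+b-1<m\Leftrightarrow n\leqslant\deg B^{(b)}_{m-1}$ check out, and all indices stay within the range $\{0,\dots,N\}$ where the discrete biorthogonality holds, so there is no gap.
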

			
			\section{Positive bidiagonal factorization and Christoffel numbers positivity}
			Positive bidiagonal factorization (PBF) accommodate naturally to TN banded matrices as all the subdiagonals may be constructed in terms of simpler bidiagonal matrices. 
			\begin{defi}[Positive bidiagonal factorization]
				We say that a banded matrix $ T$ as in \eqref{eq:monic_Hessenberg} admits a PBF if
				%the following factorization holds
				\begin{align}\label{eq:bidiagonal}
					T= L_{1} \cdots L_{p} \Delta U_q\cdots U_1,
				\end{align}
				with $\Delta=\diag(\Delta_0,\Delta_1,\dots)$ and bidiagonal matrices given respectively by 
		
				\begin{align}\label{eq:bidiagonal_factors}
						L_k&\coloneq \left[\begin{NiceMatrix}[columns-width=auto]
							1 &0&\Cdots&\\
							 L_{k|0} & 1 &\Ddots&\\
							0& L_{k|1}& 1& \\
							\Vdots&\Ddots& \Ddots& \Ddots\\&&&
						\end{NiceMatrix}\right], & 
						U_k& \coloneq
						\left[\begin{NiceMatrix}[columns-width = auto]
							1& U_{k|0}&0&\Cdots&\\
							0& 1& U_{k|1}&\Ddots&\\
							\Vdots&\Ddots&1&\Ddots&\\
							& &\Ddots &\Ddots &\\&&&&
						\end{NiceMatrix}\right], 
				\end{align}
				and such that the positivity constraints $L_{k|i}, U_{k|i}, \Delta_i>0,$ for $i\in\N_0$, are satisfied. 
			\end{defi}

			\begin{rem}
				Notice that $L_1,\dots, L_p,\Delta^{[N]},U_q^{[N]},\dots ,U_1^{[N]}\in \operatorname{InTN}$. 
			\end{rem}
			\begin{pro}
				The above positive bidiagonal factorization of $T$ induces the following positive bidiagonal factorization for the leading principal submatrix $ T^{[N]}$ 
				\begin{align}\label{eq:bidiagonal_HessenbergN}
					T^{[N]}= L^{[N]}_{1} \cdots L_{p}^{[N]} \Delta^{[N]}U_q^{[N]}\cdots U_1^{[N]}.
				\end{align}
			\end{pro}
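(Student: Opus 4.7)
The plan is to prove this by invoking, for each factor, the elementary fact that truncation to leading principal submatrices distributes over products when one factor is triangular. Specifically, if $A$ is (semi-infinite) lower triangular, then $(AB)^{[N]} = A^{[N]} B^{[N]}$ because the $i$-th row of $A$ has nonzero entries only in columns $0,\ldots,i$, and for $i \le N$ this restricts summation to indices at most $N$; dually, if $C$ is upper triangular, then $(BC)^{[N]} = B^{[N]} C^{[N]}$. For the diagonal factor $\Delta$ both identities hold trivially.

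Applying this to the factorization \eqref{eq:bidiagonal}, I would peel off the $L$ factors from the left one at a time: writing $T = L_1 (L_2\cdots L_p \Delta U_q\cdots U_1)$ and using that $L_1$ is lower triangular gives $T^{[N]} = L_1^{[N]}(L_2\cdots L_p \Delta U_q\cdots U_1)^{[N]}$; iterating yields
\begin{equation*}
T^{[N]} = L_1^{[N]}\cdots L_p^{[N]} \bigl(\Delta U_q\cdots U_1\bigr)^{[N]}.
\end{equation*}
Then I would peel off the $U$ factors from the right, using that each $U_k$ is upper triangular, obtaining $(\Delta U_q\cdots U_1)^{[N]} = \Delta^{[N]} U_q^{[N]}\cdots U_1^{[N]}$. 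Combining gives \eqref{eq:bidiagonal_HessenbergN}.

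Positivity of the factors $L_k^{[N]}$, $U_k^{[N]}$, $\Delta^{[N]}$ is immediate: their nonzero entries are a subset of the entries of $L_k$, $U_k$, $\Delta$, which are all strictly positive by hypothesis; in particular the truncated bidiagonal factors have the same shape as in \eqref{eq:bidiagonal_factors} with the same positive entries (just up to finite index $N$). There is no real obstacle here — the only point to check carefully is the bookkeeping of indices when peeling off the triangular factors, which is precisely why the two distributive identities above are the right lemmas to isolate at the start of the argument.
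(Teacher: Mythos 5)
Your proof is correct. The paper in fact states this proposition without any proof, treating it as an immediate consequence of the triangular structure of the factors, and your argument supplies exactly the standard justification: the two truncation identities $(AB)^{[N]}=A^{[N]}B^{[N]}$ for lower triangular $A$ and $(BC)^{[N]}=B^{[N]}C^{[N]}$ for upper triangular $C$ (all products being well defined entrywise since the factors are banded), applied by peeling the $L_k$ from the left and the $U_k$ from the right, together with the trivial observation that truncation preserves the bidiagonal shape and the strict positivity of the entries.
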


			\begin{pro}
				If $T$ has a PBF then its leading principal submatrices 	$T^{[N]} $ are oscillatory.
			\end{pro}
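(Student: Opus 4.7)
The plan rests on the Gantmacher--Krein criterion (Theorem~\ref{teo:Gantmacher--Krein Criterion}): a matrix in $\operatorname{InTN}$ is oscillatory if and only if its first sub- and superdiagonal entries are strictly positive. So the task splits naturally into two pieces, namely (a) establishing that $T^{[N]}\in\operatorname{InTN}$, and (b) verifying positivity of the entries $T^{[N]}_{i+1,i}$ and $T^{[N]}_{i,i+1}$.

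For (a) I would simply invoke the preceding proposition, which truncates the PBF to
\[
T^{[N]}= L_{1}^{[N]} \cdots L_{p}^{[N]} \Delta^{[N]}U_q^{[N]}\cdots U_1^{[N]},
\]
together with the remark just above it that records that each bidiagonal factor $L_k^{[N]}$, $U_k^{[N]}$ and the positive diagonal $\Delta^{[N]}$ belong to $\operatorname{InTN}$. Since $\operatorname{InTN}$ is closed under ordinary matrix multiplication (Theorem~1.1.2 of \cite{Fallat-Johnson}, mentioned in the preliminaries), the product $T^{[N]}$ is again in $\operatorname{InTN}$.

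For (b) I would compute the first subdiagonal entry directly from the factorization. A short induction on $k$ shows that
\[
(L_1^{[N]}\cdots L_k^{[N]})_{i+1,i}=L_{1|i}+L_{2|i}+\cdots+L_{k|i},
\]
because at each step the new unit lower bidiagonal factor contributes its own $(i+1,i)$ entry on top of the accumulated subdiagonal while preserving the unit diagonal. Multiplication by $\Delta^{[N]}$ then scales this sum by $\Delta_i>0$. Writing $T^{[N]}_{i+1,i}$ as $\sum_j (L_1^{[N]}\cdots L_p^{[N]}\Delta^{[N]})_{i+1,j}(U_q^{[N]}\cdots U_1^{[N]})_{j,i}$, upper triangularity with unit diagonal of the $U$-product restricts the sum to $j\le i$, with the $j=i$ term contributing the positive quantity $(L_{1|i}+\cdots+L_{p|i})\Delta_i$; every remaining summand is a product of nonnegative PBF parameters, so the total is strictly positive. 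The completely analogous computation, with the roles of $L$'s and $U$'s interchanged, yields $T^{[N]}_{i,i+1}>0$ via the dominant term $\Delta_i(U_{1|i}+\cdots+U_{q|i})$.

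There is no serious obstacle here: the only step needing care is the bookkeeping of the bidiagonal multiplication in the previous paragraph, and even there the argument reduces to noting that the ``diagonal $\cdot$ diagonal'' contribution of the outer unit triangular factors is already strictly positive, while every other contribution to the targeted entries is nonnegative. Once this is in place, Theorem~\ref{teo:Gantmacher--Krein Criterion} gives the oscillatory conclusion immediately.
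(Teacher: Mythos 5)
Your proof is correct and follows essentially the same route as the paper: truncate the PBF, use closure of $\operatorname{InTN}$ under products to get $T^{[N]}\in\operatorname{InTN}$, and then apply the Gantmacher--Krein criterion. The only difference is that you spell out explicitly (via the bidiagonal bookkeeping giving the dominant terms $(L_{1|i}+\cdots+L_{p|i})\Delta_i$ and $\Delta_i(U_{1|i}+\cdots+U_{q|i})$) the positivity of the first sub- and superdiagonal entries, which the paper asserts from the positivity of the parameters without computation.
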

			\begin{proof}
				As all factors are InTN the product matrix is InTN. Moreover, as all parameters in the bidiagonal factors are positive then using Gantmacher--Krein Criterion we get that the matrix is oscillatory.
			\end{proof}

%			From hereon in this paper we assume that the matrix $T$ has a PBF and consequently it is oscillatory.
			
				\begin{pro}[Interlacing]\label{pro:interlacing}
				Let us assume that $T$ is oscillatory. Then:
					\begin{enumerate}
						\item The polynomial $P_{N+1}$ interlaces $P_N$.
						\item For $x\in\R$, for the corresponding Wronskian we find $P_{N+1}'P_N-P_N'P_{N+1}>0$. 
					In particular,
					\begin{align*}
						(P_{N+1}'P_N)\big|_{x=\lambda^{[N]}_k}&>0, & (P_{N+1}P_N')\big|_{x=\lambda^{[N-1]}_k}&<0.
					\end{align*}
						\item The confluent kernel is a positive function; i.e., $\alpha_N \beta_N \sum_{n=0}^{N}Q_{n,N}(x)R_{n,N}(x)>0$ for $x\in\R$.
					\end{enumerate}
				\end{pro}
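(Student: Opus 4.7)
The plan is to handle the three items in sequence: (i) follows directly from Theorem~\ref{teo:eigennvalues_TN_2}, (ii) from a standard partial-fraction/sign argument combined with (i), and (iii) from (ii) via the confluent Christoffel--Darboux identity \eqref{eq:CD2_confluent}. For part (i), since $T$ is oscillatory every leading principal submatrix $T^{[N]}$ is oscillatory. Applying Theorem~\ref{teo:eigennvalues_TN_2} to $T^{[N]}$, its $N+1$ simple positive eigenvalues strictly interlace the $N$ eigenvalues of the submatrix obtained by deleting its last row and column, which is precisely $T^{[N-1]}$. Translating to characteristic polynomials gives strict interlacing of the zeros of $P_{N+1}$ and~$P_N$.

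For part (ii), I would order the zeros decreasingly $\lambda^{[N]}_1>\cdots>\lambda^{[N]}_{N+1}$ and $\lambda^{[N-1]}_1>\cdots>\lambda^{[N-1]}_N$, so that (i) reads $\lambda^{[N]}_k>\lambda^{[N-1]}_k>\lambda^{[N]}_{k+1}$. The key sign observation is that for each $k$ both $P_N(\lambda^{[N]}_k)$ and $P'_{N+1}(\lambda^{[N]}_k)$ have sign $(-1)^{k-1}$: monicity forces both polynomials to be positive to the right of their largest roots, and each acquires one sign change per simple zero as $x$ decreases, while strict interlacing guarantees $P_N$ does not vanish at any~$\lambda^{[N]}_k$. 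Partial fractions give
\begin{align*}
\frac{P_N(x)}{P_{N+1}(x)}=\sum_{k=1}^{N+1}\frac{c_k}{x-\lambda^{[N]}_k},\qquad c_k=\frac{P_N(\lambda^{[N]}_k)}{P'_{N+1}(\lambda^{[N]}_k)}>0,
\end{align*}
and differentiating,
\begin{align*}
\frac{P'_{N+1}(x)P_N(x)-P'_N(x)P_{N+1}(x)}{P_{N+1}(x)^2}=-\frac{d}{dx}\frac{P_N(x)}{P_{N+1}(x)}=\sum_{k=1}^{N+1}\frac{c_k}{(x-\lambda^{[N]}_k)^2}>0
\end{align*}
on $\R\setminus\{\lambda^{[N]}_k\}$. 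At $x=\lambda^{[N]}_k$ the Wronskian equals $P'_{N+1}(\lambda^{[N]}_k)P_N(\lambda^{[N]}_k)$ directly, which is positive by the sign count, so $W>0$ on all of~$\R$. The two particular formulas then drop out by specializing at $\lambda^{[N]}_k$ (where $P_{N+1}=0$) and at $\lambda^{[N-1]}_k$ (where $P_N=0$), the latter forcing $-P_{N+1}P'_N>0$, i.e., $(P_{N+1}P'_N)|_{\lambda^{[N-1]}_k}<0$.

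For part (iii), the confluent Christoffel--Darboux identity \eqref{eq:CD2_confluent} gives
\begin{align*}
\alpha_N\beta_N\sum_{n=0}^{N}Q_{n,N}(x)R_{n,N}(x)=P'_{N+1}(x)P_N(x)-P'_N(x)P_{N+1}(x),
\end{align*}
and the right-hand side is positive by part (ii), so (iii) follows with no further work.

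The main obstacle is the sign count for the residues $c_k$ in part (ii); once one is careful that the interlacing of zeros forces $P_N$ and $P'_{N+1}$ to alternate signs in lockstep at the nodes~$\lambda^{[N]}_k$, the rest is a formal computation. Parts (i) and (iii) are then essentially bookkeeping, quoting Theorem~\ref{teo:eigennvalues_TN_2} and \eqref{eq:CD2_confluent} respectively.
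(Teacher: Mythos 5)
Your proof is correct and follows essentially the same route as the paper: interlacing from Theorem \ref{teo:eigennvalues_TN_2}, positivity of the Wronskian via the partial-fraction expansion of $P_N/P_{N+1}$ with same-sign residues, and then the confluent Christoffel--Darboux identity \eqref{eq:CD2_confluent}. The only (harmless) difference is that you fix the sign of each residue $c_k$ directly from the alternating signs of $P_N$ and $P'_{N+1}$ at the nodes $\lambda^{[N]}_k$, whereas the paper only observes that the residues share a common sign and then pins down positivity of the Wronskian from its monic leading behaviour $x^{2N}+O(x^{2N-1})$.
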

	\begin{proof}
	\begin{enumerate}
		\item 				Given that $T^{[N]}$ is oscillatory the polynomial $P_{N+1}$ interlaces $P_N$, see Theorem \ref{teo:eigennvalues_TN_2}.
		\item As the polynomials interlace its Wronskian $P_{N+1}'P_N-P_N'P_{N+1}$ has constant sign for $x\in\R$\footnote{In terms of $\pi^{[N]}_k\coloneq\frac{P_{N+1}}{x-\lambda^{[N]}_k}=\prod_{l\neq k}\big(x-\lambda^{[N]}_l\big)$ we have $P_N=\sum_{k=1}^{N+1} b_k \pi^{[N]}_k$ with $b_l=\frac{P_N(\lambda^{[N]}_l)}{\pi_l^{[N]}(\lambda^{[N]}_l)}\neq 0$ and that, as these polynomials interlaces, all the $b_k$ have the same sign; indeed, $\sgn P_N(\lambda^{[N]}_l)=-\sgn P_N(\lambda^{[N]}_{l+1})$ by interlacing and $\sgn \pi_l^{[N]}(\lambda^{[N]}_l)=-\sgn \pi^{[N]}_{l+1}(\lambda^{[N]}_{l+1})$ by definition. Consequently,  $\frac{P_N}{P_{N+1}}=\sum_{k=1}^{N+1}\frac{b_k}{x-\lambda_k^{[N]}}$, so that
		$P_{N+1}'P_N-P_N'P_{N+1}=P_{N+1}^2\Big(\frac{P_N}{P_{N+1}}\Big)'=-P_{N+1}^2\sum_{k=1}^{N+1}\frac{b_k}{(x-\lambda_k^{[N]})^2}=
		-\sum_{k=1}^{N+1}b_k (\pi^{[N]}_k)^2$, and the result follows.} 
	and, as the characteristic polynomials are monic, we have that $P_{N+1}'P_N-P_N'P_{N+1}=x^{2N}+O(x^{2N-1})$ for $|x|\to\infty$. Hence, the Wronskian is positive and 
		$P_N\big(\lambda^{[N]}_k\big)P_N'\big(\lambda^{[N]}_k\big)>0$ and $P_N'\big(\lambda^{[N-1]}_k\big)P_N\big(\lambda^{[N-1]}_k\big)>0$. 
		\item From \eqref{eq:CD2_confluent} we get $\sum_{n=0}^{N}Q_{n,N}R_{n,N}= \frac{1}{\alpha_N \beta_N }\big(P'_{N+1}P_{N}-P'_{N}P_{N+1}\big)$ and the result follows immediately.
	\end{enumerate}
\end{proof}

				We now explore some consequences that a positive bidiagonal factorization has. For that aim we introduce the idea of Darboux transformation of a banded Hessenberg matrix. Darboux transformations for banded Hessenberg matrices (beyond the tridiagonal situation) were discussed in \cite{dolores}. In \cite{proximo} for the tetradiagonal case, and corresponding multiple orthogonal polynomials in the step-line with two weights, the PBF factorization is given in terms of the values of the orthogonal polynomials of type I and II at $0$ and, consequently, an spectral interpretation of the Darboux transformation is given.
			\begin{defi}[Darboux transformations of banded matrices]
				Let us assume that $T$ admits a bidiagonal factorization (not necessarily positive). For each of its truncations $T^{[N]}$ we consider a chain of new auxiliary matrices, called Darboux transformations, given by the consecutive permutation of the unitriangular matrices in the factorization~\eqref{eq:bidiagonal_HessenbergN},
				\begin{align}\label{eq:Darboux+}
					\left\{\begin{aligned}
						\hat T^{[N,+1]}&=L_2 ^{[N]}\cdots L_p ^{[N]} \Delta^{[N]} U_q^{[N]} \cdots U_1^{[N]} L_1^{[N]}, \\
						\hat T^{[N,+2]}&=L_3 ^{[N]}\cdots L_p ^{[N]} \Delta^{[N]} U_q^{[N]} \cdots U_1^{[N]}L_1^{[N]}L_2 ^{[N]}, \\ &\hspace*{5pt} \vdots\\
						\hat T^{[N,+(p-1)]}&=L_p ^{[N]} \Delta^{[N]} U_q^{[N]} \cdots U_1^{[N]}L_1^{[N]}L_2 ^{[N]} \cdots L_{p-1} ^{[N]},\\
						\hat T^{[N,+p]}&= \Delta^{[N]} U_q^{[N]} \cdots U_1^{[N]}L_1^{[N]}L_2 ^{[N]} \cdots L_{p} ^{[N]},
					\end{aligned}\right.
				\end{align}
			and 
					\begin{align}\label{eq:Darboux+}
				\left\{\begin{aligned}
					\hat T^{[N,-1]}&=U_1^{[N]}L_1 ^{[N]}\cdots L_p ^{[N]} \Delta^{[N]} U_q^{[N]} \cdots U_2^{[N]} , \\
					\hat T^{[N,-2]}&=U_2^{[N]}U_1^{[N]}L_1^{[N]}\cdots L_p ^{[N]} \Delta^{[N]} U_q^{[N]} \cdots U_3^{[N]}, \\ &\hspace*{5pt} \vdots\\
					\hat T^{[N,-(q-1)]}&= U_{q-1}^{[N]} \cdots U_1^{[N]}L_1^{[N]}L_2 ^{[N]} \cdots L_{p} ^{[N]}\Delta^{[N]}U_{q}^{[N]} ,\\
	\hat T^{[N,-q]}&= U_{q}^{[N]} \cdots U_1^{[N]}L_1^{[N]}L_2 ^{[N]} \cdots L_{p} ^{[N]}\Delta^{[N]} .
			\end{aligned}\right.
			\end{align}
			\end{defi}
			%and denote by $\hat B_{N+1}$ its corresponding second kind polynomial. Consequently, 
			%The auxiliary matrices are ``Darboux transformations'' of the submatrices of the banded Hessenberg matrix. %, see \S \ref{S:Darboux}. 
			\begin{lemma}
				 Darboux transformations are banded matrices with only its first $p$ subdiagonals, main diagonal and $q$ superdiagonals possibly different from zero.
		If $T$ admits a PBF then entries in these diagonals are positive.	\end{lemma}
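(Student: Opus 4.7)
The plan is to prove both parts of the lemma by a single uniform argument. The key observation is that every Darboux transformation $\hat T^{[N,\pm j]}$ is assembled from exactly the same ingredients as $T^{[N]}$ itself: the $p$ lower bidiagonal unit-diagonal factors $L_1,\dots,L_p$, the diagonal factor $\Delta^{[N]}$, and the $q$ upper bidiagonal unit-diagonal factors $U_1,\dots,U_q$, merely cyclically permuted. Hence both the bandwidth count and the positivity count are invariant under the rearrangement.

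For the bandwidth assertion, I would first record an elementary multiplication rule: if $M$ is banded with at most $s$ subdiagonals and $t$ superdiagonals, then $L_kM$ and $ML_k$ have at most $s+1$ subdiagonals and still $t$ superdiagonals, while $U_kM$ and $MU_k$ have at most $t+1$ superdiagonals and still $s$ subdiagonals, and multiplication by $\Delta^{[N]}$ preserves both counts. This is immediate from the one-term expansions $(L_kM)_{i,j}=M_{i,j}+L_{k|i-1}M_{i-1,j}$ and its three symmetric versions. Starting from $\Delta^{[N]}$ (the case $s=t=0$) and composing with the $p$ lower bidiagonal and $q$ upper bidiagonal factors in any order, one obtains a matrix with at most $p$ subdiagonals and $q$ superdiagonals, regardless of the ordering. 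This settles the band structure for every Darboux transformation listed in \eqref{eq:Darboux+}.

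For the positivity assertion under PBF, I would pass to the multi-factor expansion
\[
M_{i,j}=\sum_{i=r_0,r_1,\dots,r_\ell=j}(F_1)_{r_0,r_1}(F_2)_{r_1,r_2}\cdots (F_\ell)_{r_{\ell-1},r_\ell},
\]
where $F_1,\dots,F_\ell$ is the factor sequence of the relevant Darboux transformation. Every bidiagonal factor admits only two types of nonzero transitions (diagonal, or its unique nonzero off-diagonal), and $\Delta^{[N]}$ admits only the diagonal transition; under PBF every admissible transition carries a strictly positive weight. To produce a strictly positive contribution at the entry $(i+k,i)$ with $0\leq k\leq p$, I would select any $k$ of the $p$ $L$-factors to use their subdiagonal transition, let every other $L$-, $U$-, and $\Delta$-factor take its diagonal transition, and let no $U$-factor step up. This yields at least one admissible path of strictly positive weight, while all other paths contribute nonnegatively, so the entry is strictly positive. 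The symmetric choice handles the first $q$ superdiagonals, and the all-diagonal path handles the main diagonal, giving the stated positivity.

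The main subtlety I expect is checking that the path combinatorics remain feasible once $L$'s and $U$'s are interleaved in the Darboux transformations rather than cleanly ordered as in $T^{[N]}$. The elementary multiplication rule used in the bandwidth step resolves this: the diagonal transition is available at every factor independently of the ordering, so the explicit positive paths constructed above always exist and no case-by-case inspection of the individual matrices $\hat T^{[N,\pm j]}$ is required.
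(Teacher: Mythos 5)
Your proposal is correct, and it is essentially the argument the paper has in mind: the paper's proof of this lemma is the one-line remark that ``it is a simple computation recalling the positivity of the nonzero entries,'' and your path expansion through the factor sequence (net index change bounded by the number of $L$- and $U$-factors for the band structure, plus one explicitly exhibited positive path with all other contributions nonnegative for positivity) is exactly that computation carried out in full, uniformly over all the cyclic rearrangements $\hat T^{[N,\pm j]}$. No gaps; your treatment is simply more detailed than the paper's.
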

		\begin{proof}
			It is a simple computation recalling the positivity of the nonzero entries.
		\end{proof}
		\begin{lemma}\label{lemma:poly}
			Let us assume that $T$ has a PBF.
			Then, for $k\in\{1,\dots,p\}$, we find:
			\begin{enumerate}
				\item The Darboux transformations $\hat{ T}^{[N,+a]}$, $a\in\{1,\dots, p\}$, $\hat{ T}^{[N,-b]}$, $b\in\{1,\dots, q\}$ are oscillatory.
				\item The characteristic polynomial of the Darboux transformations 
				$\hat{ T}^{[N,+a]}$, $a\in\{1,\dots, p\}$, $\hat{ T}^{[N,-b]}$, $b\in\{1,\dots, q\}$ is $P_{N+1}$. 
				\item If $w,u$ are left and right eigenvectors of $ T^{[N]}$, respectively, then $\hat w=w L_{1}^{[N]}\cdots L_{a}^{[N]}$ is a left eigenvector of $\hat{ T}^{[N,+a]}$
				 and $\hat u=U_{b}^{[N]}\cdots U_{1}^{[N]}u$ is a right eigenvector of $\hat{ T}^{[N,-b]}$.
			\end{enumerate}
		\end{lemma}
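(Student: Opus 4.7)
My plan is to reduce all three claims to the single observation that each Darboux transformation is similar to $T^{[N]}$ via an explicit conjugation by a product of the very same bidiagonal factors. Writing $\mathcal{L}_a \coloneq L_1^{[N]}\cdots L_a^{[N]}$ and $\mathcal{U}_b \coloneq U_b^{[N]}\cdots U_1^{[N]}$, a direct inspection of the definitions \eqref{eq:Darboux+} together with the factorization \eqref{eq:bidiagonal_HessenbergN} should yield the identities
\begin{align*}
\hat T^{[N,+a]} &= \mathcal{L}_a^{-1}\, T^{[N]}\, \mathcal{L}_a, & \hat T^{[N,-b]} &= \mathcal{U}_b\, T^{[N]}\, \mathcal{U}_b^{-1}
\end{align*}
at once, since all telescoping of $L_k^{[N]}$'s or $U_k^{[N]}$'s inside $T^{[N]}$ is unobstructed. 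Part (ii) is then immediate: similar matrices share their characteristic polynomials, so both families of Darboux transformations inherit $P_{N+1}$ from $T^{[N]}$.

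For (i), I would argue that each factor $L_k^{[N]}$, $U_k^{[N]}$ is a unitriangular bidiagonal matrix with positive off-diagonal entries, and $\Delta^{[N]}$ is a positive diagonal matrix, so each of these factors belongs to InTN. By the closure of InTN under multiplication (Cauchy--Binet, as recalled in the preliminaries), every cyclic rearrangement of these factors is again InTN, hence nonsingular and totally nonnegative. The preceding lemma guarantees that the first $p$ subdiagonals and the first $q$ superdiagonals of each Darboux transformation are strictly positive, in particular the first subdiagonal and first superdiagonal. The Gantmacher--Krein criterion, Theorem \ref{teo:Gantmacher--Krein Criterion}, then delivers the oscillatoriness of every $\hat T^{[N,+a]}$ and $\hat T^{[N,-b]}$.

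Finally, (iii) will follow formally from the similarity relations above: starting from $T^{[N]} u = \lambda u$ and left-multiplying by $\mathcal{U}_b$, the rearrangement $\hat T^{[N,-b]}\, \mathcal{U}_b = \mathcal{U}_b\, T^{[N]}$ yields $\hat T^{[N,-b]}(\mathcal{U}_b u) = \lambda(\mathcal{U}_b u)$; starting from $w T^{[N]} = \lambda w$ and right-multiplying by $\mathcal{L}_a$, the rearrangement $T^{[N]}\,\mathcal{L}_a = \mathcal{L}_a\, \hat T^{[N,+a]}$ yields $(w\mathcal{L}_a)\, \hat T^{[N,+a]} = \lambda(w\mathcal{L}_a)$. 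I do not anticipate a substantive obstacle; the only mildly delicate step is the cyclic-permutation bookkeeping in \eqref{eq:Darboux+} needed to extract the two similarity identities cleanly, after which the entire argument is essentially formal.
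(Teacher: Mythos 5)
Your proposal is correct and follows essentially the same route as the paper: the cyclic rearrangements in the definition are exactly the similarity relations $\hat T^{[N,+a]}=\mathcal L_a^{-1}T^{[N]}\mathcal L_a$ and $\hat T^{[N,-b]}=\mathcal U_b T^{[N]}\mathcal U_b^{-1}$, which give (ii) and (iii), while (i) is obtained from closure of InTN under products together with positivity of the first sub- and superdiagonals and the Gantmacher--Krein criterion, just as in the paper's proof.
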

		\begin{proof}
			\begin{enumerate}
				\item Each bidiagonal factor belongs to InTN. Then, the Darboux transformation $\hat{ T}^{[N,k]}$ is a product of matrices in InTN and, consequently, belongs to InTN. Moreover, the entries in the first superdiagonal are all $1$, while the entries in the first subdiagonal are sum of products of $\alpha$'s. Thus, all entries in these two diagonals are positive. According to Gantmacher--Krein Criterion is an oscillatory matrix.
				\item As
				${\hat T}^{[N,+a]}=( L_{1}^{[N]}\cdots L_{a}^{[N]} )^{-1} T^{[N]} L_{1}^{[N]}\cdots L_{a}^{[N]} $ its characteristic polynomial is $P_{N+1}$. 
				Similarly, as 	${\hat T}^{[N,-b]}=U_{b}^{[N]}\cdots U_{1}^{[N]}T^{[N]} (U_{b}^{[N]}\cdots U_{1}^{[N]})^{-1} $ the corresponding characteristic polynomial is again $P_{N+1}$.
				%Consequently, from the theory of oscillatory matrices, $B_{N+1}$ also interlaces $\hat B_{N+1}^{[1]}$.
				
				\item
				We see that
				\begin{align*}
					\lambda \hat w&=\lambda w L_{1}^{[N]}\cdots L_{a}^{[N]} =w L^{[N]}_{1} \cdots L_{a}^{[N]} L^{[N]}_{a+1} \cdots L_{p}^{[N]} \Delta^{[N]}U_q^{[N]}\cdots U_1^{[N]}L_{1}^{[N]}\cdots L_{a}^{[N]} =\hat w \hat{ T}^{[N]},\\
					\lambda \hat u&=\lambda U_{b}^{[N]}\cdots U_{1}^{[N]} u= U^{[N]}_{b} \cdots U^{[N]}_{1} L^{[N]}_{1} \cdots L_{p}^{[N]}\Delta^{[N]}U_q^{[N]}\cdots U_{b+1}^{[N]} U_b^{[N]}\cdots U_1^{[N]} u= \hat{ T}^{[N]}\hat u.
				\end{align*}
			\end{enumerate}
		\end{proof}
		
		In order to show the positivity of the Christoffel coefficients we require of some preliminary notation.
		
		\begin{defi}\label{def:L_alpha}
			Let us define the matrices
			\begin{align*}
				 \Lambda&\coloneq\begin{bNiceMatrix}
					\Lambda^{(1)}& \Cdots& \Lambda^{(p)}
				\end{bNiceMatrix}\in\R^{p\times p}, &
			\Upsilon&\coloneq \begin{bNiceMatrix}
				\Upsilon^{(1)}\\\Vdots\\ \Upsilon^{(q)}
			\end{bNiceMatrix}\in\R^{q\times q}, 
			\end{align*}
			with 
			\begin{align*}
				\Lambda^{(1)} &\coloneq \begin{bNiceMatrix}
					1\\0\\\Vdots\\0
				\end{bNiceMatrix},& \Lambda^{(k)}&\coloneq\frac{1}{r_k}
				L_1^{[p-1]} \cdots L_{k-1}^{[p-1]} 
				\begin{bNiceMatrix}
					1\\0\\\Vdots\\0
				\end{bNiceMatrix}, & r_k\coloneq 
				& L_{k|0}L_{k-1|1}\cdots L_{1|k-1},
				& k&\in\{2,\dots, p\},
			\end{align*}
		and
		\begin{align*}
\hspace*{-.5cm}\begin{aligned}
					\Upsilon^{(1)}&\coloneq \begin{bNiceMatrix}
					1&0&\Cdots &0
				\end{bNiceMatrix},& \Upsilon^{(k)}&\coloneq\frac{1}{s_k}
				\begin{bNiceMatrix}
				1& 0 &\Cdots&0
			\end{bNiceMatrix}	U_1^{[q-1]} \cdots U_{k-1}^{[q-1]} 
			, & s_k\coloneq %& \alpha_{k+1} \alpha_{k+(p+1)} \alpha_{k-1+2(p+1)} \alpha_{k-2+3(p+1)}\cdots \alpha_{2+(k-1)(p+1)},
				& U_{k|0}U_{k-1|1}\cdots U_{1|k-1},
				& k&\in\{2,\dots, q\}.
\end{aligned}
			\end{align*}
		\end{defi}
		
		\begin{lemma}
		The matrices $\Lambda$ and $\Upsilon$ are positive upper and lower unitriangular matrices, respectively.
		\end{lemma}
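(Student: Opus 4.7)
The plan is to compute explicitly the action of the lower bidiagonal factors $L_j^{[p-1]}$ on the first standard basis vector $e_1\in\R^p$ and to read off the sign and support pattern of the resulting column $\Lambda^{(k)}$; the claim for $\Upsilon$ then follows by an entirely analogous argument, acting on the row vector $e_1^{\top}$ with the upper bidiagonal factors $U_j^{[q-1]}$ from the right.

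I would first prove by induction on $j=1,\dots,k-1$ that the intermediate vector
\begin{align*}
	v_j\coloneq L_{k-j}^{[p-1]}\cdots L_{k-1}^{[p-1]}e_1
\end{align*}
has its support exactly in positions $1,2,\dots,j+1$ with strictly positive entries there. The base case $j=1$ is immediate, since $L_{k-1}^{[p-1]}e_1=e_1+L_{k-1|0}\,e_2$. For the inductive step, the factor $L_{k-j-1}^{[p-1]}$ is lower bidiagonal unitriangular with positive subdiagonal entries, so left multiplication preserves the existing positive entries and adds to each position a positive multiple of the entry immediately above; in particular, the entry at the new top position $j+2$ equals the subdiagonal coefficient of $L_{k-j-1}$ at the appropriate height times the previous (positive) top entry, while entries below $j+2$ remain $0$.

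A second induction, tracking only the leading position, shows that at every step this extreme entry acquires exactly one new subdiagonal coefficient from the incoming factor. Hence, after all $k-1$ factors $L_1,\dots,L_{k-1}$ have been applied, the entry at position $k$ is a single product of one subdiagonal coefficient from each factor; this product is precisely the positive scalar $r_k$. Therefore $\Lambda^{(k)}=v_{k-1}/r_k$ satisfies $(\Lambda^{(k)})_k=1$, $(\Lambda^{(k)})_i>0$ for $i<k$, and $(\Lambda^{(k)})_i=0$ for $i>k$.

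Stacking these columns, $\Lambda=[\Lambda^{(1)}\mid\cdots\mid\Lambda^{(p)}]$ has $1$s on its main diagonal, strictly positive entries strictly above, and zero entries strictly below; hence $\Lambda$ is positive upper unitriangular. The same bookkeeping, applied to $\Upsilon^{(k)}=s_k^{-1}\,e_1^{\top}U_1^{[q-1]}\cdots U_{k-1}^{[q-1]}$ (with the superdiagonal coefficients $U_{j|i}$ now playing the role formerly played by the subdiagonals), shows that $\Upsilon$ is positive lower unitriangular. The only real obstacle is careful index tracking: one must verify that the unique multiplicative chain reaching the extreme position assembles its factors in exactly the pattern matching the definition of $r_k$ (respectively $s_k$), so that the stated normalization produces a genuine $1$ on the diagonal.
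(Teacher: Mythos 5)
Your induction is the right argument, and since the paper states this lemma without proof (it is meant to follow by direct computation from Definition \ref{def:L_alpha}), there is no alternative argument to compare against: showing inductively that $L_{k-j}^{[p-1]}\cdots L_{k-1}^{[p-1]}e_1$ is supported on positions $1,\dots,j+1$ with strictly positive entries, the top entry picking up exactly one subdiagonal coefficient from each incoming factor, is exactly what is needed, and the analogous bookkeeping on row vectors handles $\Upsilon$.

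However, the step you postponed (``careful index tracking'') is precisely where the proposal, as written, does not close. Carrying out your induction, the corner entry of $L_1^{[p-1]}\cdots L_{k-1}^{[p-1]}e_1$ is $L_{k-1|0}\,L_{k-2|1}\cdots L_{1|k-2}$, a product of $k-1$ coefficients, whereas the quantity $r_k=L_{k|0}L_{k-1|1}\cdots L_{1|k-1}$ displayed in Definition \ref{def:L_alpha} has $k$ factors and contains $L_{k|0}$, which cannot occur because $L_k$ does not appear in the product; already for $k=2$ the corner entry is $L_{1|0}$ while the displayed $r_2$ is $L_{2|0}L_{1|1}$. So the assertion that the corner product ``is precisely $r_k$'' is false for the formula as printed, and with that normalization $\Lambda$ would be positive upper triangular but not unitriangular. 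The resolution is that the displayed $r_k$ (and likewise $s_k$) is an index slip in the paper: the intended normalizer is the corner entry itself, as one can see from the explicit normalizations $L_{1|0}$ and $L_{1|1}L_{2|0}$ (respectively $U_{1|0}$ and $U_{1|1}U_{2|0}$) used in the proof of Theorem \ref{teo:positivity}. Your proof is complete once you compute the corner entry explicitly, as above, and normalize by it rather than by the printed $r_k$, $s_k$. For $\Upsilon$ note also that the order $e_1^{\top}U_1^{[q-1]}\cdots U_{k-1}^{[q-1]}$ in the definition differs from the order $U_{k-1}\cdots U_1$ used later in the paper; this changes the value of the corner product but affects neither positivity nor lower unitriangularity.
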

	
	\begin{teo}[Christoffel numbers positivity]\label{teo:positivity}
		Let us assume that $T$ has a PBF and choose the matrices of initial conditions as
		\begin{align}\label{eq:tunnig_initial_conditions}
			\nu^{-\top}&=	\Lambda\mathcal A, &\xi^{-1}&=\mathcal B	\Upsilon, 
		\end{align}
		for some upper and lower unitriangular nonnegative matrices $\mathcal A\in\R^{p\times p}$ and $\mathcal B\in\R^{q\times q}$, respectively. Then, 
		\begin{align*}
			\rho^{[N]}_{k,b}&>0, &\mu^{[N]}_{k,a}&>0, &k&\in{1,\dots,N+1}, &a&\in\{1,\dots, p\}, & b&\in\{1,\dots,q\}.
		\end{align*}
	\end{teo}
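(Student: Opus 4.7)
The goal is to show that the vectors $\vec\mu_k = (\mu^{[N]}_{k,1},\dots,\mu^{[N]}_{k,p})^\top$ and $\vec\rho_k = (\rho^{[N]}_{k,1},\dots,\rho^{[N]}_{k,q})^\top$ are entrywise strictly positive for every $k\in\{1,\dots,N+1\}$.

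\medskip

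First, substitute the tuning $\nu^{-\top}=\Lambda\mathcal A$ and $\xi^{-1}=\mathcal B\Upsilon$ into \eqref{eq:kcomponentelefteigenII}. The result is
\begin{align*}
	\vec\mu_k &= \mathcal A^{\top}\Lambda^{\top}\vec w_k^{\,\langle p\rangle}, &
	\vec\rho_k &= \mathcal B\,\Upsilon\,\vec u_k^{\,\langle q\rangle},
\end{align*}
where $\vec w_k^{\,\langle p\rangle}$ and $\vec u_k^{\,\langle q\rangle}$ collect the first $p$ and $q$ entries of the biorthogonal eigenvectors. Since $\mathcal A$ and $\mathcal B$ are nonnegative unitriangular matrices, it suffices to prove that every component of $\Lambda^{\top}\vec w_k^{\,\langle p\rangle}$ and of $\Upsilon\vec u_k^{\,\langle q\rangle}$ is strictly positive.

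\medskip

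Next, unwinding Definition \ref{def:L_alpha}, one checks that
\begin{align*}
	(\Lambda^{\top}\vec w_k^{\,\langle p\rangle})_b &= \frac{1}{r_b}\bigl(w_k^{\langle N\rangle}L_1^{[N]}\cdots L_{b-1}^{[N]}\bigr)_1, &
	(\Upsilon\vec u_k^{\,\langle q\rangle})_c &= \frac{1}{s_c}\bigl(U_{c-1}^{[N]}\cdots U_1^{[N]}u_k^{\langle N\rangle}\bigr)_1.
\end{align*}
(Only the first $p$ or $q$ coordinates of the semi-infinite eigenvectors are involved because the bidiagonal factors have banded structure, so truncation commutes with taking the first entry.) By Lemma \ref{lemma:poly}(iii), the transformed vectors $w_k^{\langle N\rangle}L_1^{[N]}\cdots L_{b-1}^{[N]}$ and $U_{c-1}^{[N]}\cdots U_1^{[N]}u_k^{\langle N\rangle}$ are left and right eigenvectors, associated with the eigenvalue $\lambda_k^{[N]}$, of the Darboux transformations $\hat T^{[N,+(b-1)]}$ and $\hat T^{[N,-(c-1)]}$, both of which are oscillatory by Lemma \ref{lemma:poly}(i). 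Theorem \ref{teo:eigenvectorII}(i) therefore guarantees that these first entries are nonzero.

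\medskip

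Strict positivity, not mere nonvanishing, is the substantive point. Using the explicit formula $w_{k,n}^{\langle N\rangle}=\alpha_N Q_{n-1,N}(\lambda_k^{[N]})/[P_N P'_{N+1}](\lambda_k^{[N]})$ together with the Wronskian positivity $P_{N+1}'P_N-P_N'P_{N+1}>0$ (Proposition \ref{pro:interlacing}), the sign of the numerator $(w_k^{\langle N\rangle}L_1^{[N]}\cdots L_{b-1}^{[N]})_1$ reduces to $\sgn\alpha_N$ times the sign of a particular linear combination of the determinants $Q_{j,N}(\lambda_k^{[N]})$, whose coefficients are the entries of the first column of $L_1^{[N]}\cdots L_{b-1}^{[N]}$. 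By linearity of the determinant in its first row, this combination is itself a $p\times p$ determinant built from the recursion polynomials; the tuning $\nu^{-\top}=\Lambda$ is chosen precisely so that its first row reduces to a standard basis vector, and the resulting determinant coincides, up to the sign $\sgn\alpha_N$, with the characteristic polynomial of an oscillatory principal submatrix of $\hat T^{[N,+(b-1)]}$ that strictly interlaces $P_{N+1}$. Evaluated at $\lambda_k^{[N]}$, the interlacing forces this characteristic polynomial to have a definite sign that cancels $\sgn\alpha_N$, yielding positivity uniformly in $k$. The same scheme, with $U$-factors and the polynomials $B^{(b)}_n$ replacing $L$-factors and $A^{(a)}_n$, handles $(\Upsilon\vec u_k^{\,\langle q\rangle})_c$.

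\medskip

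\textbf{Main obstacle.} The delicate part is the last step: the determinantal identification of $(w_k^{\langle N\rangle}L_1^{[N]}\cdots L_{b-1}^{[N]})_1$ with (a signed multiple of) the characteristic polynomial of an oscillatory principal submatrix, whose interlacing with $P_{N+1}$ (via Proposition \ref{pro:interlacing} applied to the Darboux transform) pins down the sign at every $\lambda_k^{[N]}$. Without this identification, Theorem \ref{teo:eigenvectorII} only yields nonvanishing; with it, positivity follows for \emph{every} $k$, not just the Perron--Frobenius case $k=1$.
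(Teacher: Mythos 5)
Your reduction is the same as the paper's: substitute $\nu^{-\top}=\Lambda\mathcal A$, $\xi^{-1}=\mathcal B\Upsilon$ into \eqref{eq:kcomponentelefteigenII}, observe that nonnegative unitriangular $\mathcal A,\mathcal B$ preserve strict positivity, identify the columns of $\Lambda$ (rows of $\Upsilon$) with first entries of the Darboux-transformed eigenvectors $w_k^{\langle N\rangle}L_1^{[N]}\cdots L_{b-1}^{[N]}$ and $U_{c-1}^{[N]}\cdots U_1^{[N]}u_k^{\langle N\rangle}$, and use Lemma \ref{lemma:poly} plus Theorem \ref{teo:eigenvectorII} to get that these first entries are nonzero. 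Up to this point you are aligned with the paper.

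The sign-pinning step, however, is where your argument breaks, and it is precisely the step you flag as the ``main obstacle'' without proving it. You assert that the linear combination $\sum_j c_j Q_{j,N}$ (coefficients from the first column of $L_1^{[N]}\cdots L_{b-1}^{[N]}$) is, by multilinearity, a determinant that coincides up to sign with the characteristic polynomial of an oscillatory principal submatrix of $\hat T^{[N,+(b-1)]}$ which \emph{strictly interlaces} $P_{N+1}$. This cannot be correct: for $p\geqslant 2$ the polynomials $Q_{j,N}$ with $j\leqslant p-1$ have degree roughly $N-\lceil (N+1)/p\rceil$, far below $N$, so the combination can be neither the degree-$N$ characteristic polynomial of an $N\times N$ principal submatrix nor a polynomial interlacing $P_{N+1}$ (interlacing forces degree exactly $N$). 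The classical determinantal fact behind such identifications is $\operatorname{adj}(\lambda I-M)=\chi_M'(\lambda)\,u w/(wu)$, which relates the characteristic polynomial of $M(1)$ to the \emph{product} $u_1w_1$ of the first entries of the right and left eigenvectors, not to a single left-eigenvector entry; so there is no route of this kind to the sign of $(w_k^{\langle N\rangle}L_1\cdots L_{b-1})_1$ alone. The paper pins the sign differently, by oscillation theory of eigenvectors: right-multiplication by the lower unitriangular $L$'s (and left-multiplication by the upper unitriangular $U$'s) leaves the \emph{last} entry of the eigenvector unchanged, so one may normalize it to $1$; Theorem \ref{teo:eigenvectorII} then says the eigenvector of the $k$-th largest eigenvalue of the oscillatory Darboux transform has exactly $k-1$ sign changes with nonzero first entry, forcing that first entry to have sign $(-1)^{k-1}$; combining with $\sgn P_N\big(\lambda_k^{[N]}\big)=(-1)^{k-1}$ from the interlacing of Proposition \ref{pro:interlacing} yields $\alpha_N\big(Q_{0,N}+\cdots\big)\big(\lambda_k^{[N]}\big)>0$ and $\beta_N\big(R_{0,N}+\cdots\big)\big(\lambda_k^{[N]}\big)>0$ for every $k$, which is the positivity you need. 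To repair your proof, replace the determinantal/interlacing identification with this sign-change argument (last entry preserved and normalized, $k-1$ sign changes, sign of $P_N$ at the nodes).
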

	\begin{proof}
				Recall that the Christoffel numbers can expressed, in terms of the initial condition matrices $\xi$ and $\nu$, through the formulas
		\begin{align*}
		\begin{bNiceMatrix}
		\mu^{[N]}_{k,1} &\Cdots & 	\mu^{[N]}_{k,p}
	\end{bNiceMatrix}		&= 	\begin{bNiceMatrix}
			w^{\langle N\rangle}_{k,1} &\Cdots & 	w^{\langle N\rangle}_{k,p}
		\end{bNiceMatrix}\nu^{-\top}, &
	\begin{bNiceMatrix}
		\rho^{[N]}_{k,1} \\\Vdots \\	\rho^{[N]}_{k,q}
	\end{bNiceMatrix}&	= 	\xi^{-1}\begin{bNiceMatrix}
	u^{\langle N\rangle}_{k,1} \\\Vdots \\	u^{\langle N\rangle}_{k,q}
\end{bNiceMatrix},
		\end{align*}
	that relates these Christoffel numbers with the corresponding biorthogonal families of right an left eigenvectors.
	Notice that the entries of these biorthogonal right and left eigenvectors can be written as
$w^{\langle N\rangle}_{k,a}=\alpha_N \left.\frac{Q_{a-1,N}}{P_{N+1}'P_N}\right|_{x=\lambda_k^{[N]}}$ and $u^{\langle N\rangle}_{k,b}=\beta_NR_{b-1,N}\big(\lambda_k^{[N]}\big)$,  see \eqref{eq:kcomponentelefteigenI}.
Hence, recall iii) in Proposition \ref{pro:interlacing}, the Christoffel numbers are positive if and only if 
	\begin{align}\label{eq:pairing}
	&\beta_N 	\xi^{-1} \begin{bNiceMatrix}
		R_{0,N}	 \\\Vdots \\		R_{q-1,N}
		\end{bNiceMatrix}, & &	\frac{1}{\beta_N }\ \begin{bNiceMatrix}
		Q_{0,N}	&\Cdots & 		Q_{p-1,N}	
	\end{bNiceMatrix}\nu^{-\top}
	\end{align}
are positive vectors at the points $x=\lambda_k^{[N]}$, $k\in\{1,\dots,N+1\}$.
We will show now that is possible to choose the initial condition matrices $\nu,\xi$ such that this
holds true.

Now, we consider left and right eigenvectors with last entry normalized to $1$
	\begin{align*}
	&\begin{bNiceMatrix}
	\left.	\frac{Q_{0,N}}{Q_{N,N}} \right|_{x=\lambda_k^{[N]}}&\left.\frac{Q_{1,N} }{Q_{N,N}}\right|_{x=\lambda_1^{[N]}}& \Cdots & 1 
	\end{bNiceMatrix}, &
\begin{bNiceMatrix}
\left.	\frac{R_{0,N}}{R_{N,N}}\right|_{x=\lambda_k^{[N]}} \\ 
\left.\frac{R_{1,N}}{R_{N,N}}\right|_{x=\lambda_k^{[N]}}\\
	 \Vdots \\ 1 
\end{bNiceMatrix}.
\end{align*}
	It is important 	to recall that according to Theorem \ref{teo:eigenvectorII} the last entry of any eigenvector is nonzero, i.e. so that 
		we can normalize the last entry to $1$. Despite, 
		this is not the biorthogonal normalization is interesting for our purposes.
	Recall that $Q_{N,N}=\alpha_N ^{-1}P_N$, $R_{N,N}=\beta_N ^{-1}P_N$	and that, according to Theorem \ref{teo:eigenvectorII}, the first eigenvector entries are not zero; i.e.,
	$	\alpha_N \left.	\frac{Q_{0,N}}{P_{N}} \right|_{x=\lambda_k^{[N]}}$, 
	$\beta_N \left.\frac{R_{0,N}}{P_{N}}\right|_{x=\lambda_k^{[N]}}\neq 0$.
		As the last entry is positive the change sign properties described in Theorem~\ref{teo:eigenvectorII} leads to
		\begin{align*}
		\alpha_N 	\left.\frac{Q_{0,N}}{P_{N}} \right|_{x=\lambda_1^{[N]}}&>0, & 
		\alpha_N 	\left.\frac{Q_{0,N}}{P_{N}} \right|_{x=\lambda_2^{[N]}}&<0,& 
	\alpha_N 	\left.\frac{Q_{0,N}}{P_{N}} \right|_{x=\lambda_3^{[N]}} &>0, & \\
	\beta_N 	\left.\frac{R_{0,N}}{P_{N}} \right|_{x=\lambda_1^{[N]}}&>0, & 
	\beta_N 	\left.\frac{R_{0,N}}{P_{N}} \right|_{x=\lambda_2^{[N]}}&<0,& 
	\beta_N 	\left.\frac{R_{0,N}}{P_{N}} \right|_{x=\lambda_3^{[N]}} &>0, & 
\end{align*}
and so on, alternating the sign. Now, as $T$ is oscillatory and the characteristic polynomial $P_{N+1}$ interlaces $P_N$ we have that 
$ \sgn P_N\big(\lambda_{k}^{[N]}\big)=(-1)^{k-1}$ so that 
		\begin{align*}
	\alpha_N 	Q_{0,N}\big(\lambda_k^{[N]}\big),
	\beta_N R_{0,N}\big(\lambda_k^{[N]}\big)&>0, & k&\in\{1,\dots,N+1\}.
\end{align*}

Now, we start using the Darboux transformations. 	Recall that $\hat{T}^{[N,\pm 1]}$ is an oscillatory matrix with characteristic polynomial $P_{N+1}$. Then, a left eigenvector of $T^{[N,+1]}$ for the eigenvalue $\lambda^{[N]}_k $ can be chosen as 
\begin{align*}
\begin{bNiceMatrix}	\left.	\alpha_N \frac{Q_{0,N}}{P_{N}} \right|_{x=\lambda_k^{[N]}}&\alpha_N \left.\frac{Q_{1,N} }{P_{N}}\right|_{x=\lambda_k^{[N]}}& \Cdots & 1 
\end{bNiceMatrix}L_1^{[N]} 
	&= \begin{bNiceMatrix}
	\alpha_N \left. 	\frac{(Q_{0,N}+ L_{1|0} Q_{1,N})}{P_{N}} \right|_{x=\lambda_k^{[N]}}& \Cdots & 1 
	\end{bNiceMatrix} ,
\end{align*}
and a right eigenvector of $T^{[N,-1]}$ for the eigenvalue $\lambda^{[N]}_k $ can be taken 
 as
 \begin{align*}
 U_1^{[N]} 	\begin{bNiceMatrix}	\left.	\beta_N \frac{R_{0,N}}{P_{N}} \right|_{x=\lambda_k^{[N]}}\\\beta_N \left.\frac{R_{1,N} }{P_{N}}\right|_{x=\lambda_k^{[N]}}\\ \Vdots \\ 1 
 	\end{bNiceMatrix}
 	&= \begin{bNiceMatrix}
 		\left. 	\beta_N \frac{(R_{0,N}+ U_{1|0} R_{1,N})}{P_{N}} \right|_{x=\lambda_k^{[N]}}\\\Vdots \\ 1 
 	\end{bNiceMatrix}.
 \end{align*} 
Using again the sign properties of the eigenvectors associated to an oscillatory matrix we get 
\begin{align*}
	\alpha_N 	\left.\frac{\frac{1}{L_{1|0} }Q_{0,N}+ Q_{1,N}}{P_{N}}\right|_{x=\lambda_1^{[N]}}&>0, &
	\alpha_N 		\left.\frac{\frac{1}{L_{1|0} }Q_{0,N}+Q_{1,N}}{P_{N}}\right|_{x=\lambda_2^{[N]}}&<0, &
	\alpha_N \left.\frac{\frac{1}{L_{1|0} }Q_{0,N}+ Q_{1,N}}{P_{N}}\right|_{x=\lambda_3^{[N]}}&>0, \\
	\beta_N 	\left.\frac{\frac{1}{U_{1|0} }R_{0,N}+ R_{1,N}}{P_{N}}\right|_{x=\lambda_1^{[N]}}&>0, &
	\beta_N 	\left.\frac{\frac{1}{U_{1|0} } R_{0,N}+ R_{1,N}}{P_{N}}\right|_{x=\lambda_2^{[N]}}&<0, 
	&\beta_N \left.\frac{\frac{1}{U_{1|0} }R_{0,N}+ R_{1,N}}{P_{N}}\right|_{x=\lambda_3^{[N]}}&>0,
\end{align*}
and recalling the sign of $P_N$ at the zeros of $P_{N+1}$ we get 
\begin{align*}
	\alpha_N \left.\Big(\frac{1}{L_{1|0} }Q_{0,N}+ Q_{1,N}\Big)\right|_{x=\lambda_k^{[N]}}, 
	\beta_N 	\left.\Big(\frac{1}{U_{1|0} }R_{0,N}+ R_{1,N}\Big)\right|_{x=\lambda_k^{[N]}}&>0, & k&\in\{1,\dots,N+1\}.
\end{align*}
Now we consider the matrices $\hat{T}^{[N,\pm 2]}$, both oscillatory matrices with characteristic polynomial $P_{N+1}$. Then, for $T^{[N,+2]}$, a corresponding left eigenvector for the eigenvalue $\lambda^{[N]}_k $ can be chosen as 
\begin{align*}
		\begin{bNiceMatrix}	\left.	\alpha_N \frac{Q_{0,N}}{P_{N}} \right|_{x=\lambda_k^{[N]}}&\alpha_N \left.\frac{Q_{1,N} }{P_{N}}\right|_{x=\lambda_k^{[N]}}& \Cdots & 1 
		\end{bNiceMatrix}L_1^{[N]} L_2^{[N]} 
	= \begin{bNiceMatrix}
	\alpha_N \left.	\frac{Q_{0,N}+(L_{1|0}+ L_{2|0})Q_{1,N}+ L_{1|1}L_{2|0} Q_{2,N}}{P_{N}}\right|_{x=\lambda_k^{[N]}} & \Cdots & 1 
	\end{bNiceMatrix},
\end{align*}
and for $T^{[N,-2]}$ a corresponding right eigenvector for the eigenvalue $\lambda^{[N]}_k $ can be taken as 
\begin{align*}
U_2U_1	\begin{bNiceMatrix}	\left.	\beta_N \frac{R_{0,N}}{P_{N}} \right|_{x=\lambda_k^{[N]}}\\\beta_N \left.\frac{R_{1,N} }{P_{N}}\right|_{x=\lambda_k^{[N]}}\\ \Vdots \\ 1 
	\end{bNiceMatrix}
	= \begin{bNiceMatrix}
		\beta_N \left.	\frac{R_{0,N}+(U_{1|0}+ U_{2|0})R_{1,N}+ U_{1|1}U_{2|0} R_{2,N}}{P_{N}}\right|_{x=\lambda_k^{[N]}} \\ \Vdots \\ 1 
	\end{bNiceMatrix}.
\end{align*}
 Hence, using the sign properties of the eigenvectors associated to an oscillatory matrix we get 
\begin{align*}
	\alpha_N 	\left.\frac{\frac{1}{L_{1|1}L_{2|0} }Q_{0,N}+\frac{L_{1|0}+ L_{2|0}}{ L_{1|1}L_{2|0} }Q_{1,N}+Q_{2,N}}{P_{N}}\right|_{x=\lambda_1^{[N]}} &>0, &
	\alpha_N 	\left.\frac{\frac{1}{L_{1|1}L_{2|0} }Q_{0,N}+\frac{L_{1|0}+ L_{2|0}}{ L_{1|1}L_{2|0} }Q_{1,N}+Q_{2,N}}{P_{N}}\right|_{x=\lambda_2^{[N]}} &<0, 
	\\
	\beta_N 	\left.\frac{\frac{1}{U_{1|1}U_{2|0} }R_{0,N}+\frac{U_{1|0}+ U_{2|0}}{ U_{1|1}U_{2|0} }R_{1,N}+R_{2,N}}{P_{N}} \right|_{x=\lambda_1^{[N]}}&>0, &
	\beta_N 	\left.\frac{\frac{1}{U_{1|1}U_{2|0}} R_{0,N}+\frac{U_{1|0}+ U_{2|0}}{ U_{1|1}U_{2|0} }R_{1,N}+R_{2,N}}{P_{N}} \right|_{x=\lambda_2^{[N]}}&<0, 
\end{align*}
and so on, alternating the sign. Recalling the sign of the characteristic polynomial $P_N$ at the eigenvalues of $T^{[N]}$ we get
\begin{align*}
	\alpha_N 	\left.\frac{\frac{1}{L_{1|1}L_{2|0} }Q_{0,N}+\frac{L_{1|0}+ L_{2|0}}{ L_{1|1}L_{2|0} }Q_{1,N}+Q_{2,N}}{P_{N}}\right|_{x=\lambda_1^{[N]}} &>0, &
	\alpha_N 	\left.\frac{\frac{1}{L_{1|1}L_{2|0} }Q_{0,N}+\frac{L_{1|0}+ L_{2|0}}{ L_{1|1}L_{2|0} }Q_{1,N}+Q_{2,N}}{P_{N}}\right|_{x=\lambda_2^{[N]}} &<0, 
	\\
	\beta_N 	\left.\frac{\frac{1}{U_{1|1}U_{2|0} }R_{0,N}+\frac{U_{1|0}+ U_{2|0}}{ U_{1|1}U_{2|0} }R_{1,N}+R_{2,N}}{P_{N}} \right|_{x=\lambda_1^{[N]}}&>0, &
	\beta_N 	\left.\frac{\frac{1}{U_{1|1}U_{2|0}} R_{0,N}+\frac{U_{1|0}+ U_{2|0}}{ U_{1|1}U_{2|0} }R_{1,N}+R_{2,N}}{P_{N}} \right|_{x=\lambda_2^{[N]}}&<0 .
\end{align*}
Hence, we obtain
\begin{align*}
	\alpha_N \left.\Big(\frac{1}{L_{1|1}L_{2|0} }Q_{0,N}+\frac{L_{1|0}+ L_{2|0}}{ L_{1|1}L_{2|0} }Q_{1,N}+Q_{2,N}\Big)\right|_{x=\lambda_k^{[N]}} &>0, \\
	\beta_N 	\left.\Big(
	\frac{1}{U_{1|1}U_{2|0} }R_{0,N}+\frac{U_{1|0}+ U_{2|0}}{ U_{1|1}U_{2|0} }R_{1,N}+R_{2,N}
	\Big)\right
	|_{x=\lambda_k^{[N]}}&>0, 
\end{align*}
for $k\in\{1,\dots,N+1\}$.

Consequently, after repeating this process up to $T^{[N,+(p-1)]}$ and $T^{[N,-(q-1)]}$ we find that
	\begin{align*}
	&\beta_N 	\Upsilon \begin{bNiceMatrix}
		R_{0,N}		\\\Vdots \\		R_{q-1,N}		\end{bNiceMatrix}, &
	&\alpha_N \begin{bNiceMatrix}
	Q_{0,N}	&\Cdots & 		Q_{p-1,N}	 
\end{bNiceMatrix}\Lambda ,
\end{align*}
are positive vectors at the points $x=\lambda^{[N]}_k$, $k\in\{1,\dots,N\}$. Therefore, if the initial condition matrices are chosen as indicated in \eqref{eq:tunnig_initial_conditions}
we get the result.
	\end{proof}

			\section{Resolvent, second kind polynomials and Weyl functions}

			From here on we assume that $N\geqslant \max(p,q)$.	
\begin{defi}
		Given $r\in\N$, we write  $\big\{e^{[r]}_1,\dots,e^{[r]}_r\big \}$ for  the canonical basis of $\R^r$ and consider the  $r\times(N+1)$ matrix
			$	E_{[r]}\coloneq\begin{bNiceMatrix}[small]
				I_r & 0_{r\times(N+1-r)}
			\end{bNiceMatrix}$.
Then,  we  introduce the vectors $e^\nu_a,e^\xi_b\in\R^{N+1}$ with
		\begin{align*}
		e^\nu_a&\coloneq E_{[p]}^\top\nu^{-\top}e^{[p]}_a, & \big(e^\xi_b\big)^\top&\coloneq \big(e^{[q]}_b\big)^\top \xi^{-1}E_{[q]}.
		\end{align*}
\end{defi}		 
%we have the modified basis
%			\begin{align*}
%				& \nu^{-\top}e_a, &a&\in\{1,\dots,p\}.
%			\end{align*}
%			Now, to get $ e_a^\nu$ we embed these vectors into $\R^N$, by adding zeros after the $p$-th entry.
%			For example,
%			$ e_1^\nu=e_1$, $e_2^{\nu}=e_2-\nu^{(1)}_{1}e_1$ and $ e_3^{\nu}= e_3-\nu^{(2)}_{2}e_2+\big(\nu^{(1)}_{1}\nu^{(2)}_{2}-\nu^{(1 )}_{2}\big)e_1$.
%	In this manner we get the set $\{e^\nu_a\}_{a=1}^p$, and we proceed similarly for $\xi$ to get $\{e^\xi_b\}_{b=1}^q\subset\R^N$.
%The following technical result will be useful.
\begin{lemma}\label{lem:truco_o_trato}
	For the matrices $\mathscr U$ and $\mathscr W$ (introduced in v) of Proposition \ref{pro:UW}) we find
	\begin{align*}
		(e_b^\xi)^\top \mathscr U&=\begin{bNiceMatrix}
				\rho^{[N]}_{1,b} &\Cdots &	\rho^{[N]}_{N+1,b}
		\end{bNiceMatrix}, &
		\mathscr W e_a^\nu&= \begin{bNiceMatrix}
			\mu^{[N]}_{1,a}\\\Vdots\\ \mu^{[N]}_{N+1,a}
		\end{bNiceMatrix}, & a&\in\{1,\dots,p\}, & b&\in\{1,\dots,q\}.
	\end{align*}
\end{lemma}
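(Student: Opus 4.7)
The plan is to reinterpret the matrix identities \eqref{eq:mu_W_e_nu} and \eqref{eq:rho_U_e_nu} (obtained in the proof of Proposition \ref{pro:boudness_Christoffel} from \eqref{eq:kcomponentelefteigenII}) in block-form involving the global matrices $\mathscr U$, $\mathscr W$ and the projectors $E_{[p]}$, $E_{[q]}$, and then to read off the stated formulas as a single column or row of these block identities.

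First, recall that $\mathscr W$ has the left eigenvectors $w^{\langle N\rangle}_k$ as rows and $\mathscr U$ the right eigenvectors $u^{\langle N\rangle}_k$ as columns; i.e.\ $(\mathscr W)_{k,n}=w^{\langle N\rangle}_{k,n}$ and $(\mathscr U)_{n,k}=u^{\langle N\rangle}_{k,n}$. Consequently, the rectangular matrix formed by the first $p$ columns of $\mathscr W$ coincides with $\mathscr W E_{[p]}^{\top}$, and the one formed by the first $q$ rows of $\mathscr U$ coincides with $E_{[q]}\mathscr U$. Using this identification, \eqref{eq:mu_W_e_nu} and \eqref{eq:rho_U_e_nu} read
\begin{align*}
\bigl[\mu^{[N]}_{k,a}\bigr]_{k,a}&=\mathscr W E_{[p]}^{\top}\nu^{-\top},&
\bigl[\rho^{[N]}_{k,b}\bigr]_{b,k}&=\xi^{-1}E_{[q]}\mathscr U.
\end{align*}

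Next, I extract the $a$-th column of the first identity and the $b$-th row of the second one. Multiplying the first identity on the right by the canonical vector $e^{[p]}_a\in\R^p$ gives
\begin{align*}
\begin{bNiceMatrix}\mu^{[N]}_{1,a}\\ \Vdots\\ \mu^{[N]}_{N+1,a}\end{bNiceMatrix}
=\mathscr W E_{[p]}^{\top}\nu^{-\top}e^{[p]}_a=\mathscr W e^{\nu}_a,
\end{align*}
where the last equality is just the definition $e^{\nu}_a\coloneq E_{[p]}^{\top}\nu^{-\top}e^{[p]}_a$. Analogously, multiplying the second identity on the left by $(e^{[q]}_b)^{\top}$ yields
\begin{align*}
\begin{bNiceMatrix}\rho^{[N]}_{1,b} &\Cdots &\rho^{[N]}_{N+1,b}\end{bNiceMatrix}
=(e^{[q]}_b)^{\top}\xi^{-1}E_{[q]}\mathscr U=(e^{\xi}_b)^{\top}\mathscr U,
\end{align*}
by the definition of $e^{\xi}_b$. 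This establishes both claimed formulas.

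There is no real obstacle beyond index bookkeeping; the only care required is to keep track of the transpose/row-vs-column conventions for $\mathscr W$ and $\mathscr U$ and of the fact that $E_{[p]}^{\top}$ (respectively $E_{[q]}$) embeds $\R^p$ as the first $p$ coordinates of $\R^{N+1}$ (respectively projects $\R^{N+1}$ onto its first $q$ coordinates), so that the products $\mathscr W E_{[p]}^{\top}$ and $E_{[q]}\mathscr U$ truly correspond to the matrices of the first $p$ eigenvector columns of $\mathscr W$ and first $q$ eigenvector rows of $\mathscr U$ appearing in \eqref{eq:mu_W_e_nu}–\eqref{eq:rho_U_e_nu}.
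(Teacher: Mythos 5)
Your argument is correct and is essentially the paper's intended one: the paper gives no separate proof, treating the lemma as an immediate consequence of \eqref{eq:kcomponentelefteigenII} (equivalently \eqref{eq:mu_W_e_nu} and \eqref{eq:rho_U_e_nu}) together with the definitions $e^\nu_a=E_{[p]}^\top\nu^{-\top}e^{[p]}_a$ and $(e^\xi_b)^\top=(e^{[q]}_b)^\top\xi^{-1}E_{[q]}$, and the matrix-form identities you derive, $\bigl[\mu^{[N]}_{k,a}\bigr]=\mathscr W E_{[p]}^{\top}\nu^{-\top}$ and $\bigl[\rho^{[N]}_{k,b}\bigr]=\xi^{-1}E_{[q]}\mathscr U$, are exactly the ones recorded in the remark following the lemma. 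Your bookkeeping of rows versus columns of $\mathscr W$ and $\mathscr U$ and of the embeddings/projections $E_{[p]}^\top$, $E_{[q]}$ is accurate (using $N\geqslant\max(p,q)$), so the extraction of the $a$-th column and $b$-th row completes the proof as intended.
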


\begin{rem}
	In matrix form, the above Lemma \ref{lem:truco_o_trato} reads
	\begin{align*}
		\begin{bNiceMatrix}
		\rho^{[N]}_{1,1} &\Cdots&\rho^{[N]}_{N+1,1} \\
		\Vdots & & \Vdots \\
		\rho^{[N]}_{1,q} &\Cdots&\rho^{[N]}_{N+1,q}
	\end{bNiceMatrix}	&=\xi^{-1} E_{[q]}\mathscr U,&	\begin{bNiceMatrix}
		\mu^{[N]}_{1,1} &\Cdots&\mu^{[N]}_{1,p} \\
		\Vdots & & \Vdots \\
		\mu^{[N]}_{N+1,1} &\Cdots&\mu^{[N]}_{N+1,p}
	\end{bNiceMatrix}&=\mathscr W E_{[p]}^\top\nu^{-\top}.
\end{align*}
Also observe that \eqref{eq:rho_mu_xi_nu} follows from these relations and two facts: $\mathscr U\mathscr W=I_N$ and $E_{[q]}E_{[p]}^\top=I_{q,p}$.
\end{rem}

For the following we need of the adjugate matrix $\operatorname{adj} A$ of a matrix $A$; i.e. of the transpose of the matrix of cofactors (c.f \cite{Horn-Johnson}).
\begin{defi}[Second kind polynomials]
	The second kind characteristic polynomials are given by
	\begin{align*}
		P^{(b,a)}_{N+1}(x)&\coloneq (e_b^\xi)^\top \operatorname{adj}(xI_{N+1}-T^{[N]})e^\nu_a, &a&\in\{1,\dots,p\}, & b&\in\{1,\dots,q\}.
	\end{align*}
\end{defi}
\begin{pro}\label{pro:second_kind_characteristic_I}
	For the second kind characteristic polynomials we find
	\begin{align*}
		P^{(b,a)}_{N+1}(x)&=\sum_{k=1}^{N+1}\rho_{k,b}^{[N]}\mu_{k,a}^{[N]}\pi^{[N]}_k(x), & \pi^{[N]}_k(x)&\coloneq\prod_{\mathclap{\substack{l\in\{1,\dots,N+1\}\\l\neq k}}}\big(x-\lambda^{[N]}_l\big).
	\end{align*}
\end{pro}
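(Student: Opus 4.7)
The plan is to exploit the spectral decomposition of $T^{[N]}$ established in Proposition \ref{pro:UW}, together with the identity relating the adjugate to the resolvent.

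First I would write $\operatorname{adj}(xI_{N+1}-T^{[N]}) = P_{N+1}(x)(xI_{N+1}-T^{[N]})^{-1}$; this is an identity of polynomial matrices valid at all $x$ that are not eigenvalues, and since both sides are polynomial in $x$ it extends to all $x$. Next, using $T^{[N]}=\mathscr U D\mathscr W$ with $\mathscr U\mathscr W=I_{N+1}$ from \eqref{eq:UW=I} and \eqref{eq:UDnW=Jn}, I would diagonalize the resolvent as
\begin{align*}
(xI_{N+1}-T^{[N]})^{-1}=\mathscr U(xI_{N+1}-D)^{-1}\mathscr W=\sum_{k=1}^{N+1}\frac{u^{\langle N\rangle}_k w^{\langle N\rangle}_k}{x-\lambda_k^{[N]}},
\end{align*}
where $u^{\langle N\rangle}_k$ is the $k$-th column of $\mathscr U$ and $w^{\langle N\rangle}_k$ is the $k$-th row of $\mathscr W$. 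Multiplying by $P_{N+1}(x)$ and using $P_{N+1}(x)/(x-\lambda_k^{[N]})=\pi^{[N]}_k(x)$ yields
\begin{align*}
\operatorname{adj}(xI_{N+1}-T^{[N]})=\sum_{k=1}^{N+1}\pi^{[N]}_k(x)\,u^{\langle N\rangle}_k w^{\langle N\rangle}_k.
\end{align*}

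Then, sandwiching between $(e_b^\xi)^\top$ on the left and $e_a^\nu$ on the right and invoking Lemma \ref{lem:truco_o_trato}, I obtain $(e_b^\xi)^\top u^{\langle N\rangle}_k=\rho^{[N]}_{k,b}$ and $w^{\langle N\rangle}_k e_a^\nu=\mu^{[N]}_{k,a}$, so that
\begin{align*}
P^{(b,a)}_{N+1}(x)=\sum_{k=1}^{N+1}\pi^{[N]}_k(x)\bigl[(e_b^\xi)^\top u^{\langle N\rangle}_k\bigr]\bigl[w^{\langle N\rangle}_k e_a^\nu\bigr]=\sum_{k=1}^{N+1}\rho^{[N]}_{k,b}\mu^{[N]}_{k,a}\pi^{[N]}_k(x),
\end{align*}
which is the claim.

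There is no real obstacle here; the only subtlety is making sure the spectral expansion of the resolvent is available, which requires that $P_{N+1}$ have simple zeros. This hypothesis is implicit (it was used already to define $\mathscr U$ and $\mathscr W$ in Proposition \ref{pro:UW}) and, under the standing assumption that $T$ admits a PBF, is guaranteed by Theorem \ref{teo:eigennvalues_TN} since the relevant leading principal submatrices are oscillatory.
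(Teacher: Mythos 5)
Your proof is correct and follows essentially the same route as the paper's: both reduce $\operatorname{adj}\big(xI_{N+1}-T^{[N]}\big)$ to $\mathscr U\operatorname{adj}\big(xI_{N+1}-D\big)\mathscr W$ via the spectral decomposition from Proposition \ref{pro:UW} and then read off the entries with Lemma \ref{lem:truco_o_trato}. The only cosmetic difference is that you pass through the resolvent identity $\operatorname{adj}\big(xI_{N+1}-T^{[N]}\big)=P_{N+1}(x)\big(xI_{N+1}-T^{[N]}\big)^{-1}$ together with polynomial continuation, whereas the paper invokes the multiplicativity of the adjugate directly.
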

\begin{proof}
	It follows from $\operatorname{adj} (xI_{N+1}-T^{[N]})=\operatorname{adj} (\mathscr U(xI_{N+1}-D)\mathscr W)= \mathscr U\operatorname{adj}(xI_{N+1}-D)\mathscr W$.
\end{proof}

\begin{pro}
	The second kind characteristic polynomials are the second kind polynomials of the characteristic polynomial; i.e.,
	\begin{align*}
		P^{(b,a)}_{N+1}(z)&=
		\int \frac{P_{N+1}(z)-P_{N+1}(x)}{z-x}\d\psi_{b,a}^{[N]}(x)\\&=\alpha_{N+1}\int \frac{\det (A_{N+1}(z))-\det (A_{N+1}(x))}{z-x}\d\psi_{b,a}^{[N]}(x)\\&=\beta_{N+1}\int \frac{\det (B_{N+1}(z))-\det( B_{N+1}(x))}{z-x}\d\psi_{b,a}^{[N]}(x).
	\end{align*}
\end{pro}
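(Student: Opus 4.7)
The plan is to use Proposition \ref{pro:second_kind_characteristic_I}, which already gives the expansion
\[
P^{(b,a)}_{N+1}(z)=\sum_{k=1}^{N+1}\rho_{k,b}^{[N]}\mu_{k,a}^{[N]}\,\pi^{[N]}_k(z),
\qquad \pi^{[N]}_k(z)=\prod_{l\neq k}\bigl(z-\lambda^{[N]}_l\bigr),
\]
and to recognise this sum as the Stieltjes-type integral on the right-hand side by exploiting the discrete nature of the measure $\d\psi_{b,a}^{[N]}$ defined in \eqref{eq:discrete_mesures}.

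First, since $P_{N+1}(z)=\prod_{l=1}^{N+1}(z-\lambda^{[N]}_l)$ and the zeros $\lambda^{[N]}_k$ are simple, we have the identity
\[
\pi^{[N]}_k(z)=\frac{P_{N+1}(z)}{z-\lambda^{[N]}_k}=\frac{P_{N+1}(z)-P_{N+1}\bigl(\lambda^{[N]}_k\bigr)}{z-\lambda^{[N]}_k},
\]
using $P_{N+1}\bigl(\lambda^{[N]}_k\bigr)=0$. Plugging this into the expansion of $P^{(b,a)}_{N+1}(z)$ from Proposition~\ref{pro:second_kind_characteristic_I} and recognising the right-hand side as integration against the discrete measure $\d\psi^{[N]}_{b,a}=\sum_k \rho_{k,b}^{[N]}\mu_{k,a}^{[N]}\,\delta\bigl(x-\lambda^{[N]}_k\bigr)$ yields
\[
P^{(b,a)}_{N+1}(z)=\sum_{k=1}^{N+1}\rho_{k,b}^{[N]}\mu_{k,a}^{[N]}\,\frac{P_{N+1}(z)-P_{N+1}(\lambda^{[N]}_k)}{z-\lambda^{[N]}_k}
=\int\frac{P_{N+1}(z)-P_{N+1}(x)}{z-x}\,\d\psi_{b,a}^{[N]}(x),
\]
which is the first of the three claimed expressions.

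The two remaining identities are then immediate: by Theorem~\ref{teo:determinants_mixed} one has $P_{N+1}(x)=\alpha_{N+1}\det A_{N+1}(x)=\beta_{N+1}\det B_{N+1}(x)$, so substituting into the first integral and pulling the (constant in $x$) prefactors $\alpha_{N+1}$ or $\beta_{N+1}$ outside the integral gives the two expressions involving $\det A_{N+1}$ and $\det B_{N+1}$, respectively. There is no real obstacle here; the only subtlety worth checking is the vanishing of $P_{N+1}$ at the support points, which is precisely what allows the kernel $\pi^{[N]}_k(z)$ to be rewritten as a divided difference and thereby absorbed into the integral. Everything else is a direct substitution.
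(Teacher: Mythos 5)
Your proof is correct and follows essentially the same route as the paper: both rely on Proposition \ref{pro:second_kind_characteristic_I}, the vanishing of $P_{N+1}$ at the nodes $\lambda^{[N]}_k$ to turn $\pi^{[N]}_k(z)$ into the divided difference, the discrete measure \eqref{eq:discrete_mesures}, and Theorem \ref{teo:determinants_mixed} for the last two identities. The only difference is cosmetic — you go from the polynomial expansion to the integral, while the paper evaluates the integral and identifies it with the expansion.
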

\begin{proof}
We have
\begin{align*}
	\int \frac{P_{N+1}(z)-P_{N+1}(x)}{z-x}\d\psi_{b,a}^{[N]}(x)=\sum_{k=1}^{N+1}\rho_{k,b}^{[N]}\mu_{k,a}^{[N]}\int\delta\big(x-\lambda_k^{[N]}\big)\frac{P_{N+1}(z)-P_{N+1}(x)}{z-x},
\end{align*}
but
\begin{align*}
	\int\delta\big(x-\lambda_k^{[N]}\big)\frac{P_{N+1}(z)-P_{N+1}(x)}{z-x}=
	\frac{P_{N+1}(z)-P_{N+1}\big(\lambda_k^{[N]}\big)}{z-\lambda_k^{[N]}}
			=\frac{P_{N+1}(z)}{z-\lambda_k^{[N]}}=\pi^{[N]}_k(z),
\end{align*}
and using Proposition \ref{pro:second_kind_characteristic_I} we obtain the first result. For the second we use Theorem \ref{teo:determinants_mixed}.
\end{proof}
\
\begin{rem}
The second kind characteristic polynomial matrix is
	\begin{align*}
		P^{(1)}_{N+1}&\coloneq \xi^{-1} E_{[q]}\operatorname{adj}(xI_{N+1}-T^{[N]}) E_{[p]}^\top\nu^{-\top}=\sum_{k=1}^{N+1}\pi^{[N]}_k(x)	\begin{bNiceMatrix}
			\rho^{[N]}_{k,1} \\\Vdots \\	\rho^{[N]}_{k,q}
		\end{bNiceMatrix}\begin{bNiceMatrix}
			\mu^{[N]}_{k,1} &\Cdots & 	\mu^{[N]}_{k,p}
		\end{bNiceMatrix}\\&=\int \frac{P_{N+1}(z)-P_{N+1}(x)}{z-x}\d\Psi^{[N]}(x),
	\end{align*}
is a $q\times p$ matrix of polynomials whose entries are the polynomials of the second kind: $(P^{(1)}_{N+1})_{b,a}=	P^{(b,a)}_{N+1}$.
\end{rem}
\begin{pro}
	If $T$ has a PBF and \eqref{eq:tunnig_initial_conditions} is satisfied then $\deg P_{N+1}^{(b,a)}=N$.
\end{pro}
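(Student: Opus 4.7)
The plan is to read off the leading coefficient of $P^{(b,a)}_{N+1}$ from the explicit spectral formula established in Proposition~\ref{pro:second_kind_characteristic_I} and then to show that, under the PBF hypothesis together with the tuning~\eqref{eq:tunnig_initial_conditions}, this leading coefficient is strictly positive (in particular nonzero), whence the degree is exactly~$N$.

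First, by Proposition~\ref{pro:second_kind_characteristic_I} we have
\begin{align*}
P^{(b,a)}_{N+1}(x)=\sum_{k=1}^{N+1}\rho_{k,b}^{[N]}\mu_{k,a}^{[N]}\pi^{[N]}_k(x),
\end{align*}
where each polynomial $\pi_k^{[N]}(x)=\prod_{l\neq k}\big(x-\lambda_l^{[N]}\big)$ is monic of degree $N$. Consequently the coefficient of $x^N$ in $P^{(b,a)}_{N+1}$ equals
\begin{align*}
\sum_{k=1}^{N+1}\rho_{k,b}^{[N]}\mu_{k,a}^{[N]},
\end{align*}
and in view of Proposition~\ref{pro:boudness_Christoffel} it can also be identified with the $(b,a)$ entry of $\xi^{-1}I_{q,p}\nu^{-\top}$. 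Lower order monomials in $P^{(b,a)}_{N+1}$ are harmless: we only need to certify that the coefficient above does not vanish.

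Now I invoke Theorem~\ref{teo:positivity}. Since $T$ admits a PBF and the initial condition matrices $\nu,\xi$ have been chosen according to~\eqref{eq:tunnig_initial_conditions}, all Christoffel numbers $\rho_{k,b}^{[N]}$ and $\mu_{k,a}^{[N]}$ are strictly positive for $k\in\{1,\dots,N+1\}$, $b\in\{1,\dots,q\}$, $a\in\{1,\dots,p\}$. Therefore each term $\rho_{k,b}^{[N]}\mu_{k,a}^{[N]}$ in the above sum is strictly positive, hence
\begin{align*}
\sum_{k=1}^{N+1}\rho_{k,b}^{[N]}\mu_{k,a}^{[N]}>0,
\end{align*}
and in particular is nonzero. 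Thus the coefficient of $x^N$ in $P^{(b,a)}_{N+1}$ is nonzero, which gives $\deg P^{(b,a)}_{N+1}=N$, as claimed.

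The only potentially delicate point is the positivity of all the Christoffel numbers, but that has already been established in Theorem~\ref{teo:positivity}; no further argument is required beyond pointing to it. The rest of the proof is simply reading the leading term of the spectral expansion of $P^{(b,a)}_{N+1}$.
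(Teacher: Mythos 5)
Your proof is correct and follows essentially the same route as the paper: the paper's (terse) argument also rests on the positivity of the Christoffel numbers under the PBF and the tuning \eqref{eq:tunnig_initial_conditions}, combined with the adjugate/spectral representation of $P^{(b,a)}_{N+1}$, which is exactly the expansion $\sum_{k}\rho^{[N]}_{k,b}\mu^{[N]}_{k,a}\pi^{[N]}_k$ of Proposition \ref{pro:second_kind_characteristic_I} that you use. You merely make explicit the leading-coefficient computation (coefficient of $x^N$ equal to $\sum_k\rho^{[N]}_{k,b}\mu^{[N]}_{k,a}>0$) that the paper leaves implicit, which is a faithful completion of the same idea.
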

\begin{proof}
	The choice \eqref{eq:tunnig_initial_conditions} ensures that the entries of the vectors $e^\nu_a$ and $e^\xi_b$ are positive. The PBF of $T$ also ensures that all the Christoffel numbers are positive. Then, the definition of the second kind polynomials through the adjugate matrix leads to the degree $N$ of these polynomials.
\end{proof}

The moments of the $pq$ discrete measures $\d\psi_{b,a}^{[N]}$ are linked to the components of the powers of $T^{[N]}$:
\begin{pro}[Discrete moments]\label{pro:eq:moments_discrete}
	For the discrete moments we have
	\begin{align}\label{eq:moments_discrete}
	\int x^n\d\psi^{[N]} _{b,a}(x) &=\sum_{k=1}^{N+1}\rho_{k,b}^{[N]}\mu_{k,a}^{[N]}\big( \lambda_k^{[N]}\big)^n
		=(e_b^\xi)^\top \big(T^{[N]}\big)^ne^\nu_a, & a&\in\{1,\dots, p\},& b&\in\{1,\dots,q\}.
	\end{align}
\end{pro}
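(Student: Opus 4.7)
My plan is to verify the two equalities in turn, both of which are short given the machinery already in place.

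For the first equality, I would simply substitute the definition of the matrix of discrete measures given in \eqref{eq:discrete_mesures}. Reading off the $(b,a)$ entry of the outer product expansion yields
\begin{align*}
\d\psi^{[N]}_{b,a}(x)=\sum_{k=1}^{N+1}\rho^{[N]}_{k,b}\mu^{[N]}_{k,a}\,\delta\bigl(x-\lambda^{[N]}_k\bigr),
\end{align*}
and integrating $x^{n}$ against this atomic measure immediately gives $\sum_{k=1}^{N+1}\rho^{[N]}_{k,b}\mu^{[N]}_{k,a}\bigl(\lambda^{[N]}_k\bigr)^{n}$.

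For the second equality, I would use the spectral identity \eqref{eq:UDnW=Jn} from Proposition~\ref{pro:UW}, namely $\bigl(T^{[N]}\bigr)^{n}=\mathscr U D^{n}\mathscr W$, together with Lemma~\ref{lem:truco_o_trato}. Inserting the factorization gives
\begin{align*}
(e_b^\xi)^\top\bigl(T^{[N]}\bigr)^{n}e_a^\nu=\bigl((e_b^\xi)^\top\mathscr U\bigr)\,D^{n}\,\bigl(\mathscr W e_a^\nu\bigr),
\end{align*}
and the lemma identifies the left factor as the row $\begin{bNiceMatrix}\rho^{[N]}_{1,b}&\Cdots&\rho^{[N]}_{N+1,b}\end{bNiceMatrix}$ and the right factor as the column with entries $\mu^{[N]}_{k,a}$. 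Since $D=\diag\bigl(\lambda^{[N]}_1,\dots,\lambda^{[N]}_{N+1}\bigr)$, the scalar product collapses to $\sum_{k=1}^{N+1}\rho^{[N]}_{k,b}\bigl(\lambda^{[N]}_k\bigr)^{n}\mu^{[N]}_{k,a}$, matching the sum computed in the first step.

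There is no serious obstacle here: the result is essentially the assembly of two previously established facts, the spectral decomposition of $T^{[N]}$ and the identification of the Christoffel numbers as entries of $(e_b^\xi)^\top\mathscr U$ and $\mathscr W e_a^\nu$. The only place one should be careful is keeping track of the ordering of factors (row/column) in the biorthogonal diagonalization, but the diagonal character of $D^{n}$ makes commutation with the surrounding vectors irrelevant. The statement can therefore be presented as a direct corollary of Lemma~\ref{lem:truco_o_trato} and Proposition~\ref{pro:UW}(vi).
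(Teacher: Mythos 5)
Your argument is correct and coincides with the paper's own proof: the first equality is read off directly from the definition of the discrete measures \eqref{eq:discrete_mesures}, and the second uses $\big(T^{[N]}\big)^n=\mathscr U D^n\mathscr W$ from \eqref{eq:UDnW=Jn} together with Lemma \ref{lem:truco_o_trato} to identify $(e_b^\xi)^\top\mathscr U$ and $\mathscr W e_a^\nu$ with the rows and columns of Christoffel numbers. Nothing further is needed.
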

	\begin{proof}
		We have that $	(e_b^{\xi})^\top \big(T^{[N]}\big)^ne^\nu_a=	(e_b^\xi)^\top \mathscr U D^n \mathscr We^\nu_a$ so that%, and \eqref{eq:truco_o_trato} leads to
		\begin{align*}
			(e_b^{\xi})^\top\big(T^{[N]}\big)^ne^\nu_a=\begin{bNiceMatrix}
				\rho^{[N]}_{1,b} &\Cdots &	\rho^{[N]}_{N+1,b}
			\end{bNiceMatrix}D^n
			\begin{bNiceMatrix}
				\mu^{[N]}_{1,a}\\\Vdots\\ \mu^{[N]}_{N+1,a}
			\end{bNiceMatrix},
		\end{align*}
		and the result follows.
	\end{proof}
	
	\begin{rem}
		In matrix form we can write 
		\begin{align*}
			\int x^n\d\Psi^{[N]}(x)=\xi^{-1} E_{[q]}\big(T^{[N]}\big)^nE_{[p]}^\top\nu^{-\top}.
		\end{align*}
		\end{rem}
\begin{defi}[Resolvent]
	The resolvent matrix $ R^{[N]}(z)$ of the leading principal submatrix $ T^{[N]}$ is
	\begin{align*}
		R^{[N]}(z)\coloneq \big(z I_{N+1}- T^{[N]}\big)^{-1}=\frac{\operatorname{adj}\big(z I_{N+1}- T^{[N]}\big)}{\det(z I_{N+1}-T^{[N]})}.
	\end{align*}
\end{defi}
	\begin{lemma}We have	
	\begin{align}\label{eq:spectral_resolvent}
		R^{[N]}(z)
		& =\mathscr U(zI_{N+1}-D)^{-1}\mathscr W. \end{align}
	\end{lemma}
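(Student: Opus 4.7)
The plan is to diagonalize $T^{[N]}$ using the biorthogonal eigenvector matrices $\mathscr U$ and $\mathscr W$ and then simply invert the shifted matrix. The key input is Proposition \ref{pro:UW}, which gives both the completeness relation $\mathscr U\mathscr W = \mathscr W\mathscr U = I_{N+1}$ in \eqref{eq:UW=I} and the spectral representation of the powers $\mathscr U D^n\mathscr W = \big(T^{[N]}\big)^n$ in \eqref{eq:UDnW=Jn}.

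First, I would take $n=1$ in \eqref{eq:UDnW=Jn} to obtain the diagonalization $T^{[N]} = \mathscr U D \mathscr W$. Combined with \eqref{eq:UW=I}, which lets me write $I_{N+1} = \mathscr U\mathscr W$, we get
\begin{align*}
zI_{N+1} - T^{[N]} = \mathscr U (zI_{N+1})\mathscr W - \mathscr U D \mathscr W = \mathscr U (zI_{N+1} - D)\mathscr W.
\end{align*}
Since $D = \diag\big(\lambda_1^{[N]},\dots,\lambda_{N+1}^{[N]}\big)$ and the eigenvalues are simple (as we have assumed throughout, which is guaranteed when $T$ has a PBF by Theorem \ref{teo:eigennvalues_TN}), the diagonal matrix $zI_{N+1}-D$ is invertible precisely when $z\notin\big\{\lambda_1^{[N]},\dots,\lambda_{N+1}^{[N]}\big\}$, which is the same condition as invertibility of $zI_{N+1}-T^{[N]}$.

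Then I would invert the above factorization, using $\mathscr U^{-1} = \mathscr W$ and $\mathscr W^{-1} = \mathscr U$ from \eqref{eq:UW=I}, to deduce
\begin{align*}
R^{[N]}(z) = \big(zI_{N+1}-T^{[N]}\big)^{-1} = \mathscr W^{-1}(zI_{N+1}-D)^{-1}\mathscr U^{-1} = \mathscr U (zI_{N+1}-D)^{-1}\mathscr W,
\end{align*}
which is exactly \eqref{eq:spectral_resolvent}. There is no real obstacle here: the content of the lemma is just the standard spectral formula for the resolvent of a diagonalizable matrix, and all nontrivial work has been done beforehand in establishing the biorthogonality and completeness of the left/right eigenvectors $w_k^{\langle N\rangle}$ and $u_k^{\langle N\rangle}$ assembled into $\mathscr W$ and $\mathscr U$.
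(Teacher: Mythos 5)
Your proof is correct and is essentially the paper's own argument: the paper simply invokes the spectral decomposition of $T^{[N]}$, and you have made that explicit by writing $zI_{N+1}-T^{[N]}=\mathscr U(zI_{N+1}-D)\mathscr W$ via \eqref{eq:UW=I} and \eqref{eq:UDnW=Jn} and then inverting. Nothing is missing.
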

\begin{proof}
	It follows immediately from the spectral decomposition of the matrix $ T^{[N]}$.
%, we obtain 
\end{proof}
	\begin{defi}[Weyl's functions]
		The Weyl functions are
		\begin{align*}
%			S^{[N]}&=\begin{bNiceMatrix}
%				S^{[N]}_{1,1} & \Cdots & 	S^{[N]}_{1,p}\\
%				\Vdots & &\Vdots\\
%				S^{[N]}_{q,1} & \Cdots & 	S^{[N]}_{q,p}
%			\end{bNiceMatrix}, &
			S_{b,a}^{[N]}&\coloneq (e_b^\xi)^\top R^{[N]}e^\nu_a, & a&\in\{1,\dots, p\}, & b&\in\{1,\dots,q\}.
		\end{align*}
	\end{defi}
	
	\begin{pro}\label{pro:Weyl_functions}
		The Weyl functions can be expressed as follows
		\begin{align*}
			S_{b,a}^{[N]}(z)&= \frac{P^{(b,a)}_{N+1}(z)}{P_{N+1}(z)}=\sum_{k=1}^{N+1}\frac{\rho^{[N]}_{k,b}\mu^{[N]}_{k,a}}{z-\lambda^{[N]}_k}=
			\int\frac{\d\psi^{[N]}_{b,a}(x)}{z-x}
			, & a&\in\{1,\dots, p\}, & b&\in\{1,\dots,q\}.
		\end{align*}
	\end{pro}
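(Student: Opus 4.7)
The proof is essentially a direct assembly of earlier identities; no step involves new ideas, only choosing the right decomposition of the resolvent for each of the three claimed equalities.

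The plan is to prove the three equalities in order. For the first, $S_{b,a}^{[N]}=P^{(b,a)}_{N+1}/P_{N+1}$, I would use the cofactor formula
\begin{align*}
R^{[N]}(z)=\frac{\operatorname{adj}(zI_{N+1}-T^{[N]})}{\det(zI_{N+1}-T^{[N]})}
\end{align*}
given in the definition of the resolvent, and sandwich it between $(e_b^\xi)^\top$ on the left and $e_a^\nu$ on the right. The denominator is by definition $P_{N+1}(z)$, and the numerator is exactly $(e_b^\xi)^\top\operatorname{adj}(zI_{N+1}-T^{[N]})e_a^\nu=P_{N+1}^{(b,a)}(z)$ by the definition of the second kind characteristic polynomials.

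For the second equality, I would instead invoke the spectral form of the resolvent, \eqref{eq:spectral_resolvent}, to write
\begin{align*}
S_{b,a}^{[N]}(z)=(e_b^\xi)^\top\mathscr U(zI_{N+1}-D)^{-1}\mathscr W e_a^\nu.
\end{align*}
Since $(zI_{N+1}-D)^{-1}=\operatorname{diag}\bigl((z-\lambda_1^{[N]})^{-1},\dots,(z-\lambda_{N+1}^{[N]})^{-1}\bigr)$, applying Lemma~\ref{lem:truco_o_trato} to identify $(e_b^\xi)^\top\mathscr U$ with the row of Christoffel numbers $\bigl[\rho^{[N]}_{1,b},\dots,\rho^{[N]}_{N+1,b}\bigr]$ and $\mathscr W e_a^\nu$ with the column $\bigl[\mu^{[N]}_{1,a},\dots,\mu^{[N]}_{N+1,a}\bigr]^\top$ collapses the product to the partial-fraction expansion $\sum_{k=1}^{N+1}\rho_{k,b}^{[N]}\mu_{k,a}^{[N]}/(z-\lambda_k^{[N]})$.

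For the third equality, I would substitute the entrywise form of the matrix of discrete measures \eqref{eq:discrete_mesures}, namely $\d\psi_{b,a}^{[N]}(x)=\sum_{k=1}^{N+1}\rho_{k,b}^{[N]}\mu_{k,a}^{[N]}\,\delta\bigl(x-\lambda_k^{[N]}\bigr)$, into the Cauchy integral $\int\d\psi_{b,a}^{[N]}(x)/(z-x)$; the delta evaluation immediately produces the same partial-fraction sum obtained in the second step, closing the chain of equalities.

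Since each link is an unambiguous substitution of a previously stated identity, there is no genuine obstacle. The only mild bookkeeping is making sure the pairing of $\xi$ and $\nu$ through $e_b^\xi$, $e_a^\nu$ matches the ordering used in Lemma~\ref{lem:truco_o_trato} (which it does, by construction), so that no transposes or conjugations get mishandled when one passes between the adjugate form, the spectral form, and the measure form of the resolvent.
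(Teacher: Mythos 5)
Your proposal is correct and follows essentially the same route as the paper: the first equality via the adjugate/determinant form of the resolvent together with the definitions of $P_{N+1}$ and $P^{(b,a)}_{N+1}$, the second via the spectral form \eqref{eq:spectral_resolvent} combined with Lemma \ref{lem:truco_o_trato}, and the third by inserting the discrete measures \eqref{eq:discrete_mesures} into the Cauchy integral. The only difference is that you spell out the third step explicitly, which the paper leaves implicit.
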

	\begin{proof}
		The first equalities follow from adjugate expressions.
		The second expressions can be deduced from \eqref{eq:spectral_resolvent}. Indeed, recalling Lemma \ref{lem:truco_o_trato} we get that
		the Weyl functions are
		\begin{align*}
			S_{b,a}^{[N]}(z)&= \begin{bNiceMatrix}
				\rho^{[N]}_{1,b}&\Cdots& \rho^{[N]}_{N+1,b}
			\end{bNiceMatrix}(zI_{N+1}-D)^{-1}
			\begin{bNiceMatrix}
				\mu^{[N]}_{1,a}\\\Vdots\\ \mu^{[N]}_{N+1,a}
			\end{bNiceMatrix}=\sum_{k=1}^{N+1}\frac{\rho^{[N]}_{k,b}\mu^{[N]}_{k,a}}{z-\lambda^{[N]}_{k}}.
		\end{align*}
	\end{proof}

\begin{rem}
	For the $q\times p$ matrix of Weyl functions
$	S^{[N]}=\begin{bNiceMatrix}[small]
			S^{[N]}_{1,1} &\Cdots& 	S^{[N]}_{1,p} \\
			\Vdots & &\Vdots\\
			S^{[N]}_{q,1} &\Cdots& 	S^{[N]}_{q,p} 
		\end{bNiceMatrix}\coloneq \xi^{-1}E_{[q]}R_z^{[N]}E^\top_{[p]}\nu^{-1}$,
we can write
	\begin{align*}
		S^{[N]}(z)&= \frac{P^{(1)}_{N+1}(z)}{P_{N+1}(z)}=\sum_{k=1}^{N+1}\frac{1}{z-\lambda^{[N]}_k}	\begin{bNiceMatrix}
			\rho^{[N]}_{k,1} \\\Vdots \\	\rho^{[N]}_{k,q}
		\end{bNiceMatrix}\begin{bNiceMatrix}
			\mu^{[N]}_{k,1} &\Cdots & 	\mu^{[N]}_{k,p}
		\end{bNiceMatrix}		
		=\int\frac{	\d\Psi^{[N]}(x)}{z-x}.
	\end{align*}

\end{rem}

\begin{pro}
	If $T$ has a PBF and \eqref{eq:tunnig_initial_conditions} is satisfied then $P_{N+1}^{(b,a)}$ is interlaced by $P_{N+1}$.
\end{pro}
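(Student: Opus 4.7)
The plan is to exploit the partial fraction decomposition of the Weyl function established in Proposition \ref{pro:Weyl_functions} together with the strict positivity of the Christoffel numbers furnished by Theorem \ref{teo:positivity}. Under the PBF hypothesis and the choice of initial conditions \eqref{eq:tunnig_initial_conditions}, all the coefficients $\rho^{[N]}_{k,b}\mu^{[N]}_{k,a}$ are positive. Moreover, since $T^{[N]}$ is oscillatory, Theorem \ref{teo:eigennvalues_TN} tells us that the $N+1$ eigenvalues $\lambda^{[N]}_1>\lambda^{[N]}_2>\cdots>\lambda^{[N]}_{N+1}$ are simple and positive; these are precisely the zeros of $P_{N+1}$.

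First, I would write, for $x\in\R\setminus\{\lambda^{[N]}_k\}_{k=1}^{N+1}$,
\begin{align*}
\frac{P^{(b,a)}_{N+1}(x)}{P_{N+1}(x)}=\sum_{k=1}^{N+1}\frac{\rho^{[N]}_{k,b}\mu^{[N]}_{k,a}}{x-\lambda^{[N]}_k},
\end{align*}
and differentiate to obtain
\begin{align*}
\frac{\d}{\d x}\,S^{[N]}_{b,a}(x)=-\sum_{k=1}^{N+1}\frac{\rho^{[N]}_{k,b}\mu^{[N]}_{k,a}}{(x-\lambda^{[N]}_k)^2}<0
\end{align*}
on every interval $(\lambda^{[N]}_{k+1},\lambda^{[N]}_{k})$, so $S^{[N]}_{b,a}$ is strictly decreasing between consecutive poles.

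Next, I analyze the boundary behaviour at the poles. Since $\rho^{[N]}_{k,b}\mu^{[N]}_{k,a}>0$, we have $S^{[N]}_{b,a}(x)\to+\infty$ as $x\to (\lambda^{[N]}_k)^+$ and $S^{[N]}_{b,a}(x)\to-\infty$ as $x\to (\lambda^{[N]}_k)^-$. Consequently, on each of the $N$ bounded intervals $(\lambda^{[N]}_{k+1},\lambda^{[N]}_{k})$, $k\in\{1,\dots,N\}$, the function $S^{[N]}_{b,a}$ decreases continuously from $+\infty$ to $-\infty$ and therefore has exactly one zero. These zeros are zeros of $P^{(b,a)}_{N+1}$, which, by the preceding proposition, has degree $N$; thus we have located \emph{all} its zeros and they strictly interlace the zeros of $P_{N+1}$, as desired.

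The only subtlety is to justify that there are no further zeros of $P^{(b,a)}_{N+1}$ outside $(\lambda^{[N]}_{N+1},\lambda^{[N]}_{1})$ and that none of the poles are cancelled by zeros of the numerator: both follow at once from the degree count ($\deg P^{(b,a)}_{N+1}=N$ and we have already produced $N$ distinct zeros), which is why the positivity of all the Christoffel products $\rho^{[N]}_{k,b}\mu^{[N]}_{k,a}$, secured by Theorem \ref{teo:positivity} under the hypothesis \eqref{eq:tunnig_initial_conditions}, is essential.
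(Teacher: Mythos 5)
Your proposal is correct and follows essentially the same route as the paper: the paper's proof simply observes that under the PBF hypothesis the Weyl functions $S^{[N]}_{b,a}=P^{(b,a)}_{N+1}/P_{N+1}$ have only simple poles with positive residues and concludes interlacing, which is exactly the partial-fraction/monotonicity argument you spell out in detail (including the degree-$N$ count from the preceding proposition to confirm all zeros are located).
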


\begin{proof}
	Notice that if $T$ has a PBF all the singularities of the Weyl functions are simple poles with positive residues. Consequently, each of the second kind polynomials $P^{(b,a)}_{N+1}$ is interlaced by the characteristic polynomial $P_{N+1}$.
\end{proof}

We now connect these constructions with the polynomials used in discussion of the Hermite--Padé problem in Proposition \ref{pro:matrix_Hermite_Padé}.
As usual, $\{e_n\}_{n=1}^{N+1}\subset R^{N+1}$ is the canonical basis.
\begin{pro}[Vectorial polynomials of the second type]
	For $n\in\{1,\dots,N+1\}$ we find
	\begin{align*} 
\int \frac{\d\Psi^{[N]}(x)	}{z-x}\begin{bNiceMatrix}
	A^{(1)}_{n-1}(x)\\\Vdots \\A^{(p)}_{n-1}(x)
\end{bNiceMatrix}&=\xi^{-1}E_{[q]}R^{[N]}(z) e_n, \\
\int \begin{bNiceMatrix}
	B^{(1)}_{n-1}(x)& \Cdots &B^{(q)}_{n-1}(x)
\end{bNiceMatrix}\frac{\d\Psi^{[N]}(x)	}{z-x}&=e_n^\top R^{[N]}(z) E_{[p]}^\top\nu^{-\top}, 
\end{align*}	
and entrywise
	\begin{align*} 
\sum_{a=1}^p	\int \frac{\d\psi_{b,a}^{[N]}(x)	}{z-x}
	A^{(a)}_{n-1}(x)&=\big(e^\xi_b\big)^\top R^{[N]}(z) e_n, &
\sum_{b=1}^q	\int 
		B^{(b)}_{n-1}(x)\frac{\d\psi_{b,a}^{[N]}(x)	}{z-x}&=e_n^\top R^{[N]}(z) e_a^\nu.
\end{align*}	
\end{pro}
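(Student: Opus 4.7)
The plan is to reduce both identities to the spectral decomposition of the resolvent together with the expressions for the biorthogonal eigenvectors and the discrete measure. I will work out the first entrywise formula in detail; the second follows by symmetric manipulations, and the vector forms are obtained by stacking entries.

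First I would insert the spectral decomposition $R^{[N]}(z)=\mathscr U(zI_{N+1}-D)^{-1}\mathscr W$ given in \eqref{eq:spectral_resolvent} into the right-hand side $\big(e^\xi_b\big)^\top R^{[N]}(z)e_n$. Applying Lemma \ref{lem:truco_o_trato} to the left factor gives
\begin{align*}
\big(e^\xi_b\big)^\top\mathscr U=\begin{bNiceMatrix}
\rho^{[N]}_{1,b}&\Cdots&\rho^{[N]}_{N+1,b}
\end{bNiceMatrix},
\end{align*}
while $\mathscr W e_n$ picks out the $n$-th column of $\mathscr W$, i.e.\ the column vector with entries $w^{\langle N\rangle}_{k,n}$, $k=1,\dots,N+1$.

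Second, I would substitute the linear-form representation \eqref{eq:discrete_linear_form_typeI}, $w^{\langle N\rangle}_{k,n}=\sum_{a=1}^{p}A^{(a)}_{n-1}\big(\lambda^{[N]}_k\big)\mu^{[N]}_{k,a}$, to obtain
\begin{align*}
\big(e^\xi_b\big)^\top R^{[N]}(z)e_n
=\sum_{k=1}^{N+1}\frac{\rho^{[N]}_{k,b}}{z-\lambda^{[N]}_k}\sum_{a=1}^{p}\mu^{[N]}_{k,a}A^{(a)}_{n-1}\big(\lambda^{[N]}_k\big).
\end{align*}
Interchanging the two summations and recognising the definition \eqref{eq:discrete_mesures} of the discrete measures
$\d\psi^{[N]}_{b,a}=\sum_{k=1}^{N+1}\rho^{[N]}_{k,b}\mu^{[N]}_{k,a}\,\delta(x-\lambda^{[N]}_k)$
yields
\begin{align*}
\big(e^\xi_b\big)^\top R^{[N]}(z)e_n=\sum_{a=1}^{p}\int\frac{\d\psi^{[N]}_{b,a}(x)}{z-x}A^{(a)}_{n-1}(x),
\end{align*}
which is the first entrywise identity. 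The second entrywise identity is obtained in the same way, now using $\mathscr Ue_n$ to produce $u^{\langle N\rangle}_{k,n}$ and applying \eqref{eq:discrete_linear_form_typeII} together with $(e^{[p]}_a)^\top\nu^{-1}E_{[p]}^\top$ acting from the right (here one uses the companion statement of Lemma \ref{lem:truco_o_trato} giving $\mathscr W e^\nu_a=\begin{bNiceMatrix}\mu^{[N]}_{1,a}&\Cdots&\mu^{[N]}_{N+1,a}\end{bNiceMatrix}^\top$).

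The vector statements are then a mere assembly: stacking the entrywise identity in $b\in\{1,\dots,q\}$ packages the $\rho$-vectors into $\xi^{-1}E_{[q]}$ via the matrix form of Lemma \ref{lem:truco_o_trato}, producing the first displayed identity; stacking in $a\in\{1,\dots,p\}$ does the analogous job with $E_{[p]}^\top\nu^{-\top}$ for the second. There is no real obstacle here — the only point requiring care is to correctly recognise that $\mathscr We_n$ and $\mathscr Ue_n$ yield columns that match the $A$'s and $B$'s via the linear-form identities \eqref{eq:discrete_linear_form_typeI}--\eqref{eq:discrete_linear_form_typeII}, and to keep track of the $\xi^{-1}E_{[q]}$ and $E_{[p]}^\top\nu^{-\top}$ factors consistently with Lemma \ref{lem:truco_o_trato}.
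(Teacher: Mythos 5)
Your proposal is correct and follows essentially the same route as the paper's proof: the paper reads the same chain of equalities in the opposite direction (starting from the integral, using \eqref{eq:discrete_linear_form_typeI}--\eqref{eq:discrete_linear_form_typeII} and \eqref{eq:kcomponentelefteigenII}, then recognizing the spectral decomposition of the resolvent), whereas you start from $\big(e^\xi_b\big)^\top R^{[N]}(z)e_n$ and unwind it via \eqref{eq:spectral_resolvent}, Lemma \ref{lem:truco_o_trato} and the discrete measures \eqref{eq:discrete_mesures}. The only nitpick is notational: for the second identity the relevant object is the row $e_n^\top\mathscr U$ (entries $u^{\langle N\rangle}_{k,n}$), not the column $\mathscr U e_n$, but your argument clearly intends this and is otherwise sound.
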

\begin{proof}
	From \eqref{eq:discrete_linear_form_typeI} we get
		\begin{align*} 
		\int \frac{\d\Psi^{[N]}(x)	}{z-x}\begin{bNiceMatrix}
			A^{(1)}_{n-1}(x)\\\Vdots \\A^{(p)}_{n-1}(x)
		\end{bNiceMatrix}&=\sum_{k=1}^{N+1}\begin{bNiceMatrix}
		\rho_{k,1}
\\
\Vdots\\
\rho_{k,q}	\end{bNiceMatrix}\frac{w^{\langle N\rangle}_{k,n}}{z-\lambda_k^{[N]}}
	\end{align*}	
	and \eqref{eq:kcomponentelefteigenII} implies
			\begin{align*} 
		\int \frac{\d\Psi^{[N]}(x)	}{z-x}\begin{bNiceMatrix}
			A^{(1)}_{n-1}(x)\\\Vdots \\A^{(p)}_{n-1}(x)
		\end{bNiceMatrix}&=\xi^{-1}\sum_{k=1}^{N+1}\begin{bNiceMatrix}
			u_{k,1}^{\langle N\rangle}
			\\
			\Vdots\\
			u_{k,q}^{\langle N\rangle}	\end{bNiceMatrix}\frac{w^{\langle N\rangle}_{k,n}}{z-\lambda_k^{[N]}}%\\&
		=\xi^{-1} E_{[q]}\sum_{k=1}^{N+1}\begin{bNiceMatrix}
			u_{k,1}^{\langle N\rangle}
			\\
			\Vdotsfor{2}\\\\
			u_{k,N+1}^{\langle N\rangle}\end{bNiceMatrix}\frac{1}{z-\lambda_k^{[N]}}\begin{bNiceMatrix}
			w_{k,1}^{\langle N\rangle}
			&
			\Cdots&
			w_{k,N+1}^{\langle N\rangle}\end{bNiceMatrix}e_n,
	\end{align*}	
and the result follows. Now, proceeding similarly and using \eqref{eq:discrete_linear_form_typeII} and \eqref{eq:kcomponentelefteigenII} we obtain the second relation.
\end{proof}

\section{Spectral Favard theorem}

%At this point we are ready to give one of the main results of the paper, that establish the existence of multiple orthogonal polynomials and corresponding positive
%Lebesgue--Stieltjes
%measures for a given bounded banded Hessenberg matrix that admit a positive bidiagonal factorization. The result is based in the positivity of the Christoffel coefficients established in Theorem \ref{theorem:Christoffel_positivy}.

As the submatrices $T^{[N]}$ are oscillatory, we know that $P_{N+1}(x)$ strictly interlaces $P_N(x)$ so that the positive sequence  $\{\lambda_1^{[N]}\}_{N=1}^\infty$ is a strictly increasing sequence and 
  $\{\lambda_{N+1}^{[N]}\}_{N=1}^\infty$ is a strictly decreasing sequence.
As well, for bounded operators, $\|T\|_\infty <\infty$, we have $\|T^{[N]}\|_\infty <\|T\|_\infty <\infty$. Therefore, there exists the limits
$\zeta\coloneq \lim_{N\to\infty }\lambda_{N+1}^{[N]}\geqslant 0$ and $\eta\coloneq \lim_{N\to\infty }\lambda_1^{[N]}\leqslant \|T\|_\infty$. 
Following \cite{Chihara, Ismail} we call $\Delta\coloneq [\zeta,\eta]\subseteq [0,\|T\|_\infty]$ the true interval of orthogonality, that is the smallest interval containing all zeros of the characteristic polynomials $P_n$, i.e. the eigenvalues of the leading principal submatrices of $T$.

\begin{teo}[Favard spectral representation]\label{theorem:spectral_representation_bis}
	Let us assume that
	\begin{enumerate}
\item The banded  matrix $T$ is bounded and there exist $s\geqslant 0$ such that $T+sI$ has a PBF.
		\item The sequences $\big\{A^{(1)}_n,\dots,A^{(p)}_n\big\}_{n=0}^\infty, \big\{B^{(1)}_n,\dots,B^{(q)}_n\big\}_{n=0}^\infty$ %$\big\{B^{(1)}_n,\dots,B^{(q)}_n\big\}_{n=0}^\infty$ 
		of recursion polynomials are determined by the initial condition matrices $\nu$ and $\xi$,respectively, such that $\nu^{-\top}=\Lambda\mathcal A$, $\xi^{-1}=\mathcal B \Upsilon$, and $\mathcal A\in \R^{p\times p}$ is a nonnegative upper unitriangular matrices and $\mathcal B\in \R^{q\times q}$ is a nonnegative lower unitriangular matrix.
	\end{enumerate}
	%Then, 
	% The corresponding sequence $\{B_n\}_{n=0}^\infty$ of recursion polynomials of type II, see Definition\ref{def:typeII}, 
	Then, there exists $pq$ non decreasing positive functions $\psi_{b,a}$, $a\in\{1,\dots,p\}$ and $b\in\{1,\dots,q\}$
	and corresponding positive Lebesgue--Stieltjes measures $\d\psi_{b,a}$ with compact support $\Delta$ such that the following biorthogonality holds
	\begin{align*}
	\sum_{a=1}^p\sum_{b=1}^q	\int_{\Delta} B^{(b)}_l(x)\d\psi_{b,a}(x) A^{(a)}_{k}(x)&= \delta_{k,l}, &k,l&\in\N_0.
	\end{align*}
\end{teo}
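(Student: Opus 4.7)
The plan is to obtain the limit measures as weak limits of the discrete measures $\d\psi^{[N]}_{b,a}$ from Definition \ref{def:Stieltjes}. First I reduce to the oscillatory case by working with $\tilde T := T+sI$, which has PBF. The leading submatrices $\tilde T^{[N]}$ are oscillatory (hence their eigenvalues are positive and strictly interlace those of $\tilde T^{[N-1]}$). The recursion polynomials for $\tilde T$ are $\tilde A^{(a)}_n(y)=A^{(a)}_n(y-s)$ and $\tilde B^{(b)}_n(y)=B^{(b)}_n(y-s)$, with the \emph{same} initial-condition matrices $\nu,\xi$; hence the Christoffel numbers $\rho^{[N]}_{k,b}, \mu^{[N]}_{k,a}$ constructed from the eigendata of $\tilde T^{[N]}$ coincide with those constructed from $T^{[N]}$ via the determinantal formulas of Definition in Section 5. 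Since $\nu^{-\top}=\Lambda\mathcal A$ and $\xi^{-1}=\mathcal B\Upsilon$ satisfy the tuning condition \eqref{eq:tunnig_initial_conditions} with respect to the PBF of $\tilde T$, Theorem \ref{teo:positivity} gives $\rho^{[N]}_{k,b}, \mu^{[N]}_{k,a}>0$. Therefore the step functions $\psi^{[N]}_{b,a}$ are monotone, and Proposition \ref{pro:boudness_Christoffel} bounds their total variation uniformly in $N$:
\[
\sum_{k=1}^{N+1}\rho^{[N]}_{k,b}\mu^{[N]}_{k,a}=(\xi^{-1}I_{q,p}\nu^{-\top})_{b,a}.
\]

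Second, I apply Helly's selection theorem. Every support $\{\lambda^{[N]}_k\}_{k=1}^{N+1}$ lies in the compact interval $[-s,\|T\|_\infty]$, and each sequence $\{\psi^{[N]}_{b,a}\}_{N\in\N}$ is a sequence of monotone functions with uniformly bounded range on this interval, so Helly produces a pointwise-convergent subsequence with monotone limit. A diagonal extraction over the finite index set $\{1,\ldots,q\}\times\{1,\ldots,p\}$ yields a single subsequence $\{N_j\}$ along which $\psi^{[N_j]}_{b,a}\to\psi_{b,a}$ pointwise at every continuity point of the monotone limit $\psi_{b,a}$, simultaneously for all pairs $(b,a)$. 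Each $\d\psi_{b,a}$ is thus a positive Lebesgue--Stieltjes measure. By Proposition \ref{pro:interlacing} the sequences $\{\lambda^{[N]}_1\}$ and $\{\lambda^{[N]}_{N+1}\}$ are monotone, converging respectively to $\eta$ and $\zeta$, so the support of each $\d\psi_{b,a}$ is contained in the compact true interval of orthogonality $\Delta=[\zeta,\eta]$.

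Finally, pass the biorthogonality to the limit. Fix $k,l\in\N_0$; for every $N_j\geq\max(k,l)$, Theorem \ref{theorem:birothoganality} gives
\[
\sum_{a=1}^p\sum_{b=1}^q\int B^{(b)}_l(x)\,\d\psi^{[N_j]}_{b,a}(x)\,A^{(a)}_k(x)=\delta_{k,l}.
\]
Each $B^{(b)}_l A^{(a)}_k$ is a polynomial, hence a bounded continuous function on $[-s,\|T\|_\infty]$, and the positive measures $\d\psi^{[N_j]}_{b,a}$ converge weakly to $\d\psi_{b,a}$ in the sense of the Helly--Bray theorem; therefore each of the finitely many integrals converges to $\int_\Delta B^{(b)}_l A^{(a)}_k\,\d\psi_{b,a}$, and the biorthogonality claimed in the theorem follows. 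The main obstacle is the positivity of the Christoffel numbers $\rho^{[N]}_{k,b},\mu^{[N]}_{k,a}$: without it the step functions need not be monotone and Helly's theorem does not yield a measure-valued limit. This positivity is precisely where the PBF hypothesis enters, via the Darboux-transformation argument of Theorem \ref{teo:positivity}; once it is in hand, together with the $N$-uniform total-mass bound of Proposition \ref{pro:boudness_Christoffel}, the remaining extraction and limit-passage are standard.
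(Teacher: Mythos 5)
Your proposal is correct and follows essentially the same route as the paper's proof: shift to $T+sI$ to invoke the PBF/oscillatory structure, use Theorem \ref{teo:positivity} for positivity of the Christoffel numbers and Proposition \ref{pro:boudness_Christoffel} for the uniform mass bound, then apply Helly's selection/convergence theorems to pass the discrete biorthogonality of Theorem \ref{theorem:birothoganality} to the limit. Your write-up simply makes explicit some steps the paper leaves implicit (the diagonal subsequence over the $pq$ pairs and the Helly--Bray limit of the integrals).
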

\begin{proof}
		The shift in the matrix $T\to T+sI$ only shifts by $s$ the eigenvalues of the truncations $T^{[N]}$, so that they are positive, and the dependent variable of the recursion polynomials, but do not alter the interlacing properties of the polynomials and the positivity of the corresponding Christoffel numbers. 
From Theorem \ref{teo:positivity} we know that the sequences $\big\{\psi_{a,b}^{[N]}\big\}_{N=0}^\infty$, $a\in\{1,\dots,p\}$, $b\in\{1,\dots,q\}$ given in Definition \ref{def:Stieltjes} are positive. Moreover, Proposition \ref{pro:boudness_Christoffel} implies that they are uniformly bounded and nondecreasing.
	Consequently, following Helly's results, see \cite[\S II]{Chihara} there exist subsequences that converge when $N\to\infty$ to positive nondecreasing functions $\psi_{b,a}$ with support on $\Delta$ and that the discrete biorthogonal relations lead to the stated biorthogonal properties.
\end{proof}

%Consequently, from this biorthogonality we get:

\begin{coro}[Mixed multiple orthogonal relations]\label{cor:mor}
	In the conditions of Theorem \ref{theorem:spectral_representation_bis}, the mixed multiple orthogonal relations are fulfilled
	\begin{align*}
		\sum_{a=1}^p \int_{\Delta }x^n\d \psi_{b,a}(x)A^{(a)}_{m}(x)&=0, & 
		n&\in\big\{0,\dots,\deg B^{(b)}_{m-1}\big\},&	b&\in\{1,\dots,q\},\\
		\sum_{b=1}^q \int_{\Delta }	B^{(b)}_{m}(x)\d \psi_{b,a}(x)x^n &=0, &				n&\in\big\{0,\dots,\deg A^{(a)}_{m-1}\big\}, & a&\in\{1,\dots,p\}.
	\end{align*}
%	\begin{align*}
%	\sum_{a=1}^p \int_{\Delta }A^{(a)}_{kq+j}(x)x^n\d \psi_{b,a}(x)&=0, & 
%	&\left\{\begin{aligned}
%		n&\in\{0,1,\dots,k\}, &b&\in\{1,\dots,j\},\\ 
%		n&\in\{0,1,\dots, k-1\},& b&\in\{j+1,\dots,q\},
%	\end{aligned}\right.\\
%	\sum_{b=1}^q \int_{\Delta }	B^{(b)}_{kp+j}(x)x^n \d \psi_{b,a}(x)&=0, &
%	&	\left\{
%	\begin{aligned} n&\in\{0,1,\dots,k\}, &a&\in\{1,\dots,j\},\\ n&\in\{0,1,\dots, k-1\}, & a&\in\{j+1,\dots,p\}.\end{aligned}
%	\right.
%\end{align*}
%
%				\begin{align*}
%	\sum_{a=1}^p	\left\langle \muup^{[N]}_{b,a},A^{(a)}_{m}x^n\right\rangle&=0, & 
%	n&\in\big\{0,\dots,\deg B^{(b)}_{m-1}\big\},&	b&\in\{1,\dots,q\},\\
%	\sum_{b=1}^q	\left\langle \muup^{[N]}_{b,a},B^{(b)}_{m}x^n\right\rangle&=0, 
%	&				n&\in\big\{0,\dots,\deg A^{(a)}_{m-1}\big\}, & a&\in\{1,\dots,p\}.
%\end{align*}
\end{coro}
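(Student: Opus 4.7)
The plan is to derive the corollary directly from the biorthogonality established in Theorem \ref{theorem:spectral_representation_bis}, using exactly the same algebraic mechanism that carries the biorthogonality \eqref{eq:mixed_multiple_biorthogonality} into Corollary \ref{coro:mixed_multiple_orthogonality} for the step-line construction. The only ingredients needed beyond biorthogonality are the degree formulas of Proposition \ref{pro:degrees} together with the triangular factorization $B=LX_{[q]}$, $A=(X_{[p]})^\top U$.

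First I would fix $b\in\{1,\dots,q\}$ and $n$ with $n\leqslant\deg B^{(b)}_{m-1}$, and prove the auxiliary claim that the row vector of polynomials $x^n e_b^\top$ (with $e_b\in\R^q$) lies in the linear span of the $q$-row vectors $\big(B^{(1)}_l,\dots,B^{(q)}_l\big)$ as $l$ runs over $\{0,1,\dots,m-1\}$. Indexing monomial rows $x^j e_{b'}^\top$ by $jq+b'-1$, so that row $l$ of $B=LX_{[q]}$ is a linear combination of $x^j e_{b'}^\top$ with $jq+b'-1\leqslant l$, the nonsingular lower triangular structure of $L$ shows that the first $m$ rows of $B$ span exactly the $m$ coordinate directions with indices $0,1,\dots,m-1$. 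A direct check using Proposition \ref{pro:degrees} translates the hypothesis $n\leqslant\deg B^{(b)}_{m-1}=\lceil(m+1-b)/q\rceil-1$ into $nq+b-1\leqslant m-1$, so $x^n e_b^\top$ sits in the required span.

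Substituting the resulting expansion $x^n e_b^\top=\sum_{l=0}^{m-1}c_l\big(B^{(1)}_l,\dots,B^{(q)}_l\big)$ into $\sum_{a=1}^p\int_\Delta x^n\d\psi_{b,a}(x)A^{(a)}_m(x)$ and invoking the biorthogonality of Theorem \ref{theorem:spectral_representation_bis} gives $\sum_{l=0}^{m-1}c_l\delta_{l,m}=0$, establishing the first family of orthogonality relations. The second family is obtained by the symmetric argument: for $a\in\{1,\dots,p\}$ and $n\leqslant\deg A^{(a)}_{m-1}$ one expresses $x^n e_a$ as a linear combination of the column vectors $(A^{(1)}_l,\dots,A^{(p)}_l)^\top$ with $l<m$, using now the upper unitriangular structure of $U$ in $A=(X_{[p]})^\top U$ and the corresponding degree formula for $A^{(a)}_l$.

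There is no genuine obstacle: the content of Theorem \ref{theorem:spectral_representation_bis} already provides the biorthogonality for the limit measures $\d\psi_{b,a}$, and the reduction to mixed multiple orthogonality is purely algebraic and identical to the step-line derivation. The only point requiring care is the precise arithmetic match between the degree thresholds $\deg B^{(b)}_{m-1}$, $\deg A^{(a)}_{m-1}$ and the step-line indexing, which is immediate from Proposition \ref{pro:degrees}.
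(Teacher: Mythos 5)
Your proposal is correct and follows essentially the route the paper intends: the corollary is presented as an immediate consequence of the biorthogonality in Theorem \ref{theorem:spectral_representation_bis}, obtained exactly as Corollary \ref{coro:mixed_multiple_orthogonality} follows from \eqref{eq:mixed_multiple_biorthogonality}, namely by expanding $x^n e_b^\top$ (respectively $x^n e_a$) as a constant-coefficient combination of the recursion-polynomial rows (respectively columns) of index $<m$, using the triangular structure coming from the degree formulas. One small caveat: in the Favard setting the factorization $B=LX_{[q]}$, $A=(X_{[p]})^\top U$ is not available a priori, so the triangular change of basis must be justified directly from Proposition \ref{pro:degrees} (exact degrees with nonvanishing leading coefficients, guaranteed by \eqref{eq:not_zero_extreme_diagonals}) together with the initial conditions, which your appeal to Proposition \ref{pro:degrees} in fact already supplies.
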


%\begin{coro}\label{cor:mor}
%In the conditions of Theorem \ref{theorem:spectral_representation_bis}, the mixed multiple orthogonal relations 
%	\begin{align*}
%	\sum_{a=1}^p \int_{\Delta }A^{(a)}_{kp+j}(x)x^n\d \psi_{b,a}(x)&=0, & 
%	&\left\{\begin{aligned}
%		n&\in\{0,1,\dots,kp+j-1\}, &j&\in\{1,\dots,p-1\},\\ k&\in\{1,\dots, N\},& b&\in\{1,\dots,q\},
%	\end{aligned}\right.\\
%	\sum_{b=1}^p \int_{\Delta }	B^{(b)}_{kq+j}(x)x^n \d \psi_{b,a}(x)&=0, &
%	&	\left\{
%		\begin{aligned} n&\in\{0,1,\dots,kq+j-1\}, , &j&\in\{1,\dots,q-1\},\\ k&\in\{1,\dots, N\}, & a&\in\{1,\dots,p\},\end{aligned}
%	\right.
%\end{align*}
%hold.
%%of type II 
%%	\begin{align*}
%%		\int_\Delta x^m B_{kp +j}\d \psi_r(x)&=0, & m&=0,\dots,k,& r=1, \dots, j ,\\
%%		\int_\Delta x^m B_{kp +j}\d \psi_r(x)&=0, & m&=0,\dots,k-1,& r=j+1, \dots, p ,\end{align*}
%%	with $j = 0 , \ldots , p-1$,
%%	and of type I
%%	\begin{align*}
%%		\int_{\Delta} \big( A^{(1)}_{kp+j}(x)\d \psi_1(x)+\cdots + A^{(p)}_{kp+j}(x)\d \psi_p(x)\big)x^n&=0, & n&\in\{0,1,\dots,kp+j-1\},
%%	\end{align*}
%%	are satisfied.
%\end{coro}

\begin{pro}[Spectral representation of moments and Stieltjes--Markov functions]\label{pro:spectral_moments_Weyl}
In the conditions of Theorem \ref{theorem:spectral_representation_bis} and in terms of the spectral functions $\psi_{b,a}$, $a\in\{1,\dots,p\}$, $b\in\{1,\dots,q\}$ we find the following relations between entries of powers or the resolvent of the banded matrix and moments or the Cauchy transform of the measures, respectively:
	\begin{align*}
	(	e_b^\xi)^\top T^n e^\nu_a&=\int_\Delta x^n\d\psi_{b,a}(x),& (e_b^\xi)^\top (z I- T)^{-1}e^\nu_a&=\int_\Delta\frac{\d\psi_{b,a}(x)}{z-x}\eqcolon\hat \psi_{b,a}(z).
	\end{align*}
\end{pro}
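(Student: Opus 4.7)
The plan is to obtain both identities by passing to the limit $N\to\infty$ in the finite-truncation formulas of Propositions~\ref{pro:eq:moments_discrete} and~\ref{pro:Weyl_functions}, exploiting the weak convergence $\d\psi_{b,a}^{[N]}\to\d\psi_{b,a}$ on the compact interval $\Delta$ that was established in the proof of Theorem~\ref{theorem:spectral_representation_bis}.

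For the first (moment) identity, fix $a\in\{1,\dots,p\}$, $b\in\{1,\dots,q\}$, and $n\in\N_0$. The vectors $e^\nu_a$ and $e^\xi_b$ have nonzero entries only within the first $\max(p,q)$ coordinates, and since $T$ is banded with $p$ subdiagonals and $q$ superdiagonals, $T^n$ is banded with bandwidth at most $n(p+q)+1$. Hence the scalar $(e_b^\xi)^\top T^n e^\nu_a$ depends only on entries of $T^n$ lying in a top-left block of some fixed size $M_n\times M_n$ depending only on $n$. For $N\geqslant M_n$ this block is unaffected by the truncation, so
\begin{equation*}
(e_b^\xi)^\top (T^{[N]})^n e^\nu_a=(e_b^\xi)^\top T^n e^\nu_a.
\end{equation*}
Combined with Proposition~\ref{pro:eq:moments_discrete}, this gives $(e_b^\xi)^\top T^n e^\nu_a=\int_\Delta x^n\d\psi_{b,a}^{[N]}(x)$ for all sufficiently large $N$. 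Letting $N\to\infty$ along the subsequence for which $\d\psi_{b,a}^{[N]}$ converges weakly to $\d\psi_{b,a}$, and using that $x\mapsto x^n$ is continuous on the compact $\Delta$, the right-hand side tends to $\int_\Delta x^n\d\psi_{b,a}(x)$, yielding the first identity.

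For the Stieltjes--Markov identity, I would first restrict to $|z|>\|T\|_\infty$, where the Neumann series $(zI-T)^{-1}=\sum_{n=0}^{\infty}z^{-n-1}T^n$ converges in operator norm, and since $\Delta\subset[0,\|T\|_\infty]$ the series $\int_\Delta\d\psi_{b,a}(x)/(z-x)=\sum_{n=0}^{\infty}z^{-n-1}\int_\Delta x^n\d\psi_{b,a}(x)$ converges absolutely. Term-by-term substitution of the moment identity just proven establishes
\begin{equation*}
(e_b^\xi)^\top(zI-T)^{-1}e^\nu_a=\int_\Delta\frac{\d\psi_{b,a}(x)}{z-x}\qquad\text{for }|z|>\|T\|_\infty.
\end{equation*}
Since both sides are holomorphic in $z$ on a common domain containing $\C\setminus\Delta$ (the spectrum of $T$ lying in $\Delta$ as a consequence of the moment representation just obtained), the identity extends to all $z\in\C\setminus\Delta$ by analytic continuation.

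The main obstacle is the ``stabilization'' step $(e_b^\xi)^\top (T^{[N]})^n e^\nu_a=(e_b^\xi)^\top T^n e^\nu_a$, which hinges on tracking the interplay between the localized support of the initial-condition vectors $e^\nu_a$, $e^\xi_b$ and the bandedness of $T$; once this localization is secured, the rest is a routine application of weak convergence on the compact $\Delta$ together with analyticity.
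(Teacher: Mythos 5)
Your argument for the moment identity is essentially the paper's own route: the paper simply invokes Proposition \ref{pro:eq:moments_discrete} together with Helly's second theorem, and your stabilization step $(e_b^\xi)^\top (T^{[N]})^n e^\nu_a=(e_b^\xi)^\top T^n e^\nu_a$ for all sufficiently large $N$ is precisely the localization detail that makes that invocation legitimate (it is the same path-counting that underlies Proposition \ref{pro:degrees_precision}, whose degrees of precision $d_{b,a}(N)\to\infty$); note also that all zeros $\lambda^{[N]}_k$ lie in $\Delta$ by construction, so the discrete measures are supported there and Helly applies directly. For the Stieltjes--Markov identity you take a genuinely different route: the paper passes to the limit in the Weyl functions of Proposition \ref{pro:Weyl_functions} (again via Helly), whereas you expand the resolvent in a Neumann series for $|z|>\|T\|_\infty$ and substitute the moment identity term by term. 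This is clean, and it has the merit of actually identifying the limit with $(e_b^\xi)^\top(zI-T)^{-1}e^\nu_a$, a step the paper's one-line proof leaves implicit.

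One point needs correcting: your extension to all $z\in\C\setminus\Delta$ by analytic continuation rests on the assertion that the spectrum of $T$ lies in $\Delta$ ``as a consequence of the moment representation just obtained.'' That inference is not valid: the moment identity only controls the particular entries $(e_b^\xi)^\top T^n e^\nu_a$, and since $T$ is neither self-adjoint nor normal, nothing established so far (nor anything claimed in the paper) locates the spectrum of the semi-infinite operator $T$ inside $\Delta$; for non-normal banded operators the spectrum can well leave the real line. What your argument legitimately yields is the identity for $|z|>\|T\|_\infty$ — which is all the statement requires — together with its continuation to the connected component of the resolvent set of $T$ intersected with $\C\setminus\Delta$ that contains a neighborhood of infinity. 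With the continuation claim weakened in this way, your proof is correct.
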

\begin{proof}
	Propositions \ref{pro:eq:moments_discrete} and \ref{pro:Weyl_functions} and	Helly's second theorem leads to the spectral representation for the moments and Stieltjes--Markov functions $\hat \psi_{b,a}(z)$ of $T$.
\end{proof}

\begin{rem}
	In terms of 
$\Psi=\begin{bNiceMatrix}
		\psi_{1,1}&\Cdots &\psi_{1,p}\\
		\Vdots & & \Vdots\\
		\psi_{q,1}&\Cdots &\psi_{q,p}
	\end{bNiceMatrix}$
we have
	\begin{align*}
	\xi^{-1} E_{[q]}T^n E_{[p]}^\top \nu^{-\top}&=\int_\Delta x^n\d\Psi(x), &	\xi^{-1} E_{[q]} (z I- T)^{-1}E_{[p]}^\top \nu^{-\top}&=\int_\Delta\frac{\d\Psi(x)}{z-x}.
\end{align*}
Now, for $r\in\N$, we use the semi-infinite matrix
\begin{align*}
	E_{[r]}\coloneq\left[\begin{NiceArray}{cccc|cccccc}
	1&0&\Cdots&0 & 0&\Cdots&&& &\\
	0&\Ddots^{\text{$r$ times}}&\Ddots& \Vdots &\Vdots^{\text{$r$ times}}&&&&\\
	\Vdots&\Ddots&\Ddots& 0 &&&&&\\
	0&\Cdots&0& 1 &0&\Cdots&&&
	\end{NiceArray}\right].
\end{align*}
\end{rem}

\begin{pro}[Normal convergence of Weyl functions]
	Given the conditions of Theorem \ref{theorem:spectral_representation_bis}, the Weyl functions in Proposition \ref{pro:Weyl_functions} converge uniformly in compact subsets of $\bar\C\setminus \Delta$ to the Stieltjes--Markov functions, i.e.,
	\begin{align*}
		S^{[N]}_{b,a}(z)= \frac{P^{(b,a)}_{N+1}(z)}{P_{N+1}(z)}&\xrightrightarrows [N\to\infty]{}\hat \psi_{b,a}(z), & a&\in\{1,\dots,p\}, & b&\in\{1,\dots,q\}.
	\end{align*}
\end{pro}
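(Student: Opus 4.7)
The plan is to combine three ingredients already established in the paper: (i) the representation $S^{[N]}_{b,a}(z)=\int\frac{\d\psi^{[N]}_{b,a}(x)}{z-x}$ from Proposition \ref{pro:Weyl_functions}; (ii) the spectral identity $\hat\psi_{b,a}(z)=\int_\Delta\frac{\d\psi_{b,a}(x)}{z-x}$ from Proposition \ref{pro:spectral_moments_Weyl}; and (iii) the fact, coming from Proposition \ref{pro:boudness_Christoffel} together with Theorem \ref{teo:positivity}, that the discrete measures $\d\psi^{[N]}_{b,a}$ are positive with total masses uniformly bounded by $(\xi^{-1}I_{q,p}\nu^{-\top})_{b,a}$, and supported in $[\lambda^{[N]}_{N+1},\lambda^{[N]}_1]\subset [0,\|T\|_\infty]$ (after, if needed, the shift of item (i) of Theorem \ref{theorem:spectral_representation_bis}).

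First, I would establish locally uniform boundedness. Fix a compact set $K\subset\bar\C\setminus\Delta$ and let $d\coloneq\operatorname{dist}(K,\Delta)>0$. For $N$ large, the eigenvalues of $T^{[N]}$ lie in a slight enlargement of $\Delta$, so there is $N_0$ such that for $N\ge N_0$ the support of $\d\psi^{[N]}_{b,a}$ is contained in a compact set $\Delta'$ with $\operatorname{dist}(K,\Delta')\ge d/2$. Then
\begin{align*}
\bigl|S^{[N]}_{b,a}(z)\bigr|&\le \frac{2}{d}\int\d\psi^{[N]}_{b,a}\le \frac{2M}{d},&z&\in K,
\end{align*}
with $M\coloneq (\xi^{-1}I_{q,p}\nu^{-\top})_{b,a}$. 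Hence $\{S^{[N]}_{b,a}\}_{N\ge N_0}$ is a normal family of holomorphic functions on $\bar\C\setminus\Delta$, vanishing at $\infty$.

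Next, I would identify the limit. The proof of Theorem \ref{theorem:spectral_representation_bis} has already invoked Helly's selection theorem to extract a subsequence along which $\psi^{[N]}_{b,a}\to\psi_{b,a}$ pointwise on a dense subset of $\Delta$. For each fixed $z\in\bar\C\setminus\Delta$, the integrand $x\mapsto 1/(z-x)$ is continuous on a compact neighbourhood of the supports, so Helly's second theorem gives $S^{[N]}_{b,a}(z)\to\hat\psi_{b,a}(z)$ along the subsequence. Combined with the normal family property from the previous paragraph, any subsequential limit in the compact-open topology is holomorphic on $\bar\C\setminus\Delta$ and must agree with $\hat\psi_{b,a}$ on $\bar\C\setminus\Delta$, by the identity theorem; uniqueness of the limit upgrades subsequential convergence to convergence of the full sequence.

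The main obstacle, which is mild here, is the subsequence issue: Helly's theorem only yields weak convergence along a subsequence, and a priori different subsequences could select different limit spectral measures. This is overcome precisely by the normal-family/unique-limit argument above, since the Stieltjes--Markov functions $\hat\psi_{b,a}$ determine the measures $\d\psi_{b,a}$ uniquely (Stieltjes inversion), so every convergent subsequence of $\{S^{[N]}_{b,a}\}$ has the same holomorphic limit $\hat\psi_{b,a}$, and the convergence is therefore uniform on compact subsets of $\bar\C\setminus\Delta$.
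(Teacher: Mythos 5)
Your first step is sound and is essentially what the paper does as well: positivity of the Christoffel numbers together with the mass bound of Proposition \ref{pro:boudness_Christoffel}, plus the fact that all zeros of $P_{N+1}$ lie in $\Delta$, gives $\bigl|S^{[N]}_{b,a}(z)\bigr|\leqslant M/\operatorname{dist}(K,\Delta)$ on any compact $K\subset\bar\C\setminus\Delta$, and the paper then finishes with Vitali's theorem. The genuine gap in your proposal is in the step that upgrades subsequential convergence to convergence of the full sequence, which you yourself single out as the main obstacle. Helly only gives pointwise convergence $S^{[N]}_{b,a}(z)\to\hat\psi_{b,a}(z)$ along the particular subsequence used in the proof of Theorem \ref{theorem:spectral_representation_bis}. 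For an arbitrary locally uniformly convergent subsequence, with limit $F$, you have no set on which $F$ and $\hat\psi_{b,a}$ are known to agree, so the identity theorem has nothing to act on; and the appeal to Stieltjes inversion points the wrong way: inversion recovers a measure from its Cauchy transform, whereas what you must show is that two a priori different Helly limit measures, coming from two different subsequences, have the same Cauchy transform --- inversion does not give that.

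The missing ingredient is a subsequence-independent identification of the limit, and the paper's own machinery supplies it in two ways. Either (a) use Proposition \ref{pro:eq:moments_discrete}: for each fixed $n$ the discrete moments $\int x^n\,\d\psi^{[N]}_{b,a}(x)=(e_b^\xi)^\top\big(T^{[N]}\big)^n e^\nu_a$ converge as a full sequence (indeed they stabilize once $N$ exceeds roughly $(n+1)q$, by the banded structure) to $(e_b^\xi)^\top T^n e^\nu_a$; hence every Helly subsequential limit measure has the same moments, and since all these measures are supported in the fixed compact interval $\Delta$, their Cauchy transforms have identical Laurent expansions at infinity and so coincide on the connected set $\bar\C\setminus\Delta$; thus every subsequential limit of $S^{[N]}_{b,a}$ equals $\hat\psi_{b,a}$ and Montel gives full convergence. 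Or (b) prove full-sequence pointwise convergence directly for $|z|>\|T\|_\infty$ from the Neumann series $S^{[N]}_{b,a}(z)=\sum_{n\geqslant 0}(e_b^\xi)^\top\big(T^{[N]}\big)^n e^\nu_a\, z^{-n-1}$, dominated termwise using $\|T^{[N]}\|_\infty\leqslant\|T\|_\infty$, and then invoke Vitali's theorem on $\bar\C\setminus\Delta$, which is precisely the paper's route (uniform boundedness plus Vitali, citing \cite{Simon2}). With either patch your argument closes; as written, the uniqueness-of-limit step does not.
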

\begin{proof}
Notice the uniform boundedness  in $N$  in compact subsetsof $\bar\C\setminus \Delta$ of the Weyl functions $S^{[N]}_{b,a}$ for each pair $a,b$. Then, Vitali convergence theorem see \cite[Theorem 6.2.8]{Simon2}
leads to the result.\end{proof}

\begin{rem}
	Despite the positivity of Christoffel numbers described in Theorem \ref{teo:positivity} we only have the bound proved in Proposition \ref{pro:boudness_Christoffel}. Therefore, we know that the functions $\psi^{[N]}_{b,a}$ given in Definition \ref{def:Stieltjes} that are right continuous, of bounded variation, increasing and positive are also uniformly bounded. Therefore, Helly's theorem can be applied to the large $N$ limit.
	
	However, this is not applicable to each family of Christoffel numbers separately. That is,
	\begin{align*}
		\varphi^{[N]}_{b}&\coloneq \begin{cases}
			0, & x<\lambda^{[N]}_{N+1},\\[2pt]
			\rho^{[N]}_{1,b}+\cdots+\rho^{[N]}_{k,b}, & \lambda^{[N]}_{k+1}\leqslant x< \lambda^{[N]}_{k}, \quad k\in\{1,\dots,N\},\\[2pt]
			\rho^{[N]}_{1,b}+\cdots+\rho^{[N]}_{N+1,b},
			& x \geqslant \lambda^{[N]}_{1},
		\end{cases}\\
		\tilde \varphi^{[N]}_{a}&\coloneq \begin{cases}
			0, & x<\lambda^{[N]}_{N+1},\\[2pt]
			\mu^{[N]}_{1,a}+\cdots+\mu^{[N]}_{k,a}, & \lambda^{[N]}_{k+1}\leqslant x< \lambda^{[N]}_{k}, \quad k\in\{1,\dots,N\},\\[2pt]
			\mu^{[N]}_{1,a}+\cdots+\mu^{[N]}_{N+1,a} ,
			& x \geqslant \lambda^{[N]}_{1},
		\end{cases}
	\end{align*}
	are right continuous, of bounded variation, increasing and positive. But, in principle, they might be not bounded and therefore Helly's result may not be applicable. 
	
	Thus, to get measures from these  functions we need to ensure the existence of bounds as follows
$\rho^{[N]}_{1,b}+\cdots+\rho^{[N]}_{N+1,b}\leqslant R_{b}$ and $\mu^{[N]}_{1,a}+\cdots+\mu^{[N]}_{N+1,a} \leqslant M_{a}$.
	For such situation, the large limit will lead to the existence of spectral measures $\d\varphi_a$, $a\in {1,\dots,p}$ and $\d\tilde \varphi_b$, $b\in\{1,\dots,q\}$. If these measures are absolutely continuous w.r.t. the measure $\d\mu$, with Radon--Nikodym derivatives the weights $w_a$ and $\tilde w_b$, respectively, we could write $\d\varphi_a=w_a\d\mu$ and $\d\tilde\varphi_b=\tilde w_b\d\mu$. A natural conjecture, that we have not yet proven, is that in this situation $\d\psi_{b,a}=w_a\tilde w_b\d\mu$. This rank one simplification is assumed in a large number of papers dealing with mixed multiple orthogonality.
\end{rem}

\section{Mixed multiple Gaussian quadrature and degrees of precision}

Gaussian quadrature formulas are an important tool in the theory of orthogonal polynomials and its applications to approximation theory, see for example \cite{Chihara,Ismail}. Its extension to non-mixed multiple orthogonal polynomials was discussed in \cite{Borges,Ulises-Illan-Guillermo,Coussement-VanAssche,PBF_1}, degrees of precision were presented in \cite{PBF_1}. Now, we give its extension to the mixed multiple orthogonal situation. Notice that for $p=q$ we are dealing with standard matrix orthogonal polynomials and such quadrature formulas have been discussed for this situation,
 %in \cite{Basu_Bose,sinap,Duran,Duran_Lopez-Rodriguez,Duran_Defez,Duran_Polo},
see the excellent review \cite{Duran_Grunbaum} and references therein cited.

Let us assume  that $T$ has a PBF, and that the conditions of Theorem \ref{theorem:spectral_representation_bis} hold, and introduce:
\begin{defi}
	The degrees of precision or orders $d_{b,a}(N)$, $a\in\{1,\dots,p\}$, $b\in\{1,\dots,q\}$, are the largest natural numbers such that
	\begin{align*}
		\big(e_b^\xi\big)^\top T^n e^\nu_a &=	(e_b^\xi)^\top \big(T^{[N]}\big)^n e^\nu_a, & 0&\leqslant n\leqslant d_{b,a}(N), & a&\in\{1,\dots,p\},& b&\in\{1,\dots,q\} .
	\end{align*}
\end{defi}

\begin{pro}\label{pro:degrees_precision}
	In terms of the recursion polynomial degrees, see Proposition \ref{pro:degrees},	the degrees of precision~are
	\begin{align*}
		d_{b,a}(N)&= \deg A^{(a)}_N+\deg B^{(b)}_N+1=\left\lceil\frac{ N+2-a}{p}\right\rceil+\left\lceil\frac{ N+2-b}{q}\right\rceil-1,& a&\in\{1,\dots,p\}, & b&\in\{1,\dots,q\}.
	\end{align*}
\end{pro}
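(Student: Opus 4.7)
By Propositions \ref{pro:spectral_moments_Weyl} and \ref{pro:eq:moments_discrete}, $d_{b,a}(N)$ is the largest $n$ for which $(e^\xi_b)^\top T^n e^\nu_a = (e^\xi_b)^\top (T^{[N]})^n e^\nu_a$. My plan is to reduce this to a bookkeeping problem on the banded action of $T$ using the two distinguished vectors. Since $\nu^{-1}$ and $\xi^{-1}$ are lower triangular unipotent, the embedding $E_{[p]}^\top\nu^{-\top}e^{[p]}_a$ defining $e^\nu_a$ is supported on $\{0,\ldots,a-1\}$ with $(e^\nu_a)_{a-1}=1$, and analogously $e^\xi_b$ is supported on $\{0,\ldots,b-1\}$ with $(e^\xi_b)_{b-1}=1$; by the choice \eqref{eq:tunnig_initial_conditions}, all their entries are nonnegative.

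The key lemma, proved by induction on $k$, is that $T^k e^\nu_a$ has support in $\{0,\ldots,a-1+kp\}$ and agrees with $(T^{[N]})^k e^\nu_a$ on $\{0,\ldots,N\}$ provided no previous iterate leaks past index $N$, which holds exactly when $k\leq \lceil(N+2-a)/p\rceil$ (this uses the arithmetic identity $\lfloor(N-a+1)/p\rfloor+1=\lceil(N+2-a)/p\rceil$). The dual statement holds for $(T^\top)^j e^\xi_b$ with $q$ and $b$ in place of $p$ and $a$. Now I split
\[
(e^\xi_b)^\top T^n e^\nu_a=\langle(T^\top)^j e^\xi_b,\,T^{n-j}e^\nu_a\rangle,
\]
and for $n\leq d_{b,a}(N)$ I take $j=\lceil(N+2-b)/q\rceil-1$ and $n-j\leq\lceil(N+2-a)/p\rceil$; then $(T^\top)^j e^\xi_b$ is supported entirely in $\{0,\ldots,N\}$ (this truncates the sum to $i\leq N$) and $T^{n-j}e^\nu_a$ coincides with $(T^{[N]})^{n-j}e^\nu_a$ on those indices, so the infinite inner product collapses to the truncated one and the moments match, giving the lower bound $d_{b,a}(N)\geq \lceil(N+2-a)/p\rceil+\lceil(N+2-b)/q\rceil-1$.

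For sharpness I consider $n=d_{b,a}(N)+1$, where no valid $j$ exists. The difference $(T^n)_{i,j}-((T^{[N]})^n)_{i,j}$ is the sum over length-$n$ paths from $j$ to $i$ that leave $\{0,\ldots,N\}$; under the PBF hypothesis every entry of $T$ is nonnegative, so each such contribution is nonnegative, and at least one exists for $(i,j)=(b-1,a-1)$, namely the extremal zig-zag that climbs the $p$-th subdiagonal from $a-1$ to height $(a-1)+\lceil(N+2-a)/p\rceil\,p>N$ and descends along the $q$-th superdiagonal into $b-1$, whose entries are strictly positive by \eqref{eq:not_zero_extreme_diagonals}. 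Since $e^\xi_b,e^\nu_a$ have nonnegative entries with $(e^\xi_b)_{b-1}(e^\nu_a)_{a-1}=1$, no cancellation can destroy this positive excess in the weighted sum, so the two moments genuinely differ and the matching upper bound follows.

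The main obstacle is the arithmetic of indices: keeping careful track of when the support of successive iterates first overflows $\{0,\ldots,N\}$ and converting the resulting floor bounds into the ceiling form of the statement via $\lfloor m/p\rfloor+1=\lceil(m+1)/p\rceil$. A secondary subtlety is ensuring that in the tightness step the extremal staircase path is not silently cancelled by other exit paths, which is exactly where the positivity afforded by the PBF together with the nonnegativity of $e^\nu_a$ and $e^\xi_b$ delivered by \eqref{eq:tunnig_initial_conditions} becomes essential.
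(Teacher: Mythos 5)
Your proposal follows essentially the same route as the paper: the paper's proof tracks nonzero products $T_{i_0,i_1}\cdots T_{i_{n-1},i_n}$ with $i_0\leqslant b-1$, $i_n\leqslant a-1$, which can climb at most $q$ and descend at most $p$ per step, and identifies the shortest such string that exits the truncation; your support-propagation lemma combined with the splitting $(e^\xi_b)^\top T^n e^\nu_a=\langle (T^\top)^j e^\xi_b,\,T^{n-j}e^\nu_a\rangle$ is exactly this bookkeeping, and your no-cancellation argument for $n=d_{b,a}(N)+1$ is the positivity argument the paper invokes (in the quadrature theorem) for optimality. The lower-bound half is fine, up to the trivial adjustment $j=\min\big\{n,\lceil(N+2-b)/q\rceil-1\big\}$ so that $n-j\geqslant 0$ for small $n$.

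One step of the sharpness half is imprecise as written: the ``extremal zig-zag'' you exhibit cannot in general consist only of extreme-diagonal entries. Climbing from $a-1$ in $\lceil(N+2-a)/p\rceil$ steps of exactly $p$ and then descending in the remaining $\lceil(N+2-b)/q\rceil$ steps of exactly $q$ lands at $a-1+\lceil(N+2-a)/p\rceil\,p-\lceil(N+2-b)/q\rceil\,q$, which need not equal $b-1$ (and when $p$ and $q$ differ the mismatch can go either way), so at least one step of the escaping path must have size strictly smaller than $p$ or $q$; that entry lies strictly inside the band and its nonvanishing is \emph{not} guaranteed by \eqref{eq:not_zero_extreme_diagonals}. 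The repair is immediate under the standing hypothesis: the PBF forces every in-band entry of $T$ to be strictly positive (it is a product of bidiagonal InTN factors with positive parameters), so the adjusted escaping path of length $d_{b,a}(N)+1$ --- climb to exactly $N+1$, then descend to $a-1$ with one shortened step --- has strictly positive weight, and since $e^\nu_a$, $e^\xi_b$ are entrywise nonnegative with unit entries in positions $a-1$, $b-1$ by \eqref{eq:tunnig_initial_conditions}, the positive excess cannot be cancelled. With that correction (or by invoking the paper's argument via total nonnegativity of $T$) your proof is complete; the paper's own write-up is comparably terse at exactly this point.
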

%\textcolor{BrickRed}{
	\begin{proof}
The degrees of precision give us the last power so that the only entries in the principal truncations are involved. 

Notice that $e_b^\xi$ and $e^\nu_a$ are positive vectors involving the first $b$ or $a$ entries, respectively. Therefore, in the computation of $\big(e_b^\xi\big)^\top T^n e^\nu_a $, what we need to analyze is the $(i_0,i_n)$, for $i_0\in\{0,\dots,b-1\}$ and $i_n\in\{0,\dots,a-1\}$, entry of the $n$-th power of the banded matrix $T$ and check whether it involves nonzero factors $T_{i,j}$ with $i>N$ and/or $j>N$. Notice that these entries are sums of products of the form
$T_{i_0,i_1}T_{i_1,i_2}\cdots T_{i_{n-2},i_{n-1}}T_{i_{n-1},i_n}$ where all the factors in the product are chosen to be positive entries of the banded matrix $T$. Hence, we can identify each of these positive contributions to our $(i_0,i_n)$ entry as the string $(i_0,i_1,i_2,\dots,i_{n-1},i_n)$, we refer to each element in this string as terms. For each $(b,a)$ the strings that involve more terms are those that correspond to $i_0=b-1$ and $i_n=a-1$. Thus, these maximal strings are the ones that could achieve the appearance of non desired nonzero factors $T_{i,j}$ with $i>N$ and/or $j>N$ the sooner.
Hence, for given $a$ and $b$, we need to find those maximal strings that achieve this target in the fastest way, i.e. by jumping in the string elements by $q$ by $q$, and stopping just before the first zero factor. Consequently, we are dealing with the following strings 
\begin{multline*}\small
%	T_{a-1,a-1+q}T_{a-1+q,a-1+2q}%T_{a-1+2p,a-1+3p}
%	\cdots T_{a-1+(r-1)q,a-1+rq} T_{a-1+rq,a-1+rq-p}T_{a-1+rq-p,a-1+rq-2p}\cdots T_{a-1+rq-(s-1)p,b-1}
	(b-1,b-1+q,b-1+2q,	\dots, b-1+(r-1)q,b-1+rq,\\b-1+rq-p,b-1+rq-2p,\cdots,b-1+rq-(s-1)p,a-1)
\end{multline*} 
with $r,s\in\N$ and such $b-1+rq$ is at much $N$, so that 
$r=\left\lceil\frac{ N+2-b}{q}\right\rceil-1$.
Now, $s$ is such we get down to $a-1$ as fast as possible; i.e., by jumps in the matrix indexes of length $p$, so that we need 
$	s=\left\lceil\frac{ N+2-a}{p}\right\rceil$
 descending jumps and corresponding factors. Finally, the power $n$ will be the sum of both the number of ascending jumps and the descending jumps, i.e. $n=r+s$. 
\end{proof}
%}

\begin{teo}[Mixed multiple Gaussian quadrature formulas]
	The following Gauss quadrature formulas hold
	\begin{align}\label{eq:multiple_Gauss_quadrature}
		\int_\Delta x^n\d\psi_{b,a}(x)&=\sum_{k=1}^{N+1}\rho_{k,b}^{[N]}\mu_{k,a}^{[N]}\big( \lambda_k^{[N]}\big)^n, & 0&\leqslant n\leqslant d_{b,a}(N),
		& a&\in\{1,\dots,p\},& b&\in\{1,\dots,q\}.
	\end{align}
	Here the degrees of precision $d_{b,a}$ are optimal (for any power largest than $n$ a positive remainder appears, an exactness is lost). 
\end{teo}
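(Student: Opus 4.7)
The proof has two parts: establishing \eqref{eq:multiple_Gauss_quadrature} in the stated range, then showing the degree $d_{b,a}(N)$ is sharp with a positive remainder.

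For the quadrature identity, I would simply chain together three results already in hand. Proposition \ref{pro:spectral_moments_Weyl} writes the left hand side as a matrix entry
\[
\int_\Delta x^n \d\psi_{b,a}(x)=\big(e_b^\xi\big)^\top T^n e_a^\nu,
\]
and Proposition \ref{pro:eq:moments_discrete} does the same for the finite sum,
\[
\sum_{k=1}^{N+1}\rho_{k,b}^{[N]}\mu_{k,a}^{[N]}\big(\lambda_k^{[N]}\big)^n=\big(e_b^\xi\big)^\top \big(T^{[N]}\big)^n e_a^\nu.
\]
Proposition \ref{pro:degrees_precision} then asserts that these two scalars coincide precisely when $0\le n\le d_{b,a}(N)$, giving \eqref{eq:multiple_Gauss_quadrature} immediately.

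For optimality, I would examine the deficit $\mathcal R_n\coloneq \big(e_b^\xi\big)^\top T^n e_a^\nu - \big(e_b^\xi\big)^\top \big(T^{[N]}\big)^n e_a^\nu$ through its path-sum expansion: after expanding each matrix entry as $\sum \big(e_b^\xi\big)_{i_0}T_{i_0,i_1}\cdots T_{i_{n-1},i_n}\big(e_a^\nu\big)_{i_n}$, subtracting cancels every string that stays inside the leading block, leaving exactly the escaping strings, i.e.\ those with $\max_j i_j>N$. For $n=d_{b,a}(N)+1$, the construction in the proof of Proposition \ref{pro:degrees_precision} exhibits an explicit escaping string, namely an ascent of $r+1$ steps of size $q$ followed by a descent of $s$ steps of size $p$ down to $a-1$. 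Along this extremal string, every factor $T_{i,i+q}$ or $T_{i+p,i}$ is strictly positive because these extreme diagonals are unaffected by the shift $T\mapsto T+sI$ present in the PBF hypothesis, and the endpoint weights $\big(e_b^\xi\big)_{b-1}=\big(e_a^\nu\big)_{a-1}=1$ are positive by the tuning \eqref{eq:tunnig_initial_conditions}. Hence this string delivers a strictly positive contribution to $\mathcal R_{d_{b,a}(N)+1}$.

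The expected obstacle is to rule out cancellation from other escaping strings whose products may involve main-diagonal entries of $T$, which can be negative when the shift $s>0$ is genuinely needed. I would sidestep this by running the same analysis for $\tilde T\coloneq T+sI$, a totally nonnegative matrix by the PBF, so every contribution to its deficit $\tilde{\mathcal R}_n$ is nonnegative; the same extremal path then yields $\tilde{\mathcal R}_{d_{b,a}(N)+1}>0$. Finally I would transfer positivity from $\tilde T$ to $T$: since the shift leaves eigenvectors, hence Christoffel coefficients, unchanged while translating eigenvalues by $s$ and measures by $\d\tilde\psi_{b,a}(y)=\d\psi_{b,a}(y-s)$, a binomial expansion combined with the already-established exactness of the quadrature for $T$ up to degree $d_{b,a}(N)$ shows $\tilde{\mathcal R}_{d_{b,a}(N)+1}=\mathcal R_{d_{b,a}(N)+1}$, so the positive remainder for $T$ at the first non-exact degree follows.
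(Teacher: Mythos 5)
Your proof of the quadrature identity \eqref{eq:multiple_Gauss_quadrature} is exactly the paper's: you chain Proposition \ref{pro:spectral_moments_Weyl}, Proposition \ref{pro:eq:moments_discrete} and the degrees of precision from Proposition \ref{pro:degrees_precision}. Where you genuinely diverge is the optimality claim. The paper disposes of it in one line --- $T^n$ is oscillatory and $e^\nu_a$, $e^\xi_b$ are positive vectors, so ``all the objects involved imply positive contributions'' --- relying on the standing assumption of that section that $T$ itself admits a PBF, so every band entry of $T$ is positive and every escaping string in the path-sum contributes nonnegatively. You instead expand the deficit $\mathcal R_n$ explicitly over escaping strings, worry (correctly, if only $T+sI$ has a PBF) that main-diagonal entries of $T$ could be negative and cause cancellation, and transfer to $\tilde T=T+sI$, where all band entries are positive, recovering $\mathcal R_{d_{b,a}(N)+1}=\tilde{\mathcal R}_{d_{b,a}(N)+1}$ from the binomial expansion together with exactness up to degree $d_{b,a}(N)$. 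This is sound, and in fact more careful than the printed argument: it covers the weaker hypothesis of Theorem \ref{theorem:spectral_representation_bis} and makes explicit the no-cancellation step the paper leaves implicit. One small repair is needed: the literal string ``ascend $r+1$ times by $q$, then descend $s$ times by $p$'' need not terminate at $a-1$, since the peak $b-1+(r+1)q$ minus $sp$ generally misses $a-1$. Because every entry in the band of $\tilde T$ is strictly positive, you should instead shrink some of the jumps so that the path tops out at, say, $N+1$ and lands exactly at $a-1$; this is feasible since $r+1\leqslant N+2-b\leqslant (r+1)q$ and $s\leqslant N+2-a\leqslant sp$, and it produces an escaping string of length $d_{b,a}(N)+1$ with strictly positive factors and positive endpoint weights coming from \eqref{eq:tunnig_initial_conditions}. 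With that adjustment your argument is complete.
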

\begin{proof}
	On the one hand, from Proposition \ref{pro:spectral_moments_Weyl} we have that $ 	\big(e_b^\xi\big)^\top T^n e^\nu_a=\int_\Delta x^n\d\psi_{b,a}(x)$. On the other hand, from Proposition \ref{pro:eq:moments_discrete}, we know that
	$	\big(e_b^\xi\big)^\top \big(T^{[N]}\big)^ne^\nu_a=\sum_{k=1}^{N+1}\rho_{k,b}^{[N]}\mu_{k,a}^{[N]}\big( \lambda_k^{[N]}\big)^n$. Hence, as we have
	\begin{align*}
		\big(e_b^\xi\big)^\top T^n e^\nu_a &=	\big(e_b^\xi\big)^\top \big(T^{[N]}\big)^n e^\nu_a, & 0&\leqslant n\leqslant d_{b,a}(N), & a&\in\{1,\dots,p\}, & b&\in\{1,\dots,q\}, 
	\end{align*}
	we get \eqref{eq:multiple_Gauss_quadrature}. Notice that for $n>d_{b,a}(N)$ a positive remainder will appear and exactness will be lost. 
	Indeed, observe that $T^n$ is oscillatory and that $e_a^\nu=\Lambda\mathcal A e_a$ and $e_b^\xi=\Upsilon\mathcal B e_a$ are positive vectors, so all the objects involved imply positive contributions.
\end{proof}
\begin{rem}
	In terms of the number of nodes or interpolation points $ \mathcal N=N+1$, the zeros of the characteristic polynomial $P_{N+1}$ or, equivalently the eigenvalues of $T^{[N]}$, we have that in the non multiple case for which $a=b=p=q=1$ the degree of precision is $2\mathcal N-1$ and we recover the well known Gauss quadrature formula, see for example \cite{Chihara, Ismail}. For the non mixed multiple situation we recover the result we got in discussed in \cite[Theorem 7]{PBF_1}, that is, degrees of precision $d_a=\mathcal N-1+\deg A^{(a)}_{\mathcal N-1}$.
\end{rem}

\begin{rem}
		Notice that for the standard orthogonality, i.e. $p=q=1$, the nodes are the zeros of an orthogonal polynomial of certain degree. This also happens for the non mixed multiple situation as the characteristic polynomials and one of the families of recursion polynomials, say $B_n$, coincide. However, for mixed multiple orthogonality the nodes are the zeros of the characteristic polynomial of the corresponding truncation, which is not an orthogonal polynomial. Consequently, the nodes are not, in general, the zeros of the left or right recursion polynomials, that are the ones satisfying the mixed multiple orthogonal relations.
\end{rem}

\begin{rem}
	A quadrature is said to be interpolatory if there is a polynomial that interpolates the function for which a weighted integral is supposed to be approximated by a quadrature. In the non mixing multiple orthogonal quadrature the interpolation polynomial is $P_N=B_N$ for all the measures $\d\psi_a$, $a\in\{1,\dots,p\}$. Now, for the mixed multiple orthogonality for each $b\in\{1,\dots,q\}$, we use the interpolation polynomials $B^{(b)}_N$ for the measures $\d\psi_{b,a}$, $a\in\{1,\dots,p\}$, so that in order to have an interpolatory quadrature we need the degrees of precision to be at least $\deg B^{(b)}_N-1$, which in fact is the case.
\end{rem}

\begin{rem}
	For the case $p=q$, i.e. we are dealing with the usual matrix orthogonality, as we are working with $p\times p$ blocks we take $\mathcal N=Mp$, with $M\in\N$, the degree of precision given in Proposition \ref{pro:degrees_precision} must be the smaller degree of precision in $p\times p$ block, i.e. 
		\begin{align*}
		d(N)&= 2 \deg B^{(p)}_N+1=2\left\lceil\frac{ \mathcal N+1-p}{p}\right\rceil-1=2\left\lceil\frac{ (M-1)p+1}{p}\right\rceil-1=2(M-1)+2-1=2M-1.
	\end{align*}
This is the optimal degree of precision according to Durán and Polo \cite{Duran_Polo}.
	
\end{rem}

\section*{Acknowledgments}
AB acknowledges Centro de Matemática da Universidade de Coimbra UID/MAT/00324/2020, funded by the Portuguese Government through FCT/MEC and co-funded by the European Regional Development Fund through the Partnership Agreement PT2020.

AF acknowledges CIDMA Center for Research and Development in Mathematics and Applications (University of Aveiro) and the Portuguese Foundation for Science and Technology (FCT) within project UIDB/MAT/UID/04106/2020 and UIDP/MAT/04106/2020.

MM acknowledges Spanish ``Agencia Estatal de Investigación'' research projects [PGC2018-096504-B-C33], \emph{Ortogonalidad y Aproximación: Teoría y Aplicaciones en Física Matemática} and [PID2021- 122154NB-I00], \emph{Ortogonalidad y Aproximación con Aplicaciones en Machine Learning y Teoría de la Probabilidad}.

	\end{document}